\newtheorem{thm}{Theorem}[section]
\newtheorem{lem}[thm]{Lemma}
\newtheorem{cor}[thm]{Corollary}
\newtheorem{prop}[thm]{Proposition}
\numberwithin{equation}{section}
\theoremstyle{definition}
\newtheorem{remark}[thm]{Remark}
\newtheorem*{acknowledgement}{Acknowledgements}
\newcommand{\bN}{\mathbb{N}}
\newcommand{\bC}{\mathbb{C}}
\newcommand{\bR}{\mathbb{R}}
\newcommand{\bZ}{\mathbb{Z}}
\newcommand{\bP}{\mathbb{P}}
\newcommand{\cP}{\mathcal{P}}
\newcommand{\cI}{\mathcal{I}}
\newcommand{\norm}[1]{\left\| #1 \right\|}
\DeclareMathOperator*{\supp}{supp}
\begin{document}
\title{Near-optimal restriction estimates for Cantor sets on the parabola}
	\author{Donggeun Ryou}
	\address{Department of Mathematics, University of Rochester, Rochester, NY, USA}
	\email{dryou@ur.rochester.edu}
	\date{\today}
	\subjclass{42B10 (primary) 28A80 (secondary)}
	\keywords{Random Cantor sets, Restriction estimate, Salem set}

\begin{abstract} 
	For any $0 < \alpha <1$, we construct Cantor sets on the parabola of Hausdorff dimension $\alpha$ such that they are Salem sets and each associated measure $\nu$ satisfies the estimate $\norm{\widehat{f d\nu}}_{L^p(\bR^2)} \leq C_p \norm{f}_{L^2(\nu)}$ for all $p >6/\alpha$ and for some constant $C_p >0$ which may depend on $p$ and $\nu$. The range $p>6/\alpha$ is optimal except for the endpoint. The proof is based on the work of {\L}aba and Wang on restriction estimates for random Cantor sets and the work of Shmerkin and Suomala on Fourier decay of measures on random Cantor sets. They considered fractal subsets of $\bR^d$, while we consider fractal subsets of the parabola.
\end{abstract}
\maketitle
\section{Introduction}
Given $f : \bR^d \rightarrow \bC$, the Fourier transform of $f$ is defined by
\begin{equation*}
    \widehat{f}(\xi) = \int_{\bR^d} f(x) e^{-2\pi i x \cdot \xi} dx \qquad \forall \xi \in \bR^d.
\end{equation*} 
Let $\mu$ be a compactly supported measure on $\bR^d$. If $f$ is $\mu$-integrable, $\widehat{fd\mu}$ is defined by
\begin{equation*}
    \widehat{fd\mu}(\xi) = \int f(x) e^{-2\pi i x \cdot \xi} d\mu(x) \qquad \forall \xi \in \bR^d.
\end{equation*}
We consider a Borel probability measure $\mu$ in $\bR^d$ and assume that it satisfies the estimate
\begin{equation}\label{introest}
    \norm{\widehat{fd\mu}}_{L^p(\bR^d) } \leq C_{p} \norm{f}_{L^2(\mu)} \qquad \forall f \in L^2(\mu)
\end{equation}
for some constant $C_{p}$ which depends on $p$. It is easy to show that \eqref{introest} holds when $p=\infty$. However, we can say more if we focus on more specific measures. One typical example is a surface carried measure on the sphere or on the paraboloid. Then, \eqref{introest} holds when $p\geq 2(d+1)/(d-1)$ and the range of $p$ is sharp. This is the result of the Stein-Tomas theorem (see for example \cite{StSh11}).

Let $B(x,r)$ be a closed ball of radius $r$ centered at $x$. If we have assumptions on the Fourier decay of ${\mu}$ and the upper bound of $\mu(B(x,r))$, the following result generalizes the Stein-Tomas theorem. 
\begin{thm}\label{Mock}
    Let $\mu$ be a Borel probability measure on $\bR^d$. Assume that there exist $\alpha, \beta \in (0, d)$ and $C_1, C_2 \geq 0$ such that \begin{equation}\label{Mockcond1}
        \mu(B(x,r)) \leq C_1 r^\alpha \qquad \forall x \in \bR^d, r>0,
     \end{equation} 
     \begin{equation}\label{Mockcond2}
         |\widehat{\mu}(\xi) | \leq C_2 (1+|\xi|)^{-\beta/2} \qquad \forall \xi \in \bR^d.
    \end{equation}
     Then, the estimate \eqref{introest} holds when $p \geq (4d-4\alpha+2\beta)/\beta$ for some constant $C_p \geq 0$ which depends on $p,C_1$ and $C_2$.
\end{thm}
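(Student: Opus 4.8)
The plan is to establish \eqref{introest} by the Stein--Tomas $TT^{*}$ argument, adapted to the fractal hypotheses \eqref{Mockcond1}--\eqref{Mockcond2}. First, by duality the estimate \eqref{introest} for the extension operator $Tf = \widehat{f\,d\mu}$ is equivalent, via $TT^{*}$, to the boundedness of convolution with $\widehat{\mu}$ from $L^{p'}(\bR^{d})$ to $L^{p}(\bR^{d})$, where $p'$ is the conjugate exponent of $p$; indeed a direct computation gives $TT^{*}g = g * \widehat{\mu}$. So it suffices to prove $\norm{g * \widehat{\mu}}_{L^{p}(\bR^{d})} \le C_{p}\norm{g}_{L^{p'}(\bR^{d})}$.

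Next I would introduce a Littlewood--Paley decomposition $1 = \sum_{j\ge 0}\psi_{j}$ on the frequency side, with $\psi_{0}$ supported in $\{|\xi|\le 2\}$ and $\psi_{j} = \psi(2^{-j}\cdot)$ supported in $\{|\xi|\sim 2^{j}\}$ for $j\ge 1$, and write $\widehat{\mu} = \sum_{j\ge 0}K_{j}$ with $K_{j} = \psi_{j}\widehat{\mu}$. The heart of the matter is two estimates for the operator $g\mapsto g * K_{j}$. From the Fourier decay hypothesis \eqref{Mockcond2} one has $\norm{K_{j}}_{L^{\infty}} \lesssim C_{2}\,2^{-j\beta/2}$, so convolution with $K_{j}$ maps $L^{1}\to L^{\infty}$ with norm $\lesssim C_{2}\,2^{-j\beta/2}$. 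From the ball condition \eqref{Mockcond1} one has $\norm{\widehat{K_{j}}}_{L^{\infty}} \lesssim C_{1}\,2^{j(d-\alpha)}$: writing $\widehat{K_{j}} = \widehat{\psi_{j}} * \widetilde{\mu}$ with $\widetilde{\mu}$ the reflection of $\mu$, the function $\widehat{\psi_{j}}$ is an $L^{1}$-normalized bump of height $\sim 2^{jd}$ concentrated at scale $2^{-j}$, so splitting the convolution over the dyadic shells $\{|x-y|\sim 2^{-j+k}\}$, $k\ge 0$, and using $\mu(B(x,2^{-j+k}))\le C_{1}2^{(-j+k)\alpha}$ together with the rapid decay of $\widehat{\psi_{j}}$ yields the bound; hence convolution with $K_{j}$ maps $L^{2}\to L^{2}$ with norm $\lesssim C_{1}\,2^{j(d-\alpha)}$. (The term $j=0$ is harmless: $\norm{K_{0}}_{L^{\infty}}\le\norm{\psi_{0}}_{L^{\infty}}\le 1$ since $\mu$ is a probability measure, and $\norm{\widehat{K_{0}}}_{L^{\infty}} = \norm{\widehat{\psi_{0}}*\widetilde{\mu}}_{L^{\infty}}\lesssim 1$, so convolution with $K_{0}$ is bounded $L^{p'}\to L^{p}$ for all $p\ge 2$.)

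By Riesz--Thorin interpolation between the two bounds, convolution with $K_{j}$ maps $L^{p'}(\bR^{d})$ to $L^{p}(\bR^{d})$ with norm $\lesssim 2^{-j\beta/2}\,2^{j(2d-2\alpha+\beta)/p}$ for every $p\ge 2$; the exponent of $2^{j}$ is negative precisely when $p > (4d-4\alpha+2\beta)/\beta$, so for such $p$ the series $\sum_{j}\norm{K_{j}*\,\cdot\,}_{L^{p'}\to L^{p}}$ converges geometrically, which gives $\norm{g*\widehat{\mu}}_{L^{p}}\lesssim\norm{g}_{L^{p'}}$ and hence \eqref{introest} for every $p$ strictly larger than the stated threshold. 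For the endpoint $p = (4d-4\alpha+2\beta)/\beta$ the series is only barely divergent and cannot be summed termwise; here I would instead invoke Stein's theorem on analytic interpolation applied to a family $T_{z}g = \sum_{j}w_{j}(z)\,(K_{j}*g)$ with $w_{j}(0)\equiv 1$, chosen so that on one vertical line the family is uniformly bounded $L^{1}\to L^{\infty}$ and on another uniformly bounded $L^{2}\to L^{2}$, so that $z=0$ is the interpolation parameter corresponding to the desired exponent $p$.

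The main obstacle is precisely this endpoint. The two building-block estimates --- $L^{1}\to L^{\infty}$ with gain $2^{-j\beta/2}$ and $L^{2}\to L^{2}$ with loss $2^{j(d-\alpha)}$ --- are exactly balanced at $p = (4d-4\alpha+2\beta)/\beta$, so a naive choice such as the power weight $w_{j}(z) = 2^{-cjz}$ forces the two vertical lines together and leaves an empty range of interpolation parameters; one must choose the analytic family more carefully (or bring in an auxiliary analytic family). This is exactly the refinement, over the elementary argument that yields only the open range $p > (4d-4\alpha+2\beta)/\beta$, carried out by Bak and Seeger, and I would follow their construction for this step. Everything else is routine.
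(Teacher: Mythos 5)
The paper does not actually prove Theorem \ref{Mock}: it states it as background and attributes the open range $p > (4d-4\alpha+2\beta)/\beta$ to Mockenhaupt and Mitsis and the endpoint $p = (4d-4\alpha+2\beta)/\beta$ to Bak and Seeger, with references and no argument. So there is no in-paper proof to compare against. Judged on its own, your proposal is the standard $TT^{*}$/Littlewood--Paley argument and the open-range part is complete and correct: the identification $TT^{*}g = g * \widehat{\mu}$, the bound $\norm{K_{j}}_{\infty}\lesssim C_{2}2^{-j\beta/2}$ from \eqref{Mockcond2}, the bound $\norm{\widehat{K_{j}}}_{\infty}\lesssim C_{1}2^{j(d-\alpha)}$ from \eqref{Mockcond1} via dyadic shells and rapid decay of $\widehat{\psi_j}$, and the Riesz--Thorin interpolation at $\theta = 2/p$ giving norm $2^{j(-\beta/2 + (2d-2\alpha+\beta)/p)}$ for $K_j*$ on $L^{p'}\to L^p$ all check out, and summing the geometric series indeed yields the strict inequality $p>(4d-4\alpha+2\beta)/\beta$. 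Your diagnosis of why this does not reach the endpoint is also right: with a weight $w_j(z)=2^{-cjz}$ the admissible vertical line for the $L^1\to L^\infty$ bound is an open condition (the tail $\sum_j 2^{j(c\,\re z - \beta/2)}$ must converge), so $z=0$ can approach but never hit the critical $p$. Deferring the endpoint to Bak and Seeger's more delicate analytic-family construction is precisely what the paper itself does, so your proposal is a faithful and correct account of the theorem as the paper uses it.

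One small point worth tightening if you were to write this out in full: for the $L^2\to L^2$ bound you should make explicit that the $\psi_j$ have bounded overlap, so that by Plancherel the operator norm of convolution with $K_j$ on $L^2$ is $\norm{\widehat{K_j}}_{L^\infty}$, and the $j=0$ block must be handled separately exactly as you note. Neither affects the substance.
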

Mockenhaupt \cite{Mo00} and Mitsis \cite{Mit02} proved that \eqref{introest} holds when  $p > (4d-4\alpha+2\beta)/\beta$ and the endpoint result was proved by Bak and Seeger \cite{BS11}. If $\mu$ is a surface carried measure on the paraboloid, $\alpha=\beta=d-1$. Thus, Theorem \ref{Mock} recovers the range $p\geq 2(d+1)/(d-1)$.

Define the \textit{critical exponent $p_c$} of the measure $\mu$ by 
\begin{equation*}
    p_c=\inf\{p: \eqref{introest}\ \mathrm{holds }   \}.
\end{equation*}
Theorem \ref{Mock} means $p_c \leq (4d-4\alpha+2\beta)/\beta$. This upper bound of $p_c$ is known to be optimal when $d-1 < \beta \leq \alpha < d$ in the sense that there exists a measure $\mu$ such that it satisfies the assumptions of Theorem \ref{Mock} and $p_c = (4d-4\alpha+2\beta)/\beta$. When $d=1$, Hambrook and {\L}aba \cite{HL13} constructed a measure such that \eqref{Mockcond2} holds for any $\beta$ arbitrarily close to but smaller than $\alpha$ and $p_c = (4-2\alpha)/\alpha$. Chen \cite{Xchen16} extended this result to general $\alpha$ and $\beta$ such that $0 \leq \beta \leq \alpha \leq 1$. In higher dimensions, it was done in \cite{HL16} by Hambrook and {\L}aba.

However, there exist some measures $\mu$ such that $p_c < (4d-4\alpha+2\beta)/\beta$. In \cite{HL13}, Hambrook and {\L}aba showed that $p_c \geq 2d/\alpha_0$ if $\mu$ is supported on a set of Hausdorff dimension $\alpha_0$. Note that $0\leq \alpha, \beta \leq \alpha_0$. If \eqref{Mockcond1} and \eqref{Mockcond2} hold for values arbitrarily close to $\alpha_0$, the support of $\mu$ is called a \textit{Salem set}. If $0 < \alpha_0 < d$, we have
\begin{equation*}
    \frac{2d}{\alpha_0} < \frac{4d-2\alpha_0}{\alpha_0} \leq \frac{4d-4\alpha+2\beta}{\beta}.
\end{equation*}
Thus, $2d/\alpha_0$ is smaller than $(4d-4\alpha+2\beta)/\beta$ even when $\mu$ is supported on a Salem set. Examples such that $p_c = 2d/\alpha_0$ were provided by Chen \cite{Xchen14}, Chen and Seeger \cite{CS17}, Shmerkin and Suomala \cite{ShSu18} (see also \cite{ShSu17})  and {\L}aba and Wang \cite{LW18}. Therefore, the lower bound of $p_c$ is optimal for all $0 <\alpha_0 < d$ in the sense that there exists a measure $\mu$ whose $p_c = 2d/\alpha_0$ for each $\alpha_0$.

In \cite{Xchen14} and \cite{CS17}, Chen and Seeger constructed measures supported on a Salem set of Hausdorff dimension $\alpha_0$ such that $p_c = 2d/\alpha_0$ where $\alpha_0 = d/k$ and $k$ is an integer. They considered $k$-fold self-convolution of $\mu$. Shmerkin and Suomala \cite{ShSu18} constructed the example through random fractal measures. Their result covers when $d=1$ and $1/2<\alpha_0<1 $. {\L}aba and Wang \cite{LW18} constructed measures of Hausdorff dimension $\alpha_0$ such that \eqref{Mockcond2} holds for $\beta < \min ({\alpha_0/2 ,1})$ and $p_c=2d/\alpha_0$. They used $\Lambda(p)$-set and decoupling. Their construction works for all $\alpha_0$ such that $0< \alpha_0 < d$. However, in their result, the estimate \eqref{introest} only holds when $p > 2d/\alpha_0$, while the results of Chen, Chen-Seeger, and Shmerkin-Suomala work even at the endpoint. Thus, it is still open whether, for any $0 <\alpha_0 < d$, there exists a measure $\mu$ such that its support has Hausdorff dimension $\alpha_0$ and \eqref{introest} holds for $p = 2d/\alpha_0$.

We also mention that all the results are related to a question raised by Mitsis \cite{Mi02} (see also Mattila \cite{Ma04}). If $\alpha$ is not an integer such that $0<\alpha<d$, Mitsis asked if there is a compactly supported measure $\mu$ such that
\begin{equation*}
    C_1 r^\alpha \leq \mu(B(x,r)) \leq C_2 r^\alpha \qquad \forall x \in \supp(\mu), \forall r >0
\end{equation*}
for some constants $C_1, C_2>0$ and 
\begin{equation*}
    |\widehat{\mu}(\xi)| \leq |\xi|^{-\alpha/2} \qquad \forall \xi \in \bR^d.
\end{equation*}
A related result can also be found in \cite{Kor11}. However, there is no known result yet, including this paper, since we have $|\widehat{\mu}(\xi)| \leq C_{\alpha,\epsilon} |\xi|^{-\alpha/2+\epsilon}$ at best where $C_{\alpha,\epsilon}$ is a constant depends on $\alpha$ and $\epsilon$.

Throughout the paper, we denote by $X \lesssim Y$ when $X \leq CY$ for some constant $C>0$ and we write $X \approx Y$ to denote that $X \lesssim Y$ and $Y \lesssim X$. If the constant $C$ depends on a parameter such as $\epsilon$, we write $X(\epsilon) \lesssim_\epsilon Y(\epsilon)$ instead of $X(\epsilon) \leq C(\epsilon) Y(\epsilon)$ where the constant $C(\epsilon)$ depends on $\epsilon$. Depending on the context, $|\cdot |$ denotes the $\ell^2$ norm of a vector in $\bR^d$, the cardinality of a finite set or the $d$-dimensional Lebesgue measure of a subset in $\bR^d$. We abbreviate $e^{2\pi i x}$ to $e(x)$. 

Now, let us turn to our setting. Earlier works constructed fractal measures on $\bR^d$, but we consider fractal measures on the parabola $\bP^{1}:=\{(x,x^2 ) : 0 \leq x \leq 1 \}$.
\begin{thm}\label{M1}
    Let $0<\alpha<1 $. There exists a Borel probability measure $\nu$ supported on a subset of $\bP^1$ which satisfies the following. 
    \begin{itemize}
        \item[(1)] The support of the measure $\nu$ has Hausdorff dimension $\alpha$.
        \item[(2)] For any $0 < \epsilon<\alpha $, we have 
    \begin{equation}\label{M1eq1}
        \nu (B(x,r)) \lesssim_{\alpha,\epsilon} r^{\alpha-\epsilon} \qquad \forall x \in \bR^2, \forall r>0.
    \end{equation}
        \item[(3)] For any $\epsilon >0$, 
    \begin{equation}\label{M1eq2}
        |\widehat{\nu}(\xi)| \lesssim_{\alpha,\epsilon} (1+|\xi|)^{-\alpha/2+\epsilon} \qquad \forall \xi \in \bR^2. 
    \end{equation}
        \item[(4)]For every $p > 6/\alpha$, we have the estimate
    \begin{equation}\label{M1eq3}
        \norm{\widehat{fd \nu}}_{L^p(\bR^2)} \lesssim_p \norm{f}_{L^2(\nu)} \qquad \forall f \in L^2(\nu).
    \end{equation}
    Equivalently, for any $1\leq q < 6/(6-\alpha)$ we have
    \begin{equation*}
        \norm{\widehat{f}}_{L^2(\nu)} \lesssim_{q} \norm{f}_{L^q(\bR^2)} \qquad \forall f \in L^q(\bR^2).
    \end{equation*}
    \end{itemize}
\end{thm}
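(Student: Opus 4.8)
The construction will follow the Shmerkin–Suomala random Cantor set framework, adapted so that the Cantor set lives on the parabola $\bP^1$ rather than in $\bR^2$. Concretely, I would parametrize the parabola by $x \in [0,1]$ and build a random Cantor set $E \subset [0,1]$ in the base variable by the usual iterated random subdivision: at stage $n$, each surviving interval of length roughly $M^{-n}$ is subdivided into $M$ children, each of which is kept independently (or by a suitable correlated rule) with probability $M^{\alpha-1}$, so that the expected number of surviving intervals at scale $M^{-n}$ is $M^{n\alpha}$. The set on the parabola is then $\Gamma(E) = \{(x,x^2): x \in E\}$ and $\nu$ is the pushforward under $\Gamma$ of the natural Cantor measure on $E$. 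Parts (1) and (2) are then essentially the standard Hausdorff dimension / Frostman bounds for Shmerkin–Suomala Cantor sets, transported through the bi-Lipschitz map $\Gamma$; this requires only that $\Gamma$ and $\Gamma^{-1}$ distort balls by bounded factors, which holds since $x\mapsto x^2$ is $C^1$ with bounded derivative on $[0,1]$.

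**Fourier decay, part (3).** For \eqref{M1eq2} I would invoke the Shmerkin–Suomala machinery for Fourier decay of random measures, but here the oscillatory integral defining $\widehat{\nu}(\xi)$, with $\xi = (\xi_1,\xi_2)$, is $\int e(-\xi_1 x - \xi_2 x^2)\,d\mu(x)$, so the phase is quadratic rather than linear. One writes $\xi_1 x + \xi_2 x^2 = \xi_2(x + \xi_1/(2\xi_2))^2 - \xi_1^2/(4\xi_2)$ and, after discarding the unimodular constant, is left with estimating $\int e(\xi_2 (x-a)^2)\,d\mu(x)$ for the effective frequency $\xi_2$; one then runs the martingale/large-deviation argument of Shmerkin–Suomala at the dyadic scale comparable to $|\xi_2|$, while the regime $|\xi_2| \ll |\xi_1|$ (so the phase is essentially linear in $x$ with frequency $\sim \xi_1$) is handled by the one-dimensional linear estimate. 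In both regimes the random cancellation gives decay $(1+|\xi|)^{-\alpha/2+\epsilon}$, and combining them yields Salemness; because $|\widehat\nu(\xi)| \le 1$ trivially this also controls $|\xi|$ small. I expect the bookkeeping to mirror \cite{ShSu18} closely once the quadratic phase is reduced to a shifted-square phase.

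**The restriction estimate, part (4).** This is the heart of the matter and where I expect the main obstacle. Following the {\L}aba–Wang strategy, \eqref{M1eq3} for $p > 6/\alpha$ should be deduced by interpolating (i) the trivial $L^\infty$ bound $\norm{\widehat{fd\nu}}_\infty \le \norm{f}_{L^1(\nu)} \le \norm{f}_{L^2(\nu)}$ with (ii) an $L^{6/\alpha + }$-type bound obtained from a decoupling inequality for the parabola. The key extra ingredient compared to $\bR^d$ is that the Cantor set already sits on $\bP^1$, so one can exploit $\ell^2 L^6$ decoupling for the parabola (Bourgain–Demeter) at scale $\delta = M^{-n}$: the surviving $\sim M^{n\alpha}$ intervals give $\sim M^{n\alpha}$ caps on the parabola, and decoupling bounds the $L^6$ norm of $\widehat{fd\nu}$ by $(M^{n\alpha})^{1/2+\epsilon}$ times the $\ell^2$ sum of the single-cap pieces. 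One then needs a $\Lambda(p)$-type / random-restriction estimate at the level of a single generation — i.e. that the surviving caps form a "spread" family for which the single-scale restriction constant is $O(M^{n\epsilon})$ rather than $O((M^{n\alpha})^{1/2})$ — and this is exactly where the randomness of the Cantor set is used: a Córdoba-style $L^4$ or $\Lambda(p)$ argument combined with the independence in the construction shows that, with high probability over the random set, the relevant exponential sums exhibit square-root cancellation uniformly in $n$. Iterating the single-scale estimate over the generations (a telescoping/induction-on-scales argument, as in {\L}aba–Wang) and optimizing gives \eqref{M1eq3} for every $p > 6/\alpha$, with the endpoint lost exactly as in {\L}aba–Wang because the induction accumulates $M^{n\epsilon}$ factors. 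The final equivalent formulation is the standard duality restatement of \eqref{M1eq3}. The main obstacle, then, is setting up the single-scale restriction/$\Lambda(p)$ estimate on the parabola with constants independent of the generation $n$, and verifying that the random construction can be tuned (choice of $M$, survival probabilities, and possibly a finite-dependence condition between generations) so that decoupling on $\bP^1$ plus independence together yield the near-optimal exponent $6/\alpha$ rather than the Stein–Tomas-type exponent one would get from Theorem \ref{Mock}.
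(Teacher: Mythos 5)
Your plan for parts (1)--(3) is in the right spirit and close to what the paper does: the Cantor set is built in the base variable $x\in[0,1]$ and pushed forward to $\bP^1$, parts (1)--(2) follow from Frostman-type bounds transported through the parametrization, and part (3) is a Shmerkin--Suomala/Kaufman-style probabilistic argument (Hoeffding plus van der Corput) in which one buckets the level-$j$ intervals by their distance to the stationary point of the phase $\xi_1 x+\xi_2 x^2$; your completing-the-square reduction is essentially what happens in Lemmas \ref{FDlemLarge} and \ref{FDlemmid}, although the mid-range $M_{j+1}\le|\xi|\le m_{j+1}M_j^2$ needs both Hoeffding and van der Corput simultaneously, not one or the other.

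For part (4), however, there is a genuine gap, and your outline omits the paper's central technical idea. First, Bourgain--Demeter $\ell^2L^6$ decoupling for the full parabola combined with a vague ``single-scale $\Lambda(p)$ / random restriction bound'' does not as sketched produce the exponent $6/\alpha$ (and your arithmetic for the decoupling loss is off: $\ell^2L^6$ decoupling costs $\delta^{-\epsilon}$, not $(\#\mathrm{caps})^{1/2}$). The exponent $6/\alpha>6$ comes from a \emph{fractal} decoupling inequality $D_{6/\alpha}(P_j)\lesssim_\epsilon N_j^\epsilon$ (Proposition \ref{DecP_k}), and the paper obtains it by building each generation of the Cantor set out of $\Lambda(2/\alpha)$-sets via Bourgain's theorem (Theorem \ref{Lambdap}), deriving decoupling for $E_j\subset[0,1]$ (Lemma \ref{DecE_j,i}), and lifting it to $P_j$ through the bilinear induction-on-scales of \cite{CDGJLM22} (Lemmas \ref{DecAM}--\ref{Decsplit}). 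Your generic ``keep each child with probability $M^{\alpha-1}$'' subdivision does not automatically yield $\Lambda(2/\alpha)$-sets at each scale, so the $\Lambda(p)$ structure must be built into the construction rather than invoked afterwards. Second, even granting the decoupling, the passage to the local restriction estimate $\norm{\widehat{fd\nu}}_{L^{6/\alpha}(B^2(R))}\lesssim_\epsilon R^\epsilon\norm{f}_{L^2(\nu)}$ runs precisely into the dual-box obstruction flagged in the paper's outline: the Fourier supports of the cap pieces are $O(N^{-1/2})\times O(N^{-1})$ parallelograms whose long directions are all different, so they do not tile a common dual cube and the Łaba--Wang wave-packet bookkeeping breaks. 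The paper resolves this with a mixed-norm $L^q_uL^2_z$ interpolation (Benedek--Panzone) inside Lemma \ref{lemLR}, an ingredient entirely absent from your proposal. Finally, the global estimate \eqref{M1eq3} is obtained from the local one via Tao's $\epsilon$-removal (Lemma \ref{lemGLTao} together with the Fourier decay), not by ``interpolating with the trivial $L^\infty$ bound.'' These are not cosmetic differences: without the $\Lambda(2/\alpha)$ construction, the CDGJLM lifting, and the mixed-norm step, the argument as written does not yield the claimed exponent.
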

Note that the support of $\nu$ is a Salem set and it is a subset of the parabola, which is also a Salem set. Theorem \ref{M1} is sharp except for the endpoint in the following sense. 
\begin{thm} \label{M2}
    Let $\bP^{d-1}:= \{(x, |x|^2 : x \in [0,1]^{d-1} \}$ and $0 < \alpha < d-1$. Assume that $\nu$ is a Borel probability measure supported on a subset of $\bP^{d-1}$ whose Hausdorff dimension is $\alpha$. Assume that
    \begin{equation}\label{M2eq1}
        \nu(B(x,r)) \lesssim_{\alpha} r^{\alpha} \qquad \forall x \in \bP^{d-1}, \forall r >0.
    \end{equation}
    Then, the estimate 
    \begin{equation}\label{M2eq2}
        \norm{\widehat{fd\nu} }_{L^p(\bR^2)} \lesssim_{p,q} \norm{f}_{L^q(\nu)} \qquad \forall f \in L^q(\nu)
    \end{equation}
    cannot hold if $p < {q'(d+1)}/{\alpha}$ where $1/q+1/q'=1$. 
\end{thm}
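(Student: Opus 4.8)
The plan is to prove Theorem \ref{M2} by a standard Knapp-type example, exploiting the curvature of $\bP^{d-1}$ together with the measure condition \eqref{M2eq1}. The key observation is that if $\nu$ puts mass $\gtrsim \delta^{\alpha}$ on a ball of radius $\delta$ that meets $\bP^{d-1}$ (which, by \eqref{M2eq1} and the fact that $\nu$ is a probability measure with support of dimension $\alpha$, happens for a suitable sequence $\delta \to 0$), then the piece of the parabolic cap over an $x$-interval of length $\approx \delta$ is contained in a rectangular box (a ``slab'') of dimensions roughly $\delta \times \delta \times \cdots \times \delta \times \delta^2$ — that is, a tube of length $\delta$ in the $d-1$ tangential directions and thickness $\delta^2$ in the direction of the last coordinate. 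On the dual box, of dimensions $\delta^{-1} \times \cdots \times \delta^{-1} \times \delta^{-2}$ and hence volume $\approx \delta^{-(d+1)}$, the exponential $e(-x\cdot\xi)$ is essentially constant, so $|\widehat{f d\nu}(\xi)| \gtrsim \int |f| \, d\nu$ on that entire dual box.

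First I would fix a small $\delta$ for which there is a point $x_0 \in \supp(\nu)$ and $r = \delta$ with $\nu(B(x_0,\delta)) \gtrsim \delta^{\alpha}$; such $\delta$ exist along a sequence tending to $0$ because otherwise $\nu(B(x,r)) = o(r^{\alpha})$ uniformly, which combined with a Vitali covering argument would force the support to have dimension strictly greater than $\alpha$ (or force $\nu \equiv 0$), contradicting the hypothesis. Next I would take $f$ to be the indicator (or a smooth bump) of $B(x_0,\delta)$. Then on the one hand $\|f\|_{L^q(\nu)} = \nu(B(x_0,\delta))^{1/q} \lesssim \delta^{\alpha/q}$, and more importantly we have the \emph{lower} bound $\nu(B(x_0,\delta)) \gtrsim \delta^{\alpha}$ from our choice of $\delta$; on the other hand, for $\xi$ in the dual box $R^*$ described above (centered at the origin, adapted to the tangent plane of $\bP^{d-1}$ at $x_0$), the phase $x \cdot \xi$ varies by $O(1)$ as $x$ ranges over $\supp(f) \cap \bP^{d-1}$, so $|\widehat{f d\nu}(\xi)| \gtrsim \nu(B(x_0,\delta)) \gtrsim \delta^{\alpha}$ for all $\xi \in R^*$. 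Integrating,
\begin{equation*}
    \|\widehat{f d\nu}\|_{L^p(\bR^2)} \gtrsim \delta^{\alpha} |R^*|^{1/p} \approx \delta^{\alpha} \delta^{-(d+1)/p}.
\end{equation*}

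Comparing with $\|f\|_{L^q(\nu)}^{\phantom{1}} \approx \nu(B(x_0,\delta))^{1/q}$, and using $\nu(B(x_0,\delta)) \approx \delta^{\alpha}$ (upper bound from \eqref{M2eq1}, lower bound from the choice of $\delta$), inequality \eqref{M2eq2} would give $\delta^{\alpha - (d+1)/p} \lesssim_{p,q} \delta^{\alpha/q}$, i.e. $\delta^{\alpha(1 - 1/q) - (d+1)/p} \lesssim_{p,q} 1$, that is $\delta^{\alpha/q' - (d+1)/p} \lesssim_{p,q} 1$. Letting $\delta \to 0$ along the chosen sequence forces $\alpha/q' - (d+1)/p \geq 0$, i.e. $p \geq q'(d+1)/\alpha$. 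Hence \eqref{M2eq2} cannot hold for $p < q'(d+1)/\alpha$, which is the claim. The main obstacle — and the only place requiring genuine care rather than bookkeeping — is justifying the existence of the sequence of scales $\delta$ with $\nu(B(x_0,\delta)) \gtrsim \delta^{\alpha}$: one must rule out that the measure is ``super-$\alpha$-regular'' everywhere, which follows from the mass distribution principle / Frostman-type lower bound for Hausdorff dimension, since $\dim_H \supp(\nu) = \alpha$ means $\nu$ cannot satisfy $\nu(B(x,r)) \lesssim r^{\alpha'}$ for any $\alpha' > \alpha$. A secondary technical point is the precise geometry of the parabolic cap and its dual box, which is the classical Knapp computation and only uses that the second fundamental form of $\bP^{d-1}$ is nondegenerate and bounded; this contributes the exponent $d+1 = (d-1) + 2$ in the volume of $R^*$.
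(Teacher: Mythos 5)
Your Knapp-type construction is the same one the paper uses, and the bookkeeping on the dual box is correct: a ball of radius $\delta$ around a point of $\bP^{d-1}$ carries a parabolic cap contained in a slab of dimensions roughly $\delta\times\cdots\times\delta\times\delta^2$, whose dual box has measure $\approx\delta^{-(d+1)}$, on which the phase is essentially constant after tilting to the tangent plane. That part is fine.

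The gap is in the lower bound for $\nu(B(x_0,\delta))$. You assert that there is a sequence $\delta\to0$ with $\nu(B(x_0,\delta))\gtrsim\delta^{\alpha}$, and you justify this by saying that otherwise $\nu(B(x,r))=o(r^\alpha)$ uniformly, ``which combined with a Vitali covering argument would force the support to have dimension strictly greater than $\alpha$.'' This implication is false. The mass distribution principle only gives $\dim_H\supp\nu\geq s$ from a bound of the form $\nu(B(x,r))\leq Cr^{s}$ with a \emph{fixed} exponent $s$; it says nothing when $\nu(B(x,r))$ is merely $o(r^{\alpha})$. For instance, a measure with $\nu(B(x,r))\approx r^{\alpha}/\log(1/r)$ satisfies $\nu(B(x,r))=o(r^{\alpha})$ yet can easily have support of Hausdorff dimension exactly $\alpha$, since $r^{\alpha}/\log(1/r)$ does not sit below $r^{\alpha+\epsilon}$ for any $\epsilon>0$. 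So your claim, with the exponent exactly $\alpha$, is not justified, and the dichotomy you invoke does not hold.

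What \emph{is} true, and what the paper uses, is the weaker statement: for every $\epsilon>0$ there is a sequence $r_i\to0$ and points $a_i$ with $\nu(B(a_i,r_i))>r_i^{\alpha+\epsilon}$. This follows from Frostman's lemma: if no such sequence existed, one could rescale $\nu$ to a nonzero measure with $\nu(B(x,r))\leq r^{\alpha+\epsilon}$ for \emph{all} $r>0$, forcing $\dim_H\supp\nu\geq\alpha+\epsilon$, a contradiction. Running your Knapp computation with the lower bound $\delta^{\alpha+\epsilon}$ in place of $\delta^{\alpha}$ (the upper bound $\|f\|_{L^q(\nu)}\lesssim\delta^{\alpha/q}$ is unaffected, since it only uses \eqref{M2eq1}) gives $\delta^{\alpha+\epsilon-(d+1)/p}\lesssim\delta^{\alpha/q}$, hence $(d+1)/p\leq\alpha/q'+\epsilon$. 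Letting $\epsilon\to0$ at the very end recovers $p\geq q'(d+1)/\alpha$. With this correction your argument is complete and is essentially identical to the paper's proof.
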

When $d=2$ and $q=2$, Theorem \ref{M2} implies that $p_c \geq 6/\alpha$. Since $\nu$ is on the parabola, the lower bound of $p_c$ increased from $4/\alpha$ to $6/\alpha$ compared with the result of Hambrook and {\L}aba \cite{HL13}. Also, since $0 < \alpha <1$, we have $6/\alpha < (8-2\alpha)/\alpha$. Thus, it is smaller than the upper bound of $p_c$ obtained from Theorem \ref{Mock}.
\subsection{Outline of the paper} The proof was inspired by the work of {\L}aba and Wang \cite{LW18}, but we cannot apply it directly. 
The main obstacle is that we cannot make use of dual cubes as in \cite{LW18}. For example, they considered a Cantor set which can be covered by $R^{\alpha+\epsilon}$ number of cubes $Q$ of side length $R^{-1}$ for any $R>0$. Let $\psi_Q = \psi(R(x-x_Q))$  where $\psi$ is a compactly supported Schwartz function and $x_Q$ is the center of $Q$. For any $Q$, $\widehat{\psi_Q}$ is essentially supported on a cube of side length $O(R)$ centered at the origin.

Now, let us consider a subset of the parabola as follows. First, consider a Cantor set on $[0,1]$ which can be covered by $R^{\alpha+\epsilon}$ number of intervals $I$ of length $R^{-1/2}$. Then, we can consider a parallelogram $\Omega_I$ of sides $O(R^{-1/2})$ and $O(R^{-1})$ which contains the subset of the parabola above $I$, i.e. $\{ (x,x^2): x\in I \} \subseteq \Omega_I$. Let $\psi_{\Omega_I}$ be a Schwartz function supported on $\Omega_I$. Specifically, 
\begin{equation*}
    \psi_{\Omega_I} = \psi(R^{1/2}(x_1-x_{I}), R(x_2-(2x_{I}+R^{-1/2})(x_1-x_{I})-x_{I}^2))
\end{equation*}
where $x_{I}$ is the center of $I$. Then, $\widehat{\psi_{\Omega_I}}$ is essentially supported on a parallelogram of sides $O(R^{1/2})$ and $O(R)$. However, the directions of long sides of them are all different, so that they overlap in a small cube of side $O(R^{1/2})$, while the union of these parallelograms is a cube of side $O(R)$. Thus, we need new arguments in addition to the modifications of the proof in \cite{LW18}.

In Section \ref{secCan}, we construct a Cantor set on $[0,1]$ by using $\Lambda(p)$-sets and consider the associated measure which is supported on a subset of parabola above the Cantor set. Let $\nu$ be the measure constructed in Section \ref{secCan}. Then $\nu$ is supported on a set of Hausdorff dimension $\alpha$ and satisfies \eqref{M1eq1}. In Section \ref{secDec}, we obtain the decoupling inequality for the support of $\nu$. We use $\Lambda(p)$-sets and the result of \cite{CDGJLM22}. In Section \ref{secLR}, we obtain the local restriction estimate for $\nu$, which is an analogue of Corollary 2 in \cite{LW18}. However, we use the mixed norm interpolation \cite{BP61} in addition to the argument in \cite{LW18}. In Section \ref{secFD}, we obtain the Fourier decay of $\nu$. We only need a negative exponent to derive \eqref{M1eq3}, but it turned out that almost surely $\nu$ has the Fourier decay arbitrary close to the optimal one, as in \eqref{M1eq2}. We use the arguments in \cite{Kauf76} and \cite{ShSu17} in our setting. The local restriction estimate and the Fourier decay lead to the global restriction estimate in Section \ref{secGR}, which is \eqref{M1eq3}. Tao's epsilon removal lemma \cite{Tao99} is used as in \cite{LW18}, but we simplify the proof of Lemma 9 in \cite{LW18} in order to avoid the problem we described above. Then, we prove that Theorem \ref{M1} happens almost surely if $\nu$ is a random measure constructed in Section \ref{secGR}. Lastly, we prove Theorem \ref{M2} in Section \ref{secSharp}.

\begin{acknowledgement}
The author would like to thank his advisor Alex Iosevich for many discussions of this work and encouragement. The author would also like to thank Shaoming Guo, Zane Kun Li for helpful conversations and thank Kyle Hambrook for the reference \cite{Kauf76}. The author is grateful to the anonymous referees for many valuable comments and suggestions.
\end{acknowledgement}

\section{The construction of a Cantor set}\label{secCan}
Let $\{n_j\}_{j \in \bN \cup \{ 0 \} }$ be a sequence of positive integers, and let $N_j = n_0\cdots n_j$. We assume the following conditions on $n_j$.
\begin{equation}\label{nvcond1}
    n_0=1 \qquad \mathrm{and} \qquad 2\leq n_1 \leq n_2 \leq \cdots \leq n_j \leq \cdots ,
\end{equation}
\begin{equation}\label{nvcond2}
    \forall a >0, a^j \lesssim_a N_j,
\end{equation}
\begin{equation}\label{nvcond3}
    \forall \epsilon >0 \ \mathrm{and} \ \forall j \in \bN, n_{j+1} \lesssim_{\epsilon} N_j^\epsilon,
\end{equation}
\begin{equation}\label{nvcond4}
    \exists B \ \mathrm{such \ that } \ \forall j \in \bN, N_{2j}^{1/2} N_j^{-1} \leq B^j.
\end{equation}
For example, if $n_j \approx j $, all conditions from \eqref{nvcond1} to \eqref{nvcond4} are satisfied by Stirling's formula (see \cite[p. 98]{Dur19}). The condition \eqref{nvcond4} was not assumed in \cite{LW18}, but it is necessary in Sections \ref{secDec} and \ref{secLR}. We need the following theorem in order to use $\Lambda(p)$-sets.
\begin{thm}[Existence of $\Lambda(p)$-sets] \label{Lambdap} Let $p>2$. For every $N \in \bN$, there exists a set $S \subset \{0,1,2, \cdots, N-1\}$ such that $N^{2/p} \leq |S| < N^{2/p}+1 $ and
\begin{equation}\label{Lambdapeq}
    \norm{\sum_{a\in S}c_a e(ax)}_{L^p([0,1])} \leq C_p  \left( \sum_{a\in S} |c_a|^2\right)^{1/2} \qquad \forall \{ c_a\}_{a \in S} \in \ell^2
\end{equation}
for some constant $C_p > 0$ which depends only on $p$, but not on $N$.
\end{thm}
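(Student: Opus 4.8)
The plan is to establish the statement by the probabilistic method, in the spirit of Bourgain's solution of the $\Lambda(p)$-set problem. Fix $p>2$ and put $\delta:=\min(2N^{2/p-1},1)$. Let $\xi_0,\dots,\xi_{N-1}$ be independent $\{0,1\}$-valued random variables with $\bE\xi_a=\delta$, let $S=\{a:\xi_a=1\}$, and let $T_\xi$ denote the linear map $c\mapsto\sum_a\xi_a c_a\,e(ax)$. Then $\bE|S|=N\delta\approx 2N^{2/p}$, and the Chernoff bound gives $|S|\ge N^{2/p}$ with probability $1-o(1)$ as $N\to\infty$. The heart of the matter is the uniform estimate
\begin{equation*}
  \bE\Big[\,\sup_{\norm{c}_{\ell^2}\le 1}\norm{T_\xi c}_{L^p([0,1])}\,\Big]\ \le\ C_p ,
\end{equation*}
with $C_p$ depending only on $p$. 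Granting this, Markov's inequality shows that with probability $\ge 3/4$ the random set $S$ is $\Lambda(p)$ with constant $4C_p$; intersecting with the event $|S|\ge N^{2/p}$, for all large $N$ there is a realization that is simultaneously $\Lambda(p)$ with constant $4C_p$ and of cardinality $\ge N^{2/p}$. Since a subset of a $\Lambda(p)$-set is again $\Lambda(p)$ with no larger constant, I would then pass to a subset of cardinality exactly $\lceil N^{2/p}\rceil\in[N^{2/p},N^{2/p}+1)$, and absorb the finitely many small $N$ into $C_p$ by hand.

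To prove the displayed estimate I would reduce it to a Gaussian process. By the standard symmetrization inequality followed by the domination of Rademacher averages by Gaussian averages in a Banach space, it suffices to bound $\bE_\xi\bE_g\sup_{\norm{c}_{\ell^2}\le1}\norm{\sum_a g_a\xi_a c_a\,e(a\cdot)}_{L^p}$ (with the $g_a$ i.i.d.\ standard Gaussians), up to a harmless deterministic error $\delta\sup_{\norm{c}_{\ell^2}\le1}\norm{\sum_a c_a e(a\cdot)}_{L^p}\le\delta N^{1/2-1/p}=o(1)$. Writing $\norm{h}_{L^p}=\sup_{\norm{w}_{L^{p'}}\le1}|\langle h,w\rangle|$ and expanding in Fourier coefficients, the inner quantity is the Gaussian width $\bE_g\sup_{t\in T_\xi}\langle g,t\rangle$ of the set $T_\xi=\{(c_a\widehat w(a))_{a\in S}:\norm{c}_{\ell^2}\le1,\ \norm{w}_{L^{p'}}\le1\}\subset\bR^{|S|}$, whose $\ell^2$-diameter is at most $2$ since $\norm{c\cdot\widehat w}_{\ell^2}\le\norm{c}_{\ell^2}\norm{\widehat w}_{\ell^\infty}\le\norm{w}_{L^1}\le\norm{w}_{L^{p'}}$. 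By Dudley's entropy bound this width is $\lesssim\int_0^2\big(\log N(T_\xi,\norm{\cdot}_{\ell^2},u)\big)^{1/2}\,du$, so the problem reduces to estimating $\bE_\xi$ of this metric-entropy integral.

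The entropy estimate is the crux and the expected main obstacle. Cheap inputs — for instance Rudin's $L^{2k}$-bound $\norm{\sum_{a\in S}c_a e(a\cdot)}_{L^{2k}}\lesssim_k R_k(S)^{1/2k}\norm{c}_{\ell^2}$, where $R_k(S)$ counts representations of an integer as an ordered sum of $k$ elements of $S$ — only give, after a union bound, a $\Lambda(p)$-constant growing with $N$; removing this growth is precisely what must be done. Following Bourgain, one would control the covering numbers of $T_\xi$ by decomposing each competitor $w$ into its dyadic level sets, estimating the effect of the random selectors on each level, and inserting the representation-counting bounds at the scale where they are favorable; this produces an inequality for $A:=\bE[\sup_{\norm{c}_{\ell^2}\le1}\norm{T_\xi c}_{L^p}]$ that is affine in $A$ with a small coefficient on the right, which can then be closed to give $A\lesssim_p1$. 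The delicate, self-referential nature of this entropy bound — one must control the covering numbers of $T_\xi$ without already knowing the $\Lambda(p)$-constant — is where the real work lies; the remaining steps are soft. If one prefers not to reproduce this argument, Theorem \ref{Lambdap} may simply be quoted from Bourgain's paper on bounded orthogonal systems and the $\Lambda(p)$-set problem.
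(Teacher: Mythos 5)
The paper does not prove this theorem at all: it is stated as a black box and the surrounding text simply cites Bourgain \cite{Bour89} for the original proof and Talagrand \cite{Tal95,Tal21} for a simpler one. So there is no "paper's proof" to compare against, and your closing suggestion to simply quote Bourgain is exactly what the paper does.

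As a self-contained argument, your proposal has a genuine gap precisely where you say it does. The soft steps (the selector model with $\delta\approx 2N^{2/p-1}$, the Chernoff bound on $|S|$, passing to a subset of the right size, symmetrization, Gaussian domination, absorbing the deterministic $\delta N^{1/2-1/p}=o(1)$ term, the diameter bound $\|\widehat w\|_{\ell^\infty}\le\|w\|_{L^1}\le\|w\|_{L^{p'}}$) are all correctly set up. But the entire content of the theorem is the entropy estimate for $T_\xi$, and that is only described, not proved: "decompose $w$ into dyadic level sets, estimate the effect of the selectors on each level, insert representation-counting bounds where favorable, close a self-referential affine inequality" names the shape of Bourgain's argument without supplying any of it. You correctly flag this as the crux, but a proof cannot end by deferring its crux. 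One further caveat worth recording: your reduction routes through Dudley's entropy integral, whereas Bourgain's original argument does not pass through Dudley and Talagrand's streamlined proof uses majorizing measures/generic chaining (the $\gamma_2$ functional) rather than Dudley precisely because Dudley is in general lossy; it is not obvious that the Dudley route closes, so if you were to carry this out you would likely need to replace that step. Given all this, the honest conclusion is that your proposal is an accurate sketch of the known approach but not a proof; the paper's choice to cite rather than prove is the appropriate one, and your final sentence should be promoted from an afterthought to the actual argument.
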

Theorem \ref{Lambdap} was first proved by Bourgain \cite{Bour89} and another proof was given by Talagrand \cite{Tal95}, see also \cite[Section 19.3]{Tal21} for simpler proof.

Let $0 < \alpha <1$ and for each $j \in \bN$, let $\{t_j\}_{\bN}$ be a sequence of integers such that
\begin{equation*}
    n_j^{\alpha/2} \leq t_j < n_j^{\alpha/2}+1.  
\end{equation*}
Define
\begin{equation*}
    \Sigma_j := \{ S \subset [0, n_j^{1/2}) \cap \bZ : |S|=t_j \ \mathrm{and}\  \eqref{Lambdapeq} \ \mathrm{holds \ for \ } p=2/\alpha,  N = n_j^{1/2} \}.
\end{equation*}
The set $\Sigma_j$ is non-empty because of Theorem \ref{Lambdap}.

Let $E_0 = [0,1]$ and $A_0 = \{0\}$. For each $ a \in A_j$, choose a set $S_{j+1,a} \in \Sigma_{j+1}$ and let
\begin{equation*}
    A_{j+1,a}= n_{j+1}^{1/2}a+ S_{j+1,a} \qquad A_{j+1} = \cup_{a \in A_j} A_{j+1,a} \qquad E_{j+1} = N_{j+1}^{-1/2} ( A_{j+1} + [0, 1]).
\end{equation*}
The set $A_{j}$ is a subset of $\{ 0,1,2, \cdots, N_j^{1/2}-1 \}$ and $ [0,1] \supseteq E_1 \supseteq \cdots \supseteq E_j \supseteq E_{j+1} $. Let $E_\infty :=  \cap_{j=1}^\infty E_j $. Similarly, let $P_j := \{(x_1,x_1^2) : x_1\in E_j \}$ and $P_\infty :=\cap_{j=1}^\infty P_j $.

For each $j \in \bN \cup \{0\}$, we define
\begin{equation*}
    \mu_j := \frac{1}{|E_j|} \mathbf{1}_{E_j}(x_1), \qquad x_1 \in \bR.
\end{equation*}
We identify the function $\mu_j$ with the absolutely continuous measure $\mu_jdx_1$ so that 
\begin{equation*}
    \int g d\mu_j := \int g \mu_j dx_1.
\end{equation*}
From $\mu_j$, we define the measure $\nu_j$ as follows.
\begin{equation*}
    \int f d\nu_j := \int f(x_1,x_1^2) d\mu_j.
\end{equation*}
For simplicity, define $\norm{\mu_j} := \mu_j(\bR)$ and $\norm{\nu_j} := \nu_j(\bR^2)$. The measures $\mu_j$ and $\nu_j$ converge weakly as $j \rightarrow \infty$ to probability measures $\mu$ and $\nu$ supported on $E_\infty$ and $P_\infty$ respectively.

\begin{lem}[Lemma 6, \cite{LW18}]\label{CanDim} The measure $\mu $ and the set $E_\infty$ constructed above satisfy the following.\\  
(1) The set $E_\infty$ has Hausdorff dimension $\alpha$.\\
(2) For any $0< \epsilon <\alpha $, we have
\begin{equation*}
    \mu(B(x,r)) \lesssim_{\alpha,\epsilon} r^{\alpha-\epsilon} \qquad \forall \xi \in \bR, \forall r>0.
\end{equation*}
\end{lem}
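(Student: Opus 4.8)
The plan is to obtain (2) directly from the geometry of the construction and then deduce (1) from (2) by the mass distribution principle together with an elementary covering bound; this is exactly the strategy behind Lemma 6 of \cite{LW18}, which I would carry out as follows.

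First I would record two structural facts. The set $E_j$ is a disjoint union of $|A_j|=t_1\cdots t_j$ closed intervals of length $N_j^{-1/2}$ sitting on the lattice $N_j^{-1/2}\bZ$, and since the $\mu_k$ form a consistent (martingale) sequence, each such level-$j$ interval carries $\mu$-mass exactly $1/|A_j|$. From $n_i^{\alpha/2}\le t_i<n_i^{\alpha/2}+1\le 2n_i^{\alpha/2}$ (using $n_i\ge2$ for $i\ge1$) one gets $N_j^{\alpha/2}\le|A_j|<2^jN_j^{\alpha/2}$; in particular every level-$j$ interval has $\mu$-mass at most $N_j^{-\alpha/2}$, i.e.\ at most (its length)$^{\alpha}$, and $\mu$ has no atoms.

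For (2), the case $r\ge1$ is trivial since $\alpha-\epsilon>0$. For $0<r<1$ I would pick $j\ge0$ with $N_{j+1}^{-1/2}\le r<N_j^{-1/2}$. An interval of length $2r<2N_j^{-1/2}$ meets at most three cells of $N_j^{-1/2}\bZ$, hence at most three level-$j$ intervals, so $\mu(B(x,r))\le 3N_j^{-\alpha/2}$. It remains to check $N_j^{-\alpha/2}\lesssim_{\alpha,\epsilon}r^{\alpha-\epsilon}$: using $r\ge N_{j+1}^{-1/2}$ and $N_{j+1}=N_jn_{j+1}$,
\[
\frac{N_j^{-\alpha/2}}{r^{\alpha-\epsilon}}\le N_j^{-\alpha/2}N_{j+1}^{(\alpha-\epsilon)/2}=N_j^{-\epsilon/2}\,n_{j+1}^{(\alpha-\epsilon)/2},
\]
and then \eqref{nvcond3} applied with exponent $\epsilon/\alpha$ bounds $n_{j+1}^{(\alpha-\epsilon)/2}\lesssim_\epsilon N_j^{\epsilon(\alpha-\epsilon)/(2\alpha)}$, making the right side $\lesssim_{\alpha,\epsilon}N_j^{-\epsilon^2/(2\alpha)}\le1$. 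This scale-matching step — realizing that the crude level-$j$ estimate already suffices once the slow-growth condition \eqref{nvcond3} is used to absorb the non-dyadic jump from $N_j$ to $N_{j+1}$ into the $\epsilon$ loss — is the only point that requires any care; the rest is bookkeeping, and in particular conditions \eqref{nvcond1} and \eqref{nvcond4} are not needed here.

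Finally, for (1): the lower bound $\dim_H E_\infty\ge\alpha$ follows by applying the mass distribution principle to (2) for each $\epsilon\in(0,\alpha)$, which gives $\cH^{\alpha-\epsilon}(E_\infty)>0$ and hence $\dim_H E_\infty\ge\alpha-\epsilon$, and then letting $\epsilon\to0$. For the upper bound, $E_\infty\subseteq E_j$ is covered by $|A_j|<2^jN_j^{\alpha/2}$ intervals of length $N_j^{-1/2}$, so for any $s>\alpha$ the corresponding $s$-dimensional sum is at most $2^jN_j^{-(s-\alpha)/2}$; choosing $a>0$ in \eqref{nvcond2} with $a^{(s-\alpha)/2}>2$ forces this to $0$, so $\cH^s(E_\infty)=0$ for every $s>\alpha$ and therefore $\dim_H E_\infty\le\alpha$.
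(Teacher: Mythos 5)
Your proof is correct, and it is the standard argument: obtain the Frostman bound (2) by locating $r$ between consecutive scales $N_{j+1}^{-1/2}\le r<N_j^{-1/2}$, using that each level-$j$ interval carries mass $1/|A_j|\le N_j^{-\alpha/2}$ and absorbing the gap $n_{j+1}=N_{j+1}/N_j$ into the $\epsilon$-loss via \eqref{nvcond3}, and then deduce (1) from the mass distribution principle (lower bound) and the covering of $E_\infty$ by the $|A_j|$ intervals of $E_j$ combined with \eqref{nvcond2} (upper bound). The paper itself does not prove this lemma but simply cites Lemma 6 of {\L}aba--Wang \cite{LW18}; your argument is the one given there, so there is nothing to compare beyond noting the match. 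One small caveat on your parenthetical remark: you do in fact invoke \eqref{nvcond1} (via $n_i\ge 2$, or at least $n_i\ge 1$) to get $t_i\le 2n_i^{\alpha/2}$ and hence $|A_j|<2^jN_j^{\alpha/2}$ for the dimension upper bound, so it is not quite accurate to say \eqref{nvcond1} is unused; only \eqref{nvcond4} is genuinely irrelevant here.
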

It is easy to show that the same is true for $\nu$. 
\begin{lem}\label{CanDimPara} The measure $\nu$ and the set $P_\infty$ constructed above satisfy the following. \\
(1) The set $P_\infty$ has Hausdorff dimension $\alpha$.\\
(2) For any $0< \epsilon <\alpha $, we have
\begin{equation*}
    \nu(B(x,r)) \lesssim_{\alpha,\epsilon} r^{\alpha-\epsilon} \qquad \forall \xi \in \bR^2, \forall r>0.
\end{equation*}
\end{lem}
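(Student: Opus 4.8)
The plan is to deduce both statements from Lemma~\ref{CanDim} by observing that $\nu$ is simply the push-forward of $\mu$ under the parametrization map $\phi\colon[0,1]\to\bP^1$, $\phi(t)=(t,t^2)$. This map is bi-Lipschitz on $[0,1]$: for $s,t\in[0,1]$,
\begin{equation*}
    |t-s|\le |\phi(t)-\phi(s)| = |t-s|\sqrt{1+(s+t)^2} \le \sqrt{5}\,|t-s|.
\end{equation*}
By the defining relation $\int f\,d\nu=\int f\circ\phi\,d\mu$ we have $P_\infty=\phi(E_\infty)$ and $\nu(A)=\mu(\phi^{-1}(A))$ for every Borel set $A\subseteq\bR^2$, so all the relevant quantitative information transfers.

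For part (2), fix a ball $B(x,r)\subseteq\bR^2$ with $x=(x^{(1)},x^{(2)})$. If $\phi(t)\in B(x,r)$ then in particular $|t-x^{(1)}|\le r$, hence $\phi^{-1}(B(x,r))\subseteq [x^{(1)}-r,\,x^{(1)}+r]$, an interval of length $2r$, and therefore
\begin{equation*}
    \nu(B(x,r)) = \mu\bigl(\phi^{-1}(B(x,r))\bigr) \le \mu\bigl(B(x^{(1)},r)\bigr) \lesssim_{\alpha,\epsilon} r^{\alpha-\epsilon}
\end{equation*}
by Lemma~\ref{CanDim}(2). The implied constant is inherited verbatim, so it is independent of $x$ and $r$. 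This is the whole of (2).

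For part (1), the upper bound is immediate: $\phi$ is Lipschitz, Lipschitz images do not increase Hausdorff dimension, so $\dim_H P_\infty=\dim_H\phi(E_\infty)\le\dim_H E_\infty=\alpha$ by Lemma~\ref{CanDim}(1). For the matching lower bound I would apply the mass distribution principle (Frostman) to the bound just proved in (2): $\nu$ is a probability measure supported on $P_\infty$ with $\nu(B(x,r))\lesssim_{\alpha,\epsilon}r^{\alpha-\epsilon}$, which forces $\dim_H P_\infty\ge\alpha-\epsilon$ for every $\epsilon\in(0,\alpha)$, hence $\dim_H P_\infty\ge\alpha$; equivalently one can push forward the Frostman measure on $E_\infty$ through the bi-Lipschitz map $\phi$. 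Combining the two bounds yields $\dim_H P_\infty=\alpha$.

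There is no genuine obstacle here — as the text remarks, this is ``easy'' — and the only point that needs a word of care is the uniformity of the constant in (2), which is automatic since it comes directly from Lemma~\ref{CanDim}(2). The entire argument is a routine bi-Lipschitz transfer of Lemma~\ref{CanDim} from $[0,1]$ to the parabola.
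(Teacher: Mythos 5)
Your proof is correct, and it is the argument the paper has in mind: the paper itself offers no proof of Lemma~\ref{CanDimPara}, stating only that ``it is easy to show that the same is true for $\nu$'' after Lemma~\ref{CanDim}, and the intended easy argument is precisely the bi-Lipschitz transfer you spell out. The one point worth making fully explicit (which you essentially do) is that $P_\infty=\phi(E_\infty)$ because $\phi$ is injective and $E_\infty$ is compact, so that the support of the push-forward measure $\nu$ is indeed $P_\infty$ and the Frostman argument for the lower bound on $\dim_H P_\infty$ applies.
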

\section{Decoupling inequalities}\label{secDec}
In this section, we will derive the decoupling estimates for $E_\infty$ and $P_\infty$. We will define decoupling constants and discrete restriction constants, and we will verify relations between them. Since we need to look into $E_j$ in different scales, we define the following.

For $0 \leq i \leq j$ and $a \in A_{i}$, we define $E_{j,i,a} =E_j \cap N_i^{-1/2}(a+ [0,1]) $, see Figure \ref{fig1}. If $i=0$, since $A_0=\{0\}$, note that $E_{j,i,a} = E_{j,0,0}=E_j$. Let $\cP_j(E_{j,i,a}) $ be the partition of $E_{j,i,a}$ into intervals of length $N_j^{-1/2}$ and $\widehat{f_I}(\xi_1) := \widehat{f_I}(\xi_1) \mathbf{1}_I(\xi_1)$ for $I \in \cP_{j}(E_{j,i,a})$ and $\xi_1 \in \bR$. Sometimes, we will write $I(b) \in \cP_{j}(E_{j,i,a})$ for $N_j^{-1/2} (b +[0,1]) \subseteq E_{j,i,a}$ for some $b \in A_j$. 

\begin{figure}
    \centering
\tikzset{every picture/.style={line width=0.75pt}} 

\begin{tikzpicture}[x=0.75pt,y=0.75pt,yscale=-1,xscale=1]

\draw    (148,47.69) -- (496.4,47.69) ;
\draw  [color={rgb, 255:red, 0; green, 0; blue, 0 }  ,draw opacity=0 ][fill={rgb, 255:red, 0; green, 0; blue, 0 }  ,fill opacity=1 ] (200.28,44.23) -- (230.06,44.23) -- (230.06,47.3) -- (200.28,47.3) -- cycle ;
\draw    (200.28,37.69) -- (200.28,47.8) ;
\draw    (203.88,37.69) -- (228.28,37.69) ;
\draw [shift={(231.28,37.69)}, rotate = 180] [fill={rgb, 255:red, 0; green, 0; blue, 0 }  ][line width=0.08]  [draw opacity=0] (5.36,-2.57) -- (0,0) -- (5.36,2.57) -- cycle    ;
\draw [shift={(200.88,37.69)}, rotate = 0] [fill={rgb, 255:red, 0; green, 0; blue, 0 }  ][line width=0.08]  [draw opacity=0] (5.36,-2.57) -- (0,0) -- (5.36,2.57) -- cycle    ;
\draw  [color={rgb, 255:red, 0; green, 0; blue, 0 }  ,draw opacity=0 ][fill={rgb, 255:red, 0; green, 0; blue, 0 }  ,fill opacity=1 ] (290.55,44.23) -- (320.33,44.23) -- (320.33,47.3) -- (290.55,47.3) -- cycle ;
\draw  [color={rgb, 255:red, 0; green, 0; blue, 0 }  ,draw opacity=0 ][fill={rgb, 255:red, 0; green, 0; blue, 0 }  ,fill opacity=1 ] (260.78,44.23) -- (290.55,44.23) -- (290.55,47.3) -- (260.78,47.3) -- cycle ;
\draw  [color={rgb, 255:red, 0; green, 0; blue, 0 }  ,draw opacity=0 ][fill={rgb, 255:red, 0; green, 0; blue, 0 }  ,fill opacity=1 ] (411,44.23) -- (440.78,44.23) -- (440.78,47.3) -- (411,47.3) -- cycle ;
\draw  [color={rgb, 255:red, 0; green, 0; blue, 0 }  ,draw opacity=0 ][fill={rgb, 255:red, 0; green, 0; blue, 0 }  ,fill opacity=1 ] (351.53,44.23) -- (381.3,44.23) -- (381.3,47.3) -- (351.53,47.3) -- cycle ;
\draw    (230.06,37.19) -- (230.06,47.3) ;
\draw    (200.28,47.3) -- (200.28,57.42) ;
\draw    (440.78,47.8) -- (440.78,57.92) ;
\draw    (203.28,57.42) -- (437.78,57.42) ;
\draw [shift={(440.78,57.42)}, rotate = 180] [fill={rgb, 255:red, 0; green, 0; blue, 0 }  ][line width=0.08]  [draw opacity=0] (5.36,-2.57) -- (0,0) -- (5.36,2.57) -- cycle    ;
\draw [shift={(200.28,57.42)}, rotate = 0] [fill={rgb, 255:red, 0; green, 0; blue, 0 }  ][line width=0.08]  [draw opacity=0] (5.36,-2.57) -- (0,0) -- (5.36,2.57) -- cycle    ;

\draw (178.9,66.55) node [anchor=north west][inner sep=0.75pt]  [font=\small]  {$N_{i}^{-1/2} a$};
\draw (201.27,9.65) node [anchor=north west][inner sep=0.75pt]  [font=\small]  {$N_{j}^{-1/2}$};
\draw (302.27,66.4) node [anchor=north west][inner sep=0.75pt]  [font=\small]  {$N_{i}^{-1/2}$};

\end{tikzpicture}
    \caption{$E_{j,i,a}$}
    \label{fig1}
\end{figure}

For $p \geq 2$, let $D_p(E_{j,i,a})$ denote the best constant such that
\begin{equation}\label{defD_pE_j,i}
    \norm{\sum_{I \in \cP_{j}(E_{j,i,a}) } f_I }_{L^p(\bR)} \leq D_p(E_{j,i,a}) \left( \sum_{ I \in \cP_{j}(E_{j,i,a})} \norm{f_I}_{L^p(\bR)}^2 \right)^{1/2}
\end{equation}
for any function $f$ such that $\supp \widehat{f} \subseteq E_{j,i,a}$ and let $D_{p}(E_{j,i}) =\sup_{a \in A_i} D_{p}(E_{j,i,a})$. 

If $I \in \cP_{j}(E_{j,i,a})$, there exist an unique element $b \in {A_j}$ such that $I(b) \in \cP_j(E_{j,i,a})$. Thus, we define  
\begin{equation*}
	\begin{split}
		\Omega_I = \{\xi \in \bR^2 : &bN_{j}^{-1/2} \leq \xi_1 \leq (b+1)N_{j}^{-1/2},\\
		&|\xi_2 - (2b+1)N_{j}^{-1/2}(\xi_1 - bN_{j}^{-1/2}) - b^2N_{j}^{-1}| \leq N_{j}^{-1}\}
	\end{split}
\end{equation*}
and $P_{j,i,a}  = \cup_{I \in \cP_j(E_{j,i,a})} \Omega_I$ and $f_{\Omega_I} = \widehat{f}(\xi) \mathbf{1}_{\Omega_I}(\xi)$ for $\xi \in \bR^2$. Let $D_p(P_{j,i,a})$ be the best constant such that 
\begin{equation}\label{defD_pP_j,i}
	\norm{\sum_{I \in \cP_{j}(E_{j,i,a}) } f_{\Omega_I} }_{L^p(\bR^2 )} \leq D_p(P_{j,i,a}) \left( \sum_{ I \in \cP_{j}(E_{j,i,a})} \norm{f_{\Omega_I}}_{L^p(\bR^2)}^2 \right)^{1/2}
\end{equation}
for any function $f$ such that $\supp \widehat{f} \subseteq P_{j,i,a}$. Similarly, let $D_p(P_{j,i}) = \sup_{a \in A_i} D_p(P_{j,i,a})$.

We also need discrete versions of $D_p(E_{j,i})$ and $D_p(P_{j,i})$. For $a \in A_i$, let us define
\begin{equation*}
	A_{j,i,a} = A_j \cap (N_jN_i^{-1})^{1/2} (a+[0,1)),
\end{equation*}
i.e., $A_{j,i,a}$ is the set of $j$-th level descendants of $a \in A_i$. Then, let $K_p(E_{j,i,a})$ be the best constant such that
\begin{equation*}
    \norm{\sum_{b \in A_{j,i,a}} c_b e(bx)}_{L^p([0,1])} \leq K_p(E_{j,i,a}) \left(\sum_{b \in A_{j,i,a}} |c_b|^2\right)^{1/2}
\end{equation*}
for any $\{c_b\}_{b \in A_{j,i,a}} \in \ell^2$ and $K_p(E_{j,i}) := \sup_{a \in A_i} K_p(E_{j,i,a})$. Similarly, let $K_p(P_{j,i,a})$ be the best constant such that 
\begin{equation*}
    \norm{\sum_{b \in A_{j,i,a}} c_b e({bx_1 + b^2x_2})}_{L^p([0,1]^2)} \leq K_p (P_{j,i,a}) \left(\sum_{b \in A_{j,i,a}}|c_b|^2 \right)^{1/2}
\end{equation*}
for any $\{c_b\}_{b \in A_{j,i,a}} \in \ell^2$ and $K_p(P_{j,i}) := \sup_{a \in A_i} K_p(P_{j,i,a})$.

In short, $D_p(E_{j,i,a})$ and $D_p(P_{j,i,a})$ are decoupling constants for $E_{j,i,a}$ and $P_{j,i,a}$ respectively and $K_p(E_{j,i,a})$ and $K_p(P_{j,i,a})$ are discrete restriction constants for $E_{j,i,a}$ and $P_{j,i,a}$ respectively. If $i=0$, we simply write $D_p(E_{j}) := D_p(E_{j,0})$, $D_p(P_j) :=D_p(P_{j,0})$, $K_p(E_j):= K_p(E_{j,0})$ and $K_p(P_{j}):=K_p(P_{j,0})$. 

We will show that $K_{6/\alpha}(P_j) \lesssim_{\epsilon} N_j^\epsilon $ through the following inequalities:
\begin{equation*}
    K_{6/\alpha}({P_j}) \lesssim_{\alpha} D_{6/\alpha}(P_j) \lesssim_\epsilon N_j^\epsilon D_{2/\alpha}(E_j)
\end{equation*}
and
\begin{equation*}
    D_{2/\alpha}(E_j) \approx_\alpha K_{2/\alpha}(E_j) \lesssim_\epsilon \widetilde{C}_{2/\alpha}^j
\end{equation*}
where $\widetilde{C}_p$ is a constant multiple of $C_p$ in \eqref{Lambdapeq}. 

\subsection{From \texorpdfstring{$\Lambda(p)$-sets}{lambda(p) sets} to decoupling for \texorpdfstring{$E_j$}{Ej} }
We need the following lemmas to use $\Lambda(p)$-sets in multiscale.
\begin{lem}\label{DIS_1S_2}
    For $p \geq 2$, let $S_1$ be a subset of $\bZ^d$. For $a \in S_1$ and $k \in \bN$, let $S_{2,a}$ be subsets of $([0,k) \cap \bZ)^d$. Assume that the sets $S_1$ and $S_{2,a}$ satisfy
    \begin{equation}\label{DIS_1S_2eq1}
        \norm{\sum_{a \in S_1} c_a e(a \cdot x)}_{L^p([0,1]^d)} \leq C_1 \left( \sum_{a \in S_1} |c_a|^2 \right)^{1/2} \qquad \forall \{ c_a \}_{a \in S_1} \in \ell^2
    \end{equation}
    and
    \begin{equation}\label{DIS_1S_2eq2}
        \norm{\sum_{b \in S_{2,a}} c_b e(b \cdot x)}_{L^p([0,1]^d)} \leq C_2 \left( \sum_{b \in S_2} |c_b|^2 \right)^{1/2} \qquad \forall \{ c_b \}_{b \in S_2} \in \ell^2, \forall a \in S_1.
    \end{equation}
    Then, we have
    \begin{equation}\label{DIS_1S_2result}
        \norm{\sum_{a \in S_1} \sum_{b \in S_{2,a}} c_{a,b} e((ka+b)\cdot x) }_{L^p([0,1]^d)} \lesssim_p C_1C_2 \left( \sum_{a \in S_1} \sum_{ b\in S_{2,a} } |c_{a,b}|^2 \right)^{1/2}
    \end{equation}
    for all $\{ c_{a,b} \}_{a\in S_1, b \in S_{2,a}} \in \ell^2$.
\end{lem}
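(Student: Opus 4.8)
The plan is to separate the two scales present in the frequency set $\{ka+b : a\in S_1,\ b\in S_{2,a}\}$ --- the ``coarse'' scale $kS_1\subseteq k\bZ^d$ and the ``fine'' scale carried by the boxes $([0,k)\cap\bZ)^d$ containing the $S_{2,a}$ --- by lifting the sum to a function of two sets of variables, treating each scale by its own hypothesis, and then projecting back to the diagonal. Write $F_a(x)=\sum_{b\in S_{2,a}}c_{a,b}\,e(b\cdot x)$, so that $\widehat{F_a}$ is supported in $([0,k)\cap\bZ)^d$ and the left-hand side of \eqref{DIS_1S_2result} equals $\norm{\sum_{a\in S_1}e(ka\cdot x)F_a(x)}_{L^p([0,1]^d)}$. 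I would introduce $g$ on $[0,1]^d\times[0,1]^d$ by $g(w,y):=\sum_{a\in S_1}e(ka\cdot y)F_a(w)$; then the function $f(x):=\sum_{a\in S_1}e(ka\cdot x)F_a(x)$ we must bound is exactly the diagonal restriction $f(x)=g(x,x)$, and the strategy is (i) bound $\norm{g}_{L^p([0,1]^{2d})}$, then (ii) transfer back to the diagonal.

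For (i): for each fixed $w$, the map $y\mapsto g(w,y)$ is a trigonometric polynomial with frequencies in the integer dilate $kS_1$ of $S_1$. Since $\int_{[0,1]^d}h(kx)\,dx=\int_{[0,1]^d}h(x)\,dx$, the estimate \eqref{DIS_1S_2eq1} passes from $S_1$ to $kS_1$ with the \emph{same} constant $C_1$, so $\norm{g(w,\cdot)}_{L^p_y}\le C_1\big(\sum_{a\in S_1}|F_a(w)|^2\big)^{1/2}$. Raising to the $p$-th power and integrating in $w$ gives $\norm{g}_{L^p([0,1]^{2d})}\le C_1\norm{\big(\sum_a|F_a|^2\big)^{1/2}}_{L^p([0,1]^d)}$; because $p\ge 2$, Minkowski's inequality bounds this by $C_1\big(\sum_a\norm{F_a}_{L^p}^2\big)^{1/2}$, and then \eqref{DIS_1S_2eq2}, applied separately for each $a$, yields $\norm{g}_{L^p([0,1]^{2d})}\le C_1C_2\big(\sum_{a\in S_1}\sum_{b\in S_{2,a}}|c_{a,b}|^2\big)^{1/2}$.

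For (ii) --- the only delicate step --- I must show $\norm{g(x,x)}_{L^p_x([0,1]^d)}\lesssim_{p,d}\norm{g}_{L^p([0,1]^{2d})}$, and here one uses crucially that $\widehat{F_a}$ sits inside the box $[0,k)^d$ of side exactly $k$, matched to the coarse scale $k$ of the $y$-variable. Partition $[0,1]^d$ into the $k^d$ cubes $Q_v=v/k+[0,1/k)^d$, $v\in\{0,\dots,k-1\}^d$. Since the $y$-frequencies of $g$ lie in $k\bZ^d$ we have $g(w,y+v/k)=g(w,y)$, so for $x=v/k+z\in Q_v$ with $z\in[0,1/k)^d$ one gets $g(x,x)=g(v/k+z,z)$, whence $\int_{[0,1]^d}|g(x,x)|^p\,dx=\int_{[0,1/k)^d}\sum_v|g(v/k+z,z)|^p\,dz$. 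For fixed $z$, $w\mapsto g(w,z)$ is a trigonometric polynomial of degree $<k$ in each variable and $\{v/k+z:v\}$ is a regular grid whose density is exactly that required by the Marcinkiewicz--Zygmund inequality (the number of nodes matches the frequency bandwidth in each variable, after a harmless modulation centering the frequencies), which gives $\sum_v|g(v/k+z,z)|^p\lesssim_{p,d}k^d\,\norm{g(\cdot,z)}_{L^p_w}^p$. Finally $z\mapsto\norm{g(\cdot,z)}_{L^p_w}^p$ is $1/k$-periodic in each coordinate (again because $g(\cdot,z+v/k)=g(\cdot,z)$), so $\int_{[0,1/k)^d}\norm{g(\cdot,z)}_{L^p_w}^p\,dz=k^{-d}\norm{g}_{L^p([0,1]^{2d})}^p$; the two factors $k^d$ cancel and (ii) follows. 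Combining (i) and (ii) proves \eqref{DIS_1S_2result} with implied constant $\lesssim_{p,d}C_1C_2$.

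I expect (ii) to be the main obstacle: the $L^p$, $p>2$, transference does \emph{not} follow by interpolating the trivial $L^2$ (isometry) and $L^\infty$ bounds on the diagonal, so one genuinely has to exhibit the sampling structure and check both that the grid is dense enough for Marcinkiewicz--Zygmund and that the stray powers of $k^d$ cancel --- and it is precisely the hypothesis $S_{2,a}\subseteq([0,k)\cap\bZ)^d$ that makes both of these work. Everything else (dilation invariance of the $\Lambda(p)$ property, Minkowski, and \eqref{DIS_1S_2eq2}) is routine.
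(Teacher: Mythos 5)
Your proof is correct, and it takes a genuinely different route from the paper's. The paper rescales the $L^p$ norm to $[0,k]^d$, covers it by translates of a smooth bump $\psi$, expands $\psi(x)e(b\cdot x/k)$ into its Fourier series, applies the coarse hypothesis \eqref{DIS_1S_2eq1} mode by mode, and controls the tail in $m$ by a stationary-phase / integration-by-parts decay of $\widehat{\psi}_b(m)$ before invoking \eqref{DIS_1S_2eq2}; the argument is effective but requires bookkeeping over the smooth cutoff's Fourier tails. Your argument instead lifts to the product $[0,1]^{2d}$, where the two scales genuinely decouple: for fixed $w$ the $y$-polynomial has frequencies in $kS_1$ (with the same $\Lambda(p)$ constant $C_1$, by the dilation/periodicity observation), and for fixed $y$ the $w$-polynomial is a superposition of the $F_a$, so \eqref{DIS_1S_2eq1}, Minkowski (this is where $p\ge 2$ enters), and \eqref{DIS_1S_2eq2} give $\norm{g}_{L^p([0,1]^{2d})}\le C_1C_2\left(\sum|c_{a,b}|^2\right)^{1/2}$ with no error terms. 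The transfer back to the diagonal is then a clean sampling inequality, and the cancellation of the two $k^d$ factors is forced by the $1/k$-periodicity of $g(w,\cdot)$. The structural payoff is that the proof isolates exactly where the hypothesis $S_{2,a}\subseteq([0,k)\cap\bZ)^d$ is used: to make the grid spacing $1/k$ match the bandwidth $k$. This is cleaner and closer in spirit to the ``exact'' Proposition 3.5 of \cite{CDGJLM22} that the paper's remark contrasts against, though the sampling step still costs a $p$-dependent constant, so you land in the same place as the paper.

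One small point of rigor in step (ii): the phrase ``a harmless modulation centering the frequencies'' together with ``the number of nodes matches the bandwidth'' is slightly off. After centering, frequencies in $\{0,\dots,k-1\}$ become a polynomial of degree $\lceil(k-1)/2\rceil$, and for $k$ even the classical Marcinkiewicz--Zygmund theorem then asks for $k+1$ nodes, one more than you have. The inequality you actually need, namely that for $P$ with frequencies in $([0,k)\cap\bZ)^d$ and any $z$ one has $\sum_{v\in\{0,\dots,k-1\}^d}|P(v/k+z)|^p\lesssim_{p,d}k^d\norm{P}_{L^p([0,1]^d)}^p$, is nonetheless true for all $k$: reproduce $P$ by convolving with a de la Vall\'ee Poussin-type kernel $K$ adapted to the band $[0,k)^d$ (so $P=P*K$, $\norm{K}_{L^1}\lesssim_d 1$, and $\sum_v|K(v/k+\cdot)|\lesssim_d k^d$ pointwise), then apply H\"older inside the convolution and sum over $v$. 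This sidesteps the parity issue entirely and shows your step (ii) is sound; it would be worth stating this version of the sampling inequality explicitly rather than appealing to Marcinkiewicz--Zygmund by name.
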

\begin{lem}\label{DIequiv}
    For $p\geq 2$ and for any $a \in A_i$, $D_p(E_{j,i,a}) \approx_p K_p(E_{j,i,a})$.
\end{lem}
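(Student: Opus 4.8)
The plan is to show the two inequalities $D_p(E_{j,i,a}) \lesssim_p K_p(E_{j,i,a})$ and $K_p(E_{j,i,a}) \lesssim_p D_p(E_{j,i,a})$ separately, using the standard transference between decoupling for a union of unit-separated intervals and discrete (exponential-sum) restriction. Recall $E_{j,i,a}$ is a union of intervals of length $N_j^{-1/2}$ indexed by $b \in A_{j,i,a}$, where consecutive admissible integers $b$ sit on the lattice $N_j^{-1/2}\bZ$ after rescaling. So the first step is a rescaling: replacing $\xi_1$ by $N_j^{-1/2}\xi_1$ (equivalently dilating physical space by $N_j^{1/2}$) turns $E_{j,i,a}$ into a union of unit intervals centered at the integers $b \in A_{j,i,a}$, and both constants $D_p$ and $K_p$ are dilation-invariant, so it suffices to prove the equivalence in this normalized setting.

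For the direction $K_p \lesssim_p D_p$: given coefficients $\{c_b\}$, I would choose a fixed Schwartz bump $\phi$ with $\widehat{\phi}$ supported in $[-1/2,1/2]$ and $\widehat\phi \equiv 1$ on a slightly smaller interval, and set $f = \sum_b c_b\, e(b\,\cdot)\,\phi$, so that $f_I$ (the piece with frequency support in the $b$-th unit interval) is exactly $c_b e(b\,\cdot)\phi$. Applying the decoupling inequality \eqref{defD_pE_j,i} on a large ball and using $\|f_I\|_{L^p} \approx |c_b|\,\|\phi\|_{L^p}$ together with $\|\sum_b c_b e(b\,\cdot)\phi\|_{L^p(\bR)} \approx \|\sum_b c_b e(b\,\cdot)\|_{L^p([0,1])}$ (by periodicity of the exponential sum and rapid decay of $\phi$, summing the contributions over unit cells) gives the discrete restriction bound with constant $\lesssim_p D_p(E_{j,i,a})$.

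For the converse $D_p \lesssim_p K_p$: this is the slightly more delicate direction. Given $f$ with $\supp\widehat f \subseteq E_{j,i,a}$, write $f = \sum_I f_I$. After the rescaling each $\widehat{f_I}$ lives on a unit interval around an integer $b$. The idea is to discretize: cover $\bR$ by unit intervals $\{Q\}$, and on each $Q$ Fourier-expand $f$ as (essentially) a trigonometric polynomial $\sum_b c_b(Q) e(b\,\cdot)$ with $|c_b(Q)| \lesssim$ a local average of $|f_I|$; apply \eqref{Lambdapeq}-type bound (i.e. $K_p$) on each $Q$, then sum in $\ell^p$ over $Q$ and use the Minkowski/reverse-square-function relation to pass from $\bigl(\sum_Q \bigl(\sum_b|c_b(Q)|^2\bigr)^{p/2}\bigr)^{1/p}$ back to $\bigl(\sum_b \|f_I\|_{L^p}^2\bigr)^{1/2}$; the last step uses $p \geq 2$ (so $\ell^2 \hookrightarrow \ell^p$ in the right place, or an interpolation/Hölder argument) and a Bernstein-type control coming from the unit frequency support of each $f_I$. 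I would carry this out by mimicking the classical argument (e.g. the proof that $\Lambda(p)$ and $\ell^2$-decoupling into unit intervals are equivalent), perhaps citing that this is standard.

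The main obstacle I expect is the clean bookkeeping in the direction $D_p \lesssim_p K_p$: making the discretization on unit cells genuinely rigorous requires a careful choice of smooth partition of unity adapted to the unit lattice and controlling the tails so that the local trigonometric-polynomial approximation errors are summable, and then correctly juggling the $\ell^p(Q)$ versus $\ell^2(b)$ norms without losing a power of the number of intervals. The rescaling and the $K_p \lesssim D_p$ direction are routine; the content is entirely in showing that localizing $f$ to unit cells does not cost more than a $p$-dependent constant, which is exactly where the hypothesis $p\geq 2$ and the unit-scale frequency separation of the pieces $f_I$ get used.
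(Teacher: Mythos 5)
Your overall plan matches the paper's strategy: prove $K_p(E_{j,i,a})\lesssim_p D_p(E_{j,i,a})$ (the routine direction, which the paper simply cites as \cite[Theorem 13.1]{Dem20}) and then prove $D_p(E_{j,i,a})\lesssim_p K_p(E_{j,i,a})$ by localizing to cells of the dual scale, applying the discrete restriction inequality on each, and summing back up using Minkowski's inequality with exponent $p/2\geq 1$. The rescaling normalization you propose at the start is also implicitly used. So there is no wrong turn here.

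The one place where your sketch is materially vaguer than the paper's argument is exactly the spot you yourself flag: how to ``Fourier-expand $f$ as (essentially) a trigonometric polynomial $\sum_b c_b(Q)e(b\cdot)$'' on each unit cell $Q$. After restriction to $Q$, $f$ is of course not a trigonometric polynomial, and making ``essentially'' precise requires a tail-absorption argument that you'd have to supply or cite. The paper's proof of \eqref{AppenDec} avoids this approximation step entirely. Instead of tiling physical space and approximating locally, it multiplies $f$ by one Schwartz weight $\eta_R$ with $|\eta|\geq 1$ on $[-1,1]^d$ and $\widehat\eta$ supported on $[0,1]^d$, writes $f\eta_R$ as a Fourier integral over the bumps $\widehat f_a\ast\widehat\eta_R$ (each supported in an $O(R^{-1})$-cube about $a/R$), and does the change of variables $\xi=R^{-1}\zeta$, $x=Rz$. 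After this rescaling the integrand is \emph{literally} an exponential sum $\sum_{a\in S}(\cdots)e(-z\cdot a)$ in the $z$ variable, with coefficients given by pointwise values of $\widehat f_a\ast\widehat\eta_R$, so the discrete restriction bound \eqref{AppenDR} applies directly with no approximation error; then Minkowski's inequality, Plancherel, and H\"older in the form $\|f_a\eta_R\|_{L^2}\leq\|f_a\|_{L^p(\eta_R)}\|\eta_R\|_{L^1}^{1/2-1/p}$ land on the weighted right-hand side $\bigl(\sum_a\|f_a\|_{L^p(\eta_R)}^2\bigr)^{1/2}$, and the passage from this weighted local estimate \eqref{AppenDec} to the global \eqref{AppenDec2} is deferred to \cite[Proposition 9.15]{Dem20} and \cite[Chapter 4.1]{Zli19}, which is precisely the tiling step you were proposing to do by hand. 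So the two arguments are of the same genus; the paper's version is tidier because the global weight $\eta_R$ plus the Fourier change of variables replace the local-trig-polynomial approximation by an exact identity, and the local-to-global bookkeeping you were worried about is outsourced to a standard reference. If you want to carry out your tiling version yourself, you will need to use a weight in the local step anyway (so that slow variation of each $f_I$ at unit scale converts pointwise samples into $L^p$ averages); at that point the two proofs essentially merge.

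Your sketch of the easy direction $K_p\lesssim_p D_p$ via a fixed Schwartz bump $\phi$ with $\widehat\phi$ supported in a short interval is correct and standard, and matches what Demeter's Theorem 13.1 does, so no issues there.
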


Lemma \ref{DIS_1S_2} and \ref{DIequiv} follow from the proofs of Lemma 4 and Proposition 1 in \cite{LW18} and the duality of $L^p$ played a key role. In Section \ref{secAppen}, we provide alternative proofs of them which do not rely on duality.
\begin{remark}
    Readers may want to compare Lemma \ref{DIS_1S_2} with Proposition 3.5 in \cite{CDGJLM22}. They are similar but there is a trade-off. Lemma \ref{DIS_1S_2} can cover more general cases, because $p$ does not need to be an even integer and there is no assumption on carryover. However, the implicit constant of the inequality is larger than Proposition 3.5 in \cite{CDGJLM22} because the constant in \eqref{DIS_1S_2result} is not exactly $C_1C_2$, but $C_p' C_1C_2$ where $C_p' >1$.
\end{remark}
\begin{lem}\label{DecE_j,i}
    For $0 < \alpha <1$, $D_{2/\alpha}(E_{j,i}) \lesssim_\alpha \widetilde{C}_{2/\alpha}^{j-i}$ where the $\widetilde{C}_{2/\alpha}$ is a constant multiple of $C_{2/\alpha}$ in \eqref{Lambdapeq}.
\end{lem}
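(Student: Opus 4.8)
The plan is to prove the equivalent bound $K_{2/\alpha}(E_{j,i}) \lesssim_\alpha \widetilde{C}_{2/\alpha}^{\,j-i}$ for the \emph{discrete} restriction constant and then invoke Lemma \ref{DIequiv} to transfer it to $D_{2/\alpha}(E_{j,i})$. The proof is an induction on $j-i$ in which one peels off a single generation of the Cantor construction at each step using the multiscale $\Lambda(p)$-composition Lemma \ref{DIS_1S_2}. Throughout, fix $p = 2/\alpha$.

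The base case $j-i=0$ is trivial: for $a\in A_j$ the set $A_{j,j,a}=\{a\}$ is a single point, so $K_p(E_{j,j,a})=1$. For the inductive step, fix $a \in A_i$ and decompose $A_{j,i,a}$ according to the children of $a$ at level $i+1$. By construction $A_{i+1,a} = n_{i+1}^{1/2}a + S_{i+1,a}$ with $S_{i+1,a}\in\Sigma_{i+1}$, and for every $a'\in A_{i+1,a}$ we have $A_{j,i+1,a'}\subseteq (N_j/N_{i+1})^{1/2}(a'+[0,1))$. Writing $a' = n_{i+1}^{1/2}a + s$ with $s\in S_{i+1,a}$ and setting $k := (N_j/N_{i+1})^{1/2}$, one checks that
\begin{equation*}
    A_{j,i,a} = (N_j/N_i)^{1/2}a + \bigcup_{s\in S_{i+1,a}} \bigl( k s + B_s \bigr), \qquad B_s := A_{j,i+1,\,n_{i+1}^{1/2}a+s} - (N_j/N_{i+1})^{1/2}(n_{i+1}^{1/2}a+s) \subseteq [0,k)\cap\bZ .
\end{equation*}
The leading translation by $(N_j/N_i)^{1/2}a$ is a modulation of the trigonometric polynomial and does not change its $L^p([0,1])$ norm, so $K_p(E_{j,i,a})$ equals the $\Lambda(p)$ constant of $\bigcup_s (ks + B_s)$. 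Apply Lemma \ref{DIS_1S_2} in dimension $d=1$ with $S_1 = S_{i+1,a}$, for which \eqref{DIS_1S_2eq1} holds with $C_1 = C_{2/\alpha}$ (the constant from Theorem \ref{Lambdap}, since $S_{i+1,a}\in\Sigma_{i+1}$), and $S_{2,s} = B_s$, for which \eqref{DIS_1S_2eq2} holds with $C_2 = \sup_{a'\in A_{i+1,a}} K_p(E_{j,i+1,a'}) \leq K_p(E_{j,i+1})$, each $B_s$ being a translate of some $A_{j,i+1,a'}$. This yields $K_p(E_{j,i,a}) \leq C_p'\, C_{2/\alpha}\, K_p(E_{j,i+1})$, where $C_p'>1$ is the implicit constant of Lemma \ref{DIS_1S_2}; taking the supremum over $a\in A_i$ gives $K_p(E_{j,i}) \leq C_p'\, C_{2/\alpha}\, K_p(E_{j,i+1})$.

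Iterating this inequality $j-i$ times down to the base case gives $K_{2/\alpha}(E_{j,i}) \leq (C_p' C_{2/\alpha})^{j-i}$, so the claim holds with $\widetilde{C}_{2/\alpha} := C_p'\, C_{2/\alpha}$, a constant multiple of $C_{2/\alpha}$. Finally Lemma \ref{DIequiv} gives $D_{2/\alpha}(E_{j,i}) \approx_\alpha K_{2/\alpha}(E_{j,i}) \lesssim_\alpha \widetilde{C}_{2/\alpha}^{\,j-i}$, as desired. The only point requiring care is the ``carry-over'' bookkeeping in the displayed decomposition: one must verify that after factoring out the leading scale $(N_j/N_{i+1})^{1/2}$ the residual sets $B_s$ genuinely lie in $[0,k)\cap\bZ$, so that Lemma \ref{DIS_1S_2} applies, and that the $\Lambda(p)$ constant of each $B_s$ is controlled by the inductive hypothesis uniformly in $s$. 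Note also that since $C_p'>1$, the iteration does not produce the clean bound $C_{2/\alpha}^{\,j-i}$ but only $(C_p' C_{2/\alpha})^{j-i}$; this is exactly why the statement is phrased with the inflated constant $\widetilde{C}_{2/\alpha}$.
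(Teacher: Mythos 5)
Your proof is correct and follows essentially the same strategy as the paper: reduce to the discrete constant $K_{2/\alpha}$, peel off one generation per step via Lemma \ref{DIS_1S_2} combined with Theorem \ref{Lambdap}, iterate, and transfer to $D_{2/\alpha}$ with Lemma \ref{DIequiv}. The only cosmetic difference is the peeling direction: you strip the coarsest generation (inducting $i \to i+1$, writing $A_{j,i,a}$ as $S_{i+1,a}$ acting at the top of nested copies $B_s$ of $A_{j,i+1,\cdot}$), whereas the paper strips the finest generation (inducting $j \to j-1$, writing $A_{j,i,a}$ as $A_{j-1,i,a}$ with new innermost digits drawn from $S_{j,b_1}$); both decompositions fit the hypotheses of Lemma \ref{DIS_1S_2} and yield the same constant $\widetilde{C}_{2/\alpha}^{\,j-i}$.
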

\begin{proof}
    For any $a \in A_i$, note that
    \begin{equation*}
        \sum_{b \in A_{j,i,a}}c_b e(bx) = \sum_{b_1 \in A_{j-1,i,a} } \sum_{b_2 \in S_{j,b_1}} c_{b_1,b_2} e(n_j^{1/2} b_1 +b_2).
    \end{equation*}
    For fixed $a \in A_i$, let $C_1$ be a constant such that 
    \begin{equation*}
        \norm{\sum_{b_1 \in A_{j-1,i,a}} c_{b_1} e(b_1 x)}_{L^{2/\alpha}([0,1])} \leq C_1 \left( \sum_{b_1 \in A_{j-1,i,a}} |c_{b_1}|^2 \right)^{1/2} \qquad \forall \{ c_{b_1} \}_{b_1 \in A_{j-1,i,a}} \in \ell^2.
    \end{equation*}
    Combining Theorem \ref{Lambdap} and Lemma \ref{DIS_1S_2}, we get
    \begin{equation*}
        \norm{\sum_{b \in A_{j,i,a}}c_b e(bx)}_{L^{2/\alpha}([0,1])} \leq C_1 \widetilde{C}_{2/\alpha}  \left(\sum_{b \in A_{j,i,a}}|c_b|^2 \right)^{1/2} \qquad \forall \{ c_b \}_{b \in A_{j,i,a}} \in \ell^2.
    \end{equation*}
    where $\widetilde{C}_{2/\alpha}$ is a constant multiple of $C_{2/\alpha}$.
    
    Iterating Theorem \ref{Lambdap} and Lemma \ref{DIS_1S_2}, we get $K_{2/\alpha}(E_{j,i,a}) \lesssim \widetilde{C}_{2/\alpha}^{j-i}$. By Lemma \ref{DIequiv}, we obtain
    \begin{equation*}
        D_{2/\alpha}(E_{j,i,a}) \approx_\alpha K_{2/\alpha}(E_{j,i,a}) \lesssim \widetilde{C}_{2/\alpha}^{j-i}.
    \end{equation*}
    for any $a \in A_i$. Taking the supremum over $a \in A_{i}$ finishes the proof.
\end{proof}

\subsection{From decoupling for \texorpdfstring{$E_j$}{Ej} to decoupling for \texorpdfstring{$P_j$}{Pj}}
In this section, we write $N_{j,i} = N_jN_i^{-1}$. Also, note that $N_{j,0} = N_j$. We will prove the following decoupling estimate for $P_j$.
\begin{prop}\label{DecP_k}
    For $0 < \alpha <1$, we have $D_{6/\alpha} (P_{j}) \lesssim_{\alpha, \epsilon} N_{j}^\epsilon$.
\end{prop}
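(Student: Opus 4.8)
\emph{Reduction.} The plan is to establish, as the last link of the chain $K_{6/\alpha}(P_j)\lesssim_\alpha D_{6/\alpha}(P_j)\lesssim_\epsilon N_j^\epsilon D_{2/\alpha}(E_j)$ described above, the estimate
\begin{equation*}
	D_{6/\alpha}(P_j)\;\lesssim_{\alpha,\epsilon}\;N_j^\epsilon\,D_{2/\alpha}(E_j).
\end{equation*}
Granting this, Lemma~\ref{DecE_j,i} with $i=0$ gives $D_{2/\alpha}(E_j)\lesssim_\alpha\widetilde{C}_{2/\alpha}^{\,j}$, and applying \eqref{nvcond2} with $a=\widetilde{C}_{2/\alpha}^{\,1/\epsilon}$ gives $\widetilde{C}_{2/\alpha}^{\,j}\lesssim_{\alpha,\epsilon}N_j^\epsilon$; combining and relabelling $\epsilon$ yields $D_{6/\alpha}(P_j)\lesssim_{\alpha,\epsilon}N_j^\epsilon$, which is the proposition. (Since decoupling constants are $\ge 1$, the factor $D_{2/\alpha}(E_j)$ is kept only to mirror the one-dimensional bound; one really proves $D_{6/\alpha}(P_j)\lesssim_{\alpha,\epsilon}N_j^\epsilon$ outright.)

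\emph{Multiscale induction.} I would prove the displayed estimate by induction on the number of generations, using multiplicativity of decoupling across scales together with parabolic rescaling. To decouple a function with $\widehat f\subseteq P_j$ into the caps $\Omega_I$, $I\in\cP_j(E_j)$, first decouple into the generation-$(j-1)$ caps $\Omega_{I'}$, $I'\in\cP_{j-1}(E_{j-1})$ --- at cost $D_{6/\alpha}(P_{j-1})$ --- and then, inside each $\Omega_{I'}$, decouple into the $t_j$ sub-caps $\Omega_I$ with $I\subset I'$. The affine map that straightens the parabola arc over $I'$ to the arc over $[0,1]$ carries these sub-caps to genuine parabola caps of dimensions $\approx n_j^{-1/2}\times n_j^{-1}$, indexed by a $\Lambda(2/\alpha)$-set in $[0,n_j^{1/2})\cap\bZ$; condition \eqref{nvcond4}, absent from \cite{LW18}, is what makes this rescaled geometry uniform across scales. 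Writing $d_i$ for the resulting single-generation decoupling constant (the supremum of $D_{6/\alpha}$ over all single-scale configurations of $\le n_i^{1/2}$ caps of width $n_i^{-1/2}$ on the parabola over $[0,1]$ indexed by a $\Lambda(2/\alpha)$-set of constant $\le C_{2/\alpha}$) and iterating down to single generations, one obtains $D_{6/\alpha}(P_j)\le C^j\prod_{i=1}^{j}d_i$ with $C$ an absolute constant.

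\emph{The single-generation estimate, and conclusion.} The core step is $d_i\lesssim_{\alpha,\epsilon}n_i^\epsilon\,C_{2/\alpha}$: one must decouple the parabola over a single-scale collection of $t_i\approx n_i^{\alpha/2}$ caps of width $n_i^{-1/2}$ selected by a $\Lambda(2/\alpha)$-set into the individual caps, at the exponent $6/\alpha$. Here the Bourgain--Demeter $\ell^2L^6$ decoupling theorem for the parabola and the result of \cite{CDGJLM22} enter: the critical $\ell^2$-decoupling exponent of the parabola, namely $6$, is exactly one third of $6/\alpha$, and the curvature of the parabola upgrades the one-dimensional $\Lambda(2/\alpha)$-estimate for the index set to an $\ell^2L^{6/\alpha}$ decoupling estimate over the lifted caps, with a loss of only $n_i^\epsilon$; the cardinalities match the $\Lambda$-set heuristic since the lifted set has $|S|=t_i\approx n_i^{\alpha/2}$ out of $\sim n_i^{3/2}$ lattice points. (This single-generation statement may be phrased equivalently in terms of exponential sums via the analogue of Lemma~\ref{DIequiv} for $P_{j,i,a}$.) Feeding this back,
\begin{equation*}
	D_{6/\alpha}(P_j)\;\le\;C^j\prod_{i=1}^{j}(c_{\alpha,\epsilon}\,n_i^\epsilon\,C_{2/\alpha})\;=\;(Cc_{\alpha,\epsilon}C_{2/\alpha})^{\,j}\,N_j^{\epsilon}\;\lesssim_{\alpha,\epsilon}\;N_j^{2\epsilon},
\end{equation*}
using $\prod_{i=1}^j n_i^\epsilon=N_j^\epsilon$ and \eqref{nvcond2}; relabelling $\epsilon$ gives the displayed estimate, and hence Proposition~\ref{DecP_k}.

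\emph{Main obstacle.} The delicate part is the single-generation estimate of the previous paragraph, since $6/\alpha$ strictly exceeds the critical $\ell^2$-decoupling exponent $6$ of the full parabola: decoupling the \emph{entire} parabola into $n_i^{-1/2}$-caps at $L^{6/\alpha}$ costs $\approx n_i^{(1-\alpha)/4}$, whose product over $\sim j$ generations is $N_j^{(1-\alpha)/4}$, hopelessly large. One therefore genuinely needs the \emph{sparsity} of the $\Lambda(2/\alpha)$-set --- not merely that it is a sub-collection of caps --- to recover the super-critical exponent with only an $n_i^\epsilon$ loss, and this is exactly what \cite{CDGJLM22} supplies. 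A secondary, purely technical, point is to make the multiscale iteration rigorous: one must check that parabolic rescaling sends the configuration over $E_{j,j-1,a}$ to a rescaled standard configuration with constants uniform in $a$, and that the $\Omega_I$ sitting inside $\Omega_{I'}$ behave like honest caps after rescaling --- which is where the conditions on $\{n_j\}$, in particular \eqref{nvcond4}, are needed.
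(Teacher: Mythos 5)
Your almost-multiplicativity reduction $D_{6/\alpha}(P_j)\le C^j\prod_{i=1}^j d_i$ is sound (it is Lemma~\ref{DecAM} iterated down to single generations, combined with parabolic rescaling), but you have concentrated the entire difficulty into the single-generation estimate $d_i\lesssim_{\alpha,\epsilon}n_i^\epsilon C_{2/\alpha}$, and neither the proposal nor the cited results prove it. Bourgain--Demeter decoupling gives the critical exponent $6$ for the parabola, not $6/\alpha$, and, as you yourself note in the ``Main obstacle'' paragraph, applying it naively at $L^{6/\alpha}$ to all $n_i^{1/2}$ caps incurs a polynomial loss. The reference \cite{CDGJLM22} does not supply a one-scale replacement; its entire point is that decoupling for Cantor subsets of the parabola requires a genuinely \emph{multiscale} Bourgain--Demeter bootstrap, precisely because a clean one-generation estimate of the kind you invoke is not available. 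The paper follows that bootstrap route: a bilinear constant $M_{p,i_0}(j,k,i_1,i_2)$ is introduced, Lemma~\ref{Decbil} performs the bilinear reduction, Lemma~\ref{Deckeylem} trades a separation scale $k$ against refinement from $i$ to $2i$ (and $2i$ to $4i$) using one-dimensional decoupling for $E_{j,i}$ together with Fubini over horizontal strips, Lemma~\ref{Decsplit} symmetrizes the exponents, and the proof of Proposition~\ref{DecP_k} iterates to force the best exponent $\lambda$ in $D_{3p}(P_{j+i_0,i_0})\lesssim_{p,\epsilon}N_{j+i_0}^\epsilon N_{j+i_0,i_0}^\lambda$ down to zero.

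The gap is structural, not cosmetic. A single-generation $\Lambda(2/\alpha)$-set $S\subset[0,n_i^{1/2})\cap\bZ$ carries no internal self-similarity, so there is nothing at scale $i$ to bootstrap against; sparsity of $S$ alone is not known to give $\ell^2L^{6/\alpha}$ decoupling into its caps with only an $n_i^\epsilon$ loss, and your proposal neither proves this nor reduces it to a citable statement. What the paper's iteration actually exploits is that $E_\infty$ restricted to every level-$i$ interval is again a rescaled Cantor set with the same $\Lambda(2/\alpha)$-data at every subsequent generation --- exactly the structure that a one-scale reduction discards. A secondary correction: condition~\eqref{nvcond4} enters the paper's argument through the strip-size bound~\eqref{Ndoublewi_0} inside Lemma~\ref{Deckeylem}, i.e.\ in the two-generation key estimate, not (as you suggest) to ensure that sub-caps $\Omega_I$ sit inside $\Omega_{I'}$; that containment is automatic from the geometry. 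Also, ``$6$ is exactly one third of $6/\alpha$'' is false unless $\alpha=1/3$; you presumably meant that $2/\alpha$ is one third of $6/\alpha$.
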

It is well known that $K_p(P_j) \lesssim_p D_p(P_j)$ (see for example \cite[Theorem 13.1]{Dem20}). Thus, we have $K_{6/\alpha} (P_j) \lesssim_{\alpha,\epsilon} N_j^\epsilon$. We will use it in the proof of Proposition \ref{PropLRresult}.

In \cite{CDGJLM22}, they proved that the decoupling estimate for a Cantor set on the parabola can be derived from the decoupling estimate for a Cantor set on the line. We adapt their argument to our setting. The proofs of Lemma \ref{DecAM} and \ref{Decbil} are standard, but we included them for convenience.
\begin{lem}[Almost multiplicativity]\label{DecAM} For $i,i_0,j \geq 0$ such that $i \geq i_0$, we have
\begin{equation*} 
    D_p(P_{j+i,i_0}) \leq D_p(P_{j+i,i}) D_p(P_{i,i_0}).
\end{equation*}
\end{lem}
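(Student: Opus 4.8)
The plan is to run the standard two-step (\emph{associativity of decoupling}) argument. First I would fix $a\in A_{i_0}$ and reduce the statement to
\[
  D_p(P_{j+i,i_0,a})\ \le\ D_p(P_{i,i_0,a})\,D_p(P_{j+i,i}),
\]
after which taking the supremum over $a\in A_{i_0}$ gives the lemma. The case $j=0$ is trivial: then $E_{i,i,a'}$ is a single interval of length $N_i^{-1/2}$, so $\cP_i(E_{i,i,a'})$ is a singleton, $D_p(P_{i,i})=1$, and the inequality is an equality. So I may assume $j\ge1$.

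The one point that requires an actual computation is the geometric nesting of caps. For $a'\in A_{i,i_0,a}$, write $\Omega^{(i)}_{a'}$ for the parallelogram attached, in the definition \eqref{defD_pP_j,i} of $D_p(P_{i,i_0,a})$, to the generation-$i$ interval $N_i^{-1/2}(a'+[0,1])\in\cP_i(E_{i,i_0,a})$. I claim
\[
  \bigcup_{I\in\cP_{j+i}(E_{j+i,i,a'})}\Omega_I\ \subseteq\ \Omega^{(i)}_{a'}.
\]
Indeed, for $I=I(b)$ with $b\in A_{j+i,i,a'}$ and $\xi\in\Omega_I$, set $w=bN_{j+i}^{-1/2}$ and $v=\xi_1-w\in[0,N_{j+i}^{-1/2}]$; the defining inequality for $\Omega_I$ rearranges to
\[
  \xi_2=\xi_1^2+\bigl(N_{j+i}^{-1/2}v-v^2\bigr)+\eta,\qquad |\eta|\le N_{j+i}^{-1},
\]
while, with $s=\xi_1-a'N_i^{-1/2}\in[0,N_i^{-1/2}]$ (using the integrality built into the construction), the generation-$i$ secant line through the endpoints of $N_i^{-1/2}(a'+[0,1])$ on the parabola equals $L(\xi_1)=\xi_1^2+(N_i^{-1/2}s-s^2)$. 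Since $0\le N^{-1/2}t-t^2\le\tfrac14 N^{-1}$ on $[0,N^{-1/2}]$, and since $N_{j+i}=N_i\prod_{k=i+1}^{i+j}n_k\ge 2N_i$ because $j\ge1$ and $n_{i+1}\ge2$ by \eqref{nvcond1}, one gets $|\xi_2-L(\xi_1)|\le\tfrac54N_{j+i}^{-1}+\tfrac14N_i^{-1}\le N_i^{-1}$, i.e.\ $\xi\in\Omega^{(i)}_{a'}$. I would also record two bookkeeping facts: $\cP_{j+i}(E_{j+i,i_0,a})$ is the disjoint union of the families $\cP_{j+i}(E_{j+i,i,a'})$ over $a'\in A_{i,i_0,a}$, and the caps $\Omega^{(i)}_{a'}$ (sitting over distinct generation-$i$ intervals) have pairwise disjoint $\xi_1$-projections, hence are essentially disjoint.

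Granting this, the rest is routine. Given $f$ with $\supp\widehat f\subseteq P_{j+i,i_0,a}$, put $g_{a'}:=\sum_{I\in\cP_{j+i}(E_{j+i,i,a'})}f_{\Omega_I}$, so $\sum_{a'}g_{a'}=f$ and $\supp\widehat{g_{a'}}\subseteq P_{j+i,i,a'}\subseteq\Omega^{(i)}_{a'}$; by the essential disjointness, $g_{a'}$ is exactly the piece $f_{\Omega^{(i)}_{a'}}$ appearing in \eqref{defD_pP_j,i} for $P_{i,i_0,a}$, and $\supp\widehat f\subseteq\bigcup_{a'}\Omega^{(i)}_{a'}\subseteq P_{i,i_0,a}$. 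Hence the definition of $D_p(P_{i,i_0,a})$ gives
\[
  \norm{f}_{L^p(\bR^2)}\le D_p(P_{i,i_0,a})\Bigl(\sum_{a'\in A_{i,i_0,a}}\norm{g_{a'}}_{L^p(\bR^2)}^2\Bigr)^{1/2},
\]
and for each $a'$, since $\supp\widehat{g_{a'}}\subseteq P_{j+i,i,a'}$, the definition of $D_p(P_{j+i,i,a'})$ together with $D_p(P_{j+i,i,a'})\le D_p(P_{j+i,i})$ gives $\norm{g_{a'}}_{L^p(\bR^2)}\le D_p(P_{j+i,i})\bigl(\sum_{I\in\cP_{j+i}(E_{j+i,i,a'})}\norm{f_{\Omega_I}}_{L^p(\bR^2)}^2\bigr)^{1/2}$. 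Substituting and using the disjoint decomposition of $\cP_{j+i}(E_{j+i,i_0,a})$ yields $D_p(P_{j+i,i_0,a})\le D_p(P_{i,i_0,a})D_p(P_{j+i,i})\le D_p(P_{i,i_0})D_p(P_{j+i,i})$, and $\sup_{a\in A_{i_0}}$ finishes the proof.

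I expect the only real obstacle to be the nesting claim: one must verify that the fine caps $\Omega_I$ lying over a generation-$i$ interval fit inside the single generation-$i$ cap \emph{with no dilation constant}, which is what makes the bound cleanly multiplicative rather than merely $\lesssim$, and this is precisely where the convexity of the parabola and the growth $n_{i+1}\ge2$ enter. Everything past that is the standard two-step decoupling bookkeeping.
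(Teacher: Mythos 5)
Your proof is correct and takes essentially the same two-step decoupling approach as the paper; the only addition is that you explicitly verify the geometric nesting $\bigcup_{I\in\cP_{j+i}(E_{j+i,i,a'})}\Omega_I\subseteq\Omega^{(i)}_{a'}$ (using $n_{i+1}\ge 2$ to absorb the fine tolerance $N_{j+i}^{-1}$ plus the two secant errors into the coarse tolerance $N_i^{-1}$) and handle the degenerate $j=0$ case separately, whereas the paper's proof takes this containment for granted.
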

\begin{proof}
	For arbitrary fixed $a \in A_{i_0} $ and $I_i(b) = N_i^{-1/2}(b+[0,1])$ where $ b\in \bZ$, 
	\begin{equation*}
		\sum_{I \in \cP_{j+i}(E_{j+i,i_0,a})} f_{\Omega_I} = \sum_{I_i(b) \in \cP_i(E_{i,i_0, a})} \sum_{I\in \cP_{j+i} (E_{{j+i},i, b})}f_{\Omega_I}.
	\end{equation*}
    For any function $f$ such that $\supp \widehat{f} \subseteq P_{j+i,i_0,a}$, we obtain 
	\begin{equation*}
		\begin{split}
			&\norm{\sum_{I \in \cP_{j+i}(E_{j+i,i_0,a})} f_{\Omega_I}}_{L^p(\bR^2)} \\
			&\leq D_p(P_{i,i_0,a})\left( \sum_{I_i(b) \in \cP_i(E_{i,i_0, a})} \norm{ \sum_{I\in \cP_{j+i} (E_{{j+i},i, b})}f_{\Omega_I}}_{L^p(\bR^2)}^2 \right)^{1/2}\\
			&\leq D_p(P_{i,i_0,a})\left( \sum_{I_i(b) \in \cP_i(E_{i,i_0, a})} D_{p}(P_{j+i,i,b})  \sum_{I\in \cP_{j+i} (E_{{j+i},i, b})}\norm{f_{\Omega_I}}_{L^p(\bR^2)}^2 \right)^{1/2}\\
			&\leq D_{p}(P_{j+i,i}) D_p(P_{i,i_0})\left( \sum_{I_i(b) \in \cP_i(E_{i,i_0, a})}  \sum_{I\in \cP_{j+i} (E_{{j+i},i, b})}\norm{f_{\Omega_I}}_{L^p(\bR^2)}^2 \right)^{1/2}.
		\end{split}
        \end{equation*} 
\end{proof}

For $0 \leq k \leq i_1, i_2 \leq j$ and $i_0 \geq 0$, we define the bilinear constant $M_{p,i_0}(j,k,i_1,i_2,a)$ which is the smallest constant such that
\begin{equation*}
    \begin{split}
        & \int_{\bR^2} \bigg|\sum_{J_1 \in \cP_{j+i_0}(I_1 \cap E_{j+i_0})} f_{\Omega_{J_1}} \bigg|^p  \bigg|\sum_{J_2 \in \cP_{j+i_0}(I_2 \cap E_{j+i_0})} g_{\Omega_{J_2}} \bigg|^{2p}dx \\
        &\leq M_{p,i_0}(j,k,i_1,i_2,a)^{3p}\times\\
        &\qquad \left( \sum_{J_1 \in \cP_{j+i_0}(I_1 \cap E_{j+i_0})} \norm{f_{\Omega_{J_1}}}_{L^{3p}(\bR^2)}^2 \right)^{p/2} \left( \sum_{J_2 \in \cP_{j+i_0}(I_2 \cap E_{j+i_0})} \norm{g_{\Omega_{J_2}}}_{L^{3p}(\bR^2)}^2 \right)^p
    \end{split}
\end{equation*}
for all $I_1 \in \cP_{i_1+i_0}(E_{i_1+i_0,i_0,a})$ and $I_2 \in \cP_{i_2+i_0}(E_{i_2+i_0,i_0,a}) $ such that $d(I_1, I_2) \geq N_{k+i_0}^{-1/2}$ and for any $f$ and $g$ such that $\supp \widehat{f} \subseteq  P_{j+i_0,i_0,a} $ and $\supp \widehat{g} \subseteq P_{j+i_0,i_0,a}$ respectively. Let $M_{p,i_0}(j,k,i_1,i_2) = \sup_{a \in A_{i_0}} M_{p,i_0}(j,k,i_1,i_2,a)$.

For any $a \in A_{i_0}$, note that
\begin{equation*}
\begin{split}
    M_{p,i_0} (j,k,i_1,i_2,a) &\leq D_{3p}(P_{j+i_0,i_1+i_0})^{1/3} D_{3p}(P_{j+i_0,i_2+i_0})^{2/3}\\
    &\leq D_{3p}(P_{j+i_0, \min(i_1,i_2)+i_0}).
\end{split}
\end{equation*}
Thus, we get
\begin{equation}\label{multi-lin_dec}
    M_{p,i_0}(j,k,i_1,i_2) \leq D_{3p}(P_{j+i_0,\min(i_1,i_2)+i_0}).
\end{equation}
We will use it later in the proof.

\begin{lem}[Bilinear reduction]\label{Decbil} If $i \leq j$ and $i_0\geq 0$, then 
\begin{equation*}
    D_{3p}(P_{j+i_0,i_0}) \lesssim D_{3p} (P_{j+i_0,i+i_0}) +N_{i+i_0,i_0}^{2} M_{p,i_0}(j,i,i,i).
\end{equation*}
\end{lem}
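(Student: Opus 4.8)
The plan is to carry out a Bourgain--Guth type broad--narrow decomposition at scale $N_{i+i_0}^{-1/2}$, in the spirit of the parabola-decoupling argument of \cite{CDGJLM22}: a single decoupling at scale $N_{j+i_0}^{-1/2}$ is replaced by first decoupling into the blocks at scale $N_{i+i_0}^{-1/2}$ (controlled by $D_{3p}(P_{j+i_0,i+i_0})$) and then, on the part of space where no single block dominates, by a bilinear estimate (controlled by $M_{p,i_0}(j,i,i,i)$). Fix $a\in A_{i_0}$ and $f$ with $\supp\widehat{f}\subseteq P_{j+i_0,i_0,a}$, so that $f=\sum_J f_{\Omega_J}$ with $J$ running over $\cP_{j+i_0}(E_{j+i_0,i_0,a})$. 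For $I\in\cP_{i+i_0}(E_{i+i_0,i_0,a})$, corresponding to $b(I)\in A_{i+i_0}$, set $f_I:=\sum_J f_{\Omega_J}$ with $J$ running over the $N_{j+i_0}^{-1/2}$-intervals contained in $I$, i.e. over $\cP_{j+i_0}(E_{j+i_0,i+i_0,b(I)})$; the $\Omega_J$ are pairwise disjoint (already disjoint in the $\xi_1$-variable), so $f_I$ has Fourier support in the single block $P_{j+i_0,i+i_0,b(I)}$ and $\sum_I f_I=f$.

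First I would fix the pointwise dichotomy. For $x\in\bR^2$ let $I^{\ast}=I^{\ast}(x)$ maximize $|f_I(x)|$ over $I\in\cP_{i+i_0}(E_{i+i_0,i_0,a})$, and set $\mathcal{N}(x)=\{I: d(I,I^{\ast})<N_{i+i_0}^{-1/2}\}$; since all these intervals have the common length $N_{i+i_0}^{-1/2}$, the set $\mathcal{N}(x)$ contains only $I^{\ast}$ and its at most two immediate neighbours, so $|\mathcal{N}(x)|\le 3$. Split $\bR^2$ into the \emph{narrow} set, where $\sum_{I\in\mathcal{N}(x)}|f_I(x)|\ge\tfrac12\sum_I|f_I(x)|$, and the \emph{broad} set, where the reverse holds. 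On the narrow set $\big|\sum_I f_I(x)\big|\le 6\max_I|f_I(x)|\le 6\big(\sum_I|f_I(x)|^{3p}\big)^{1/(3p)}$, hence $\int_{\mathrm{narrow}}\big|\sum_I f_I\big|^{3p}\,dx\lesssim\sum_I\norm{f_I}_{L^{3p}(\bR^2)}^{3p}$. Applying $D_{3p}(P_{j+i_0,i+i_0,b(I)})\le D_{3p}(P_{j+i_0,i+i_0})$ to each $f_I$ and then $\sum_I c_I^{3p/2}\le\big(\sum_I c_I\big)^{3p/2}$ with $c_I=\sum_{J\subseteq I}\norm{f_{\Omega_J}}_{L^{3p}(\bR^2)}^2$ (valid since $3p/2\ge 1$) bounds the narrow part of $\norm{\sum_I f_I}_{L^{3p}(\bR^2)}$ by $\lesssim D_{3p}(P_{j+i_0,i+i_0})\big(\sum_J\norm{f_{\Omega_J}}_{L^{3p}(\bR^2)}^2\big)^{1/2}$.

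On the broad set, at each $x$ let $I_1=I_1(x)$ maximize $|f_I(x)|$ over $I\notin\mathcal{N}(x)$; then $d(I_1,I^{\ast})\ge N_{i+i_0}^{-1/2}$, $|f_{I_1}(x)|\le|f_{I^{\ast}}(x)|$, and $\big|\sum_I f_I(x)\big|\le 2\sum_{I\notin\mathcal{N}(x)}|f_I(x)|\le 2\,|\cP_{i+i_0}(E_{i+i_0,i_0,a})|\cdot|f_{I_1}(x)|^{1/3}|f_{I^{\ast}}(x)|^{2/3}$. Since $|f_{I_1}(x)|^{1/3}|f_{I^{\ast}}(x)|^{2/3}$ is one nonnegative summand, it is at most $\big(\sum_{d(I_1,I_2)\ge N_{i+i_0}^{-1/2}}|f_{I_1}(x)|^{p}|f_{I_2}(x)|^{2p}\big)^{1/(3p)}$. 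Raising to the power $3p$, integrating, and applying the definition of $M_{p,i_0}(j,i,i,i,a)$ with $g=f$ to each such pair (the required separation $N_{k+i_0}^{-1/2}=N_{i+i_0}^{-1/2}$ is exactly ours since $k=i$), then collapsing the $J_1$- and $J_2$-sums as in the narrow case (using $p/2\ge 1$), bounds the broad part of $\norm{\sum_I f_I}_{L^{3p}(\bR^2)}$ by $\lesssim|\cP_{i+i_0}(E_{i+i_0,i_0,a})|\,M_{p,i_0}(j,i,i,i,a)\big(\sum_J\norm{f_{\Omega_J}}_{L^{3p}(\bR^2)}^2\big)^{1/2}$. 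Since $|\cP_{i+i_0}(E_{i+i_0,i_0,a})|\le N_{i+i_0,i_0}^{1/2}\le N_{i+i_0,i_0}^{2}$, adding the two parts and taking the supremum over $a\in A_{i_0}$ yields the claimed inequality.

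I do not expect a real obstacle here --- this is by now a standard multilinear-to-linear reduction --- but the points needing attention are: (i) the bound $|\mathcal{N}(x)|=O(1)$, which uses that the level-$(i+i_0)$ intervals all have length $N_{i+i_0}^{-1/2}$, so only $I^{\ast}$ and its neighbours lie within distance $<N_{i+i_0}^{-1/2}$ of $I^{\ast}$; (ii) matching the separation produced in the broad case with the hypothesis $d(I_1,I_2)\ge N_{k+i_0}^{-1/2}$ in the definition of $M_{p,i_0}(j,i,i,i)$; and (iii) the repeated passage from $\ell^1$ sums of $L^{3p}$-norms of the pieces $f_I$, $f_{\Omega_J}$ to a single $\ell^2$ square function, legitimate because $p\ge 2$. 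The factor $N_{i+i_0,i_0}^{2}$ in the statement is deliberately wasteful; the argument above only needs $N_{i+i_0,i_0}^{1/2}$.
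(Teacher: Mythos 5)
Your proof is correct but takes a genuinely different route from the paper's. You run a pointwise Bourgain--Guth broad--narrow decomposition at scale $N_{i+i_0}^{-1/2}$: at each $x$ either an $O(1)$-cluster of blocks $f_I(x)$ dominates (narrow, handled by $D_{3p}(P_{j+i_0,i+i_0})$ applied block by block) or two transverse blocks are both comparable to the maximum (broad, handled by $M_{p,i_0}(j,i,i,i)$). The paper instead squares the $L^{3p}$ norm, writes $\big|\sum_J f_{\Omega_J}\big|^2$ as the double sum over pairs $(I_1,I_2)$ of level-$(i+i_0)$ blocks, splits the double sum into near pairs ($d(I_1,I_2)\le N_{i+i_0}^{-1/2}$) and far pairs, handles the near pairs by Cauchy--Schwarz plus decoupling, bounds the far-pair sum by (number of pairs) times the maximal $L^{3p/2}$ norm of a single product, and then passes from $\int F^{3p/2}G^{3p/2}$ to $(\int F^pG^{2p})^{1/2}(\int F^{2p}G^p)^{1/2}$ to reach $M_{p,i_0}$. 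Both are standard multilinear-to-linear reductions producing the same estimate (up to the precise power of $N_{i+i_0,i_0}$, which the lemma does not track); the squaring argument has a little less bookkeeping, while the broad--narrow version is perhaps more transparent and generalizes more readily. One small inaccuracy worth flagging: in your broad case, summing over all $\lesssim |\cP_{i+i_0}(E_{i+i_0,i_0,a})|^2$ transverse pairs before taking the $(3p)$-th root costs an additional factor $|\cP_{i+i_0}(E_{i+i_0,i_0,a})|^{2/(3p)}$, so the broad bound is really $\lesssim |\cP_{i+i_0}(E_{i+i_0,i_0,a})|^{1+2/(3p)}\,M_{p,i_0}(j,i,i,i,a)\big(\sum_J\norm{f_{\Omega_J}}_{L^{3p}}^2\big)^{1/2}$, and your closing claim that only $N_{i+i_0,i_0}^{1/2}$ is needed should read $N_{i+i_0,i_0}^{(1+2/(3p))/2}$; this is still comfortably below $N_{i+i_0,i_0}^2$, so the lemma as stated follows.
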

\begin{proof}
    For any $a \in A_{i_0}$ and for any $f$ with $\supp \widehat{f} \subseteq P_{j+i_0,i_0,a} $, we have
    \begin{align}
        &\norm{\sum_{J \in \cP_{j+i_0} (E_{j+i_0,i_0,a})} f_{\Omega_J}}_{L^{3p}(\bR^2)}^2 \nonumber \\
            &= \norm{ \sum_{I_1, I_2 \in \cP_{i+i_0} (E_{i+i_0,i_0,a})} \left( \sum_{J_1 \in \cP_{j+i_0} (I_1 \cap E_{j+i_0})} f_{\Omega_{J_1}} \sum_{J_2 \in \cP_{j+i_0} (I_2 \cap E_{j+i_0})} \overline{f_{\Omega_{J_2}}} \right) }_{L^{3p/2}(\bR^2)} \nonumber \\
            &\leq \norm{\sum_{\substack{I_1,I_2 \in \cP_{i+i_0}(E_{i+i_0,i_0,a}) \\ d(I_1,I_2 ) \leq N_{i+i_0}^{-1/2} } } (\cdots ) }_{L^{3p/2}(\bR^2)} + \norm{\sum_{\substack{I_1,I_2 \in \cP_{i+i_0}(E_{i+i_0,i_0,a}) \\ d(I_1,I_2 ) \geq N_{i+i_0}^{-1/2} } } (\cdots ) }_{L^{3p/2}(\bR^2)}. \label{Decbileq1}
    \end{align}
    By the Cauchy-Schwarz inequality, the first term of \eqref{Decbileq1} is bounded by
    \begin{equation*}
        \begin{split}
            &\leq \sum_{\substack{I_1,I_2 \in \cP_{i+i_0}(E_{i+i_0,i_0,a}) \\ d(I_1,I_2 ) \leq N_{i+i_0}^{-1/2} } } \norm{ \sum_{J_1 \in \cP_{j+i_0} (I_1 \cap E_{j+i_0})} f_{\Omega_{J_1}} }_{L^{3p}(\bR^2)}\norm{ \sum_{J_2 \in \cP_{j+i_0} (I_2 \cap E_{j+i_0})} f_{\Omega_{J_2}} }_{L^{3p}(\bR^2)}\\
            &\lesssim \sum_{I \in \cP_{i+i_0}(E_{i+i_0,i_0,a})} \norm{\sum_{J \in \cP_{j+i_0} (I \cap E_{j+i_0}) } f_{\Omega_J} }_{L^{3p}(\bR^2)}^2 \\
            &\leq \sum_{I(b)\in \cP_{i+i_0} (E_{i+i_0, i_0,a})} D_{3p}(P_{j+i_0,i+i_0,b})^2 \sum_{J \in \cP_{j+i_0}(I(b) \cap E_{j+i_0}) } \norm{f_{\Omega_J}}_{L^{3p}(\bR^2)}^2\\
            &\leq D_{3p}(P_{j+i_0, i+i_0})^2 \sum_{J \in \cP_{j+i_0}(E_{j+i_0,i_0,a}) } \norm{f_{\Omega_J}}_{L^{3p}(\bR^2)}^2.
        \end{split}
    \end{equation*}
   By the Cauchy-Schwarz inequality, $\int F^{3p/2} G^{3p/2} dx \leq (\int F^p G^{2p})^{1/2} (\int F^{2p} G^{p})^{1/2}$ for any two nonegative functions. Thus, the second term of \eqref{Decbileq1} is bounded by
    \begin{equation*}
    \begin{split}
            &\leq N_{i+i_0,i_0}^{2} \max_{\substack{ I_1,I_2 \in  \cP_{i+i_0} (E_{i+i_0},i_0,a) \\ d(I_1,I_2) \geq N_i^{-1/2}}} \norm{  \left( \sum_{J_1 \in \cP_{j+i_0} (I_1 \cap E_{j+i_0})} f_{\Omega_{J_1}}  \sum_{J_2 \in \cP_{j+i_0} (I_2 \cap E_{j+i_0})} f_{\Omega_{J_2}} \right) }_{L^{3p/2}(\bR^2)}\\
            &\leq N_{i+i_0,i_0}^{2}M_{p,i_0}(j,i,i,i)^2 \max_{\substack{ I_1,I_2 \in  \cP_{i+i_0} (E_{i+i_0},i_0,a) \\ d(I_1,I_2) \geq N_i^{-1/2}}} \begin{aligned}[t]
                \left( \sum_{J_1 \in \cP_{j+i_0} (I_1 \cap E_{j+i_0})} \norm{f_{\Omega_{J_1}}}_{L^{3p}(\bR^2)}^2 \right)^{1/2}\\ \left( \sum_{J_2 \in \cP_{j+i_0} (I_2 \cap E_{j+i_0})} \norm{f_{\Omega_{J_2}}}_{L^{3p}(\bR^2)}^2 \right)^{1/2}
            \end{aligned}\\
            &\leq N_{i+i_0,i_0}^{2} M_{p,i_0}(j,i,i,i)^2 \sum_{J \in \cP_{j+i_0} (E_{j+i_0,i_0,a})} \norm{f_{\Omega_J}}_{L^{3p}(\bR^2)}^2.
    \end{split}
    \end{equation*}
\end{proof}
\begin{lem}[Key estimate in \cite{CDGJLM22}]\label{Deckeylem}
    Let $p = 2/\alpha $ where $ 0 < \alpha <1$. If $0 \leq k \leq i$, $2i \leq j$ and $i_0 \geq 0$, then
    \begin{equation}\label{keyest1}
        M_{p,i_0}(j,k,i, i) \lesssim_{p} N_{k+i_0,i_0}^{1/6} ( \widetilde{C}_p B)^{i/3} B^{i_0/3} M_{p,i_0}(j,k,2i, i).
    \end{equation}
    If $0 \leq k \leq i$, $ 4i \leq j$ and $i_0 \geq 0$, then
    \begin{equation}\label{keyest}
        M_{p,i_0}(j,k,i, 2i) \lesssim_{p} N_{k+i_0,i_0}^{1/3} (\widetilde{C}_p B)^i B^{2i_0/3} M_{p,i_0}(j,k,4i, 2i).
    \end{equation}
    where $B$ is the constant in \eqref{nvcond4} and $\widetilde{C}_p$ is the constant in Lemma \ref{DecE_j,i}.
\end{lem}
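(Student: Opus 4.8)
The plan is to adapt the iteration underlying the corresponding key estimate in \cite{CDGJLM22} to our non-homogeneous Cantor construction, keeping careful track of how the sequence $\{n_j\}$ enters. Observe first that \eqref{keyest1} and \eqref{keyest} are two instances of one statement: each raises, by a rescaling step, the refinement level of the \emph{first} cap appearing in the bilinear constant $M_{p,i_0}$ (from level $i+i_0$ to $2i+i_0$, respectively from $i+i_0$ to $4i+i_0$) while leaving the second cap untouched, and \eqref{keyest} is obtained by running the mechanism behind \eqref{keyest1} once more with $i$ replaced by $2i$. So I would carry out \eqref{keyest1} in detail and then indicate the bookkeeping changes for \eqref{keyest}.

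For \eqref{keyest1}, fix $a\in A_{i_0}$ and caps $I_1,I_2\in\cP_{i+i_0}(E_{i+i_0,i_0,a})$ with $d(I_1,I_2)\geq N_{k+i_0}^{-1/2}$. The steps would be: (i) split the inner $f$-sum over $I_1$ into the sums over the level-$(2i+i_0)$ subcaps $\widetilde I_1\subseteq I_1$; (ii) apply H\"older with exponents $(3,\tfrac32)$ to the integrand $|\sum f_{\Omega_{J_1}}|^{p}\,|\sum g_{\Omega_{J_2}}|^{2p}$ — the split dictated by the weights $p$ and $2p$ in the definition of $M_{p,i_0}$ — so that the $f$-side now sits in $L^{3p}=L^{6/\alpha}$ while the transversal $g$-side is frozen as an $L^{3p}$-weight; (iii) decouple this $f$-side from level $i+i_0$ to level $2i+i_0$: after the affine parabolic rescaling normalising the slab $\Omega_{I_1}$ this becomes an $\ell^2(L^{6/\alpha})$ decoupling for the Cantor set generated by the shifted tail $(n_{i+i_0+1},\dots,n_{2i+i_0})$, which — following \cite{CDGJLM22}, where curvature enters only through the fact that distinct Cantor caps occupy distinct frequency strips — reduces to its one-dimensional counterpart and hence is controlled by the estimate of Lemma \ref{DecE_j,i} interpolated against the trivial $\ell^2(L^\infty)$ bound; estimating the number of caps by \eqref{nvcond4} and absorbing the first $i_0$ generations via the almost-multiplicativity Lemma \ref{DecAM} produces the factor $(\widetilde C_pB)^{i/3}B^{i_0/3}$, the exponent $\tfrac13$ being exactly the outer H\"older exponent from (ii); (iv) the separation $d(I_1,I_2)\geq N_{k+i_0}^{-1/2}$, tracked through the rescaling, forces the transversal factor $N_{k+i_0,i_0}^{1/6}$; (v) repackage the resulting decoupled bilinear expression as $M_{p,i_0}(j,k,2i,i)$ (first cap now at level $2i+i_0$, second still at level $i+i_0$) and sum over the few subcaps $\widetilde I_1$ and over $I_1,I_2$, then take the supremum over $a$, using \eqref{nvcond3} so that all subcap counts and $n_\cdot$-factors stay within the asserted implicit constant. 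For \eqref{keyest} one repeats with the first cap refined from level $i+i_0$ to level $4i+i_0$ — two rescaling steps instead of one — which squares the $i$-generation decoupling cost, giving $(\widetilde C_pB)^{i}$ in place of $(\widetilde C_pB)^{i/3}$, replaces $B^{i_0/3}$ by $B^{2i_0/3}$, and turns the transversal factor into $N_{k+i_0,i_0}^{1/3}$.

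The main obstacle — and the precise place where \eqref{nvcond3} and \eqref{nvcond4} are needed — is that the parabolic rescaling does \emph{not} reproduce our Cantor set: normalising a level-$(i+i_0)$ cap yields the Cantor set generated by the shifted tail $(n_{i+i_0+1},n_{i+i_0+2},\dots)$, with the first $i_0$ generations absent. Quantifying this mismatch, both in the decoupling constant and in the comparison of the scales $N_{k+i_0},N_{i+i_0},N_{2i+i_0},N_{i_0}$, is what \eqref{nvcond3} (the $n_j$ grow slowly enough that subcap counts and $n_\cdot$-factors cost only $N^\epsilon$) and \eqref{nvcond4} (the doubling ratios satisfy $N_{2m}^{1/2}N_m^{-1}\leq B^m$) are designed for, and it is what produces the clean correction factors $B^{i_0/3}$ and $B^{2i_0/3}$; extracting these exponents with constants that do not depend on $j$ is the delicate point. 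A secondary technical point, handled as in \cite{CDGJLM22} but requiring care with the mixed powers $p$ and $2p$, is that the weighted decoupling in step (iii) must be performed inside the bilinear quantity so that the transversal $g$-factor is transported through the rescaling untouched.
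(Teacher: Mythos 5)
Your proposal has the right high-level shape — parabolic rescaling, reduction to the one-dimensional Cantor decoupling of Lemma~\ref{DecE_j,i}, and the role of \eqref{nvcond4} in controlling scale ratios through $B$ — but the central mechanism is not the one you describe, and two of your steps would in fact kill the gain you are trying to extract.

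The decisive point of the CDGJLM-style argument, which your proposal does not reproduce, is the following: after shifting so that $I_2$ (not $I_1$) sits at the origin, one \emph{freezes} the $x_1$ variable and observes that for each fixed $x_1$ the function $x_2\mapsto F_J(x_1,x_2)\,G(x_1,x_2)^2$ has its partial Fourier transform supported in a small interval around $\gamma_J^2$ of width $O(N_{i+i_0}^{-1})$, because $\widehat{F_J}$ lives in a thin horizontal strip at height $\gamma_J^2$ and $\widehat{G}$ lives in a tiny rectangle at the origin. The claim \eqref{Lemsecdec} is exactly a one-dimensional decoupling \emph{in $x_2$ only}, applied to the product $F_JG^2$, with the factor $c\approx N_{k+i_0,i_0}^{1/2}N_{2i+i_0}^{1/2}N_{i+i_0}^{-1}$ arising from the affine map $T$ (the tangent line to $t\mapsto t^2$ at the left endpoint of $I_1$) that carries the family of strips $\{\gamma_J^2+O(N_{i+i_0}^{-1})\}$ onto dilated Cantor intervals $cJ$; the separation $\gamma_m\geq N_{k+i_0}^{-1/2}$ is precisely what makes $T$ well conditioned and gives the $N_{k+i_0,i_0}^{1/2}$ (hence $N_{k+i_0,i_0}^{1/6}$ after the $3p$-th root). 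Your step~(ii) — a H\"older split of $\int |F|^p|G|^{2p}$ into $\bigl(\int|F|^{3p}\bigr)^{1/3}\bigl(\int|G|^{3p}\bigr)^{2/3}$ with $G$ ``frozen as a weight'' — would separate $F$ from $G$, and once they are separated the vertical-strip observation about $\widehat{F_JG^2}$ is unavailable and the transversality gain cannot be produced (indeed such a split only recovers $M_{p,i_0}\leq D_{3p}$, which is the trivial inequality \eqref{multi-lin_dec}). The $1/3$ exponents in \eqref{keyest1} are an artefact of the $3p$-th power in the definition of $M_{p,i_0}$, not of an outer H\"older step.

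Smaller discrepancies: the paper normalizes $I_2$ to $[0,N_{i+i_0}^{-1/2}]$, not $I_1$, and this matters since $\widehat G$ must sit near the origin; Lemma~\ref{DecE_j,i} is applied directly at $p=2/\alpha$, with no interpolation against a trivial $\ell^2(L^\infty)$ bound; Lemma~\ref{DecAM} is not used in this proof, and \eqref{nvcond3} is not needed here either — the only structural hypothesis that enters is \eqref{nvcond4}, via \eqref{Ndoublewi_0}, to bound $N_{2i+i_0}^{1/2}N_{i_0}^{1/2}/N_{i+i_0}\leq B^{i+i_0}$. For \eqref{keyest} the paper does not simply ``run \eqref{keyest1} with $i\mapsto 2i$''; it introduces the intermediate coarsening $F_{J_0}=\sum_{J\subseteq J_0}F_J$, applies the $x_2$-decoupling twice — once from level $i+i_0$ to $2i+i_0$ and once from $2i+i_0$ to $4i+i_0$ — and uses \eqref{Ndoublewi_0} at both steps, which is where the squared factor $(\widetilde C_pB)^i B^{2i_0/3}$ actually comes from.
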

\begin{proof}
    We follow the proof of Lemma 2.4 in \cite{CDGJLM22} with modifications. The condition \eqref{nvcond4} comes into play since $n_k$ is nondecreasing. Let us prove \eqref{keyest1} first. Fix an arbitrary $a \in A_{i_0}$. We only need to consider when $f$ and $g$ satisfy
    \begin{equation}\label{f,gnormal}
        \sum_{J_1\in \cP_{j+i_0( I_1 \cap E_{j+i_0}) }} \norm{f_{\Omega_{J_1}}}_{L^{3p}(\bR^2)}^2 =\sum_{J_2\in \cP_{j+i_0( I_2 \cap E_{j+i_0}) }} \norm{g_{\Omega_{J_2}}}_{L^{3p}(\bR^2)}^2 = 1
    \end{equation}
    where $I_1$ and $I_2$ are intervals in the definition of $M_{p,i_0}(j,k,i_1,i_2,a)$. Then, it suffices to show that,
    \begin{equation}\label{keyest1nrm}
    \begin{split}
        \int_{\bR^2} \bigg|\sum_{J_1 \in \cP_{j+i_0}(I_1 \cap E_{j+i_0})} f_{\Omega_{J_1}} \bigg|^p & \bigg|\sum_{J_2 \in \cP_{j+i_0}(I_2 \cap E_{j+i_0})} g_{\Omega_{J_2}} \bigg|^{2p} dx\\
        &\lesssim_p N_{k+i_0,i_0}^{p/2} ( \widetilde{C}_p B)^{ip} B^{i_0p} M_{p,i_0}(j,k,2i, i)^{3p}.
    \end{split}
    \end{equation}
    Let $I_1= N_{i+i_0}^{-1/2}(b_1+[0,1])$ and $I_2= N_{i+i_0}^{-1/2}(b_2+[0,1])$ for some $b_1, b_2 \in A_{i+i_0}$. We can also assume that $I_2$ is to the left of $I_1$ so that $b_1-b_2 \geq N_{i+i_0}^{1/2} N_{k+i_0}^{-1/2}$. Using the change of variables
    \begin{equation*}
        \begin{split}
            x_1&=x_1' +b_2 N_{i+i_0}^{-1/2}\\
            x_2&=x_2'+2b_2N_{i+i_0}^{-1/2}x_1'+b_2^2N_{i+i_0}^{-1},
        \end{split}
    \end{equation*}
    we can reduce \eqref{keyest1nrm} to when $I_2=[0,N_{i+i_0}^{-1/2}]$ and $I_1= N_{i+i_0}^{-1/2}((b_1-b_2+[0,1])$. For each $J\in \cP_{2i + i_0}(I_1 \cap E_{2i+i_0})$, the center of $J$ is a distance $\geq N_{k+i_0}^{-1/2}$ away from the origin. Also, note that $I_1, I_2 \subseteq [0, N_{i_0}^{-1/2}]$.
    
    Let 
    \begin{equation*}
        F_{J} :=  \sum_{J_1 \in \cP_{j+i_0}(J \cap E_{j+i_0})} f_{\Omega_{J_1}}
    \end{equation*}
    and 
    \begin{equation*}
        G:=\sum_{J_2 \in \cP_{j+i_0}([0,N_{i+i_0}^{-1/2}] \cap E_{j+i_0})} g_{\Omega_{J_2}}.   
    \end{equation*}
    Since $f$ and $g$ are normalized in \eqref{f,gnormal}, we have
    \begin{equation*}
        \begin{split}
            &\sum_{J\in \cP_{2i+i_0}(I_1\cap E_{2i+i_0})} \norm{F_JG^2}_{L^p(\bR^2)}^2 \\
            &\leq M_{p,i_0}(j,k,2i,i)^6 \sum_{J\in \cP_{2i+i_0}(I_1\cap E_{2i+i_0})}\sum_{J_1 \in \cP_{j+i_0} (J\cap E_{j+i_0})} \norm{ f_{\Omega_{J_1}} }_{L^p(\bR^2)}^2 \\
            &\leq M_{p,i_0}(j,k,2i,i)^6 .
        \end{split}
    \end{equation*}
    Therefore, we have \eqref{keyest1nrm}, if we prove
    \begin{equation}\label{keyest1FG}
    \begin{split}
        &\norm{\sum_{J\in \cP_{2i+i_0}(I_1\cap E_{2i+i_0})} F_JG^2}_{L^p(\bR^2)}\\
        &\lesssim_{p} N_{k+i_0,i_0}^{1/2} (\tilde{C}_p B)^{i} B^{i_0} \left( \sum_{J\in \cP_{2i+i_0}(I_1\cap E_{2i+i_0})} \norm{F_JG^2}_{L^p(\bR^2)}^2 \right)^{1/2}.
    \end{split}
    \end{equation}
    We claim that for any fixed $x_1 \in \bR$,
    \begin{equation}\label{Lemsecdec}
    \begin{split}
        \int_{\bR} \bigg|\sum_{J \in \cP_{2i+i_0}(I_1 \cap E_{2i+i_0} )} & F_{J}(x_1,x_2) G(x_1,x_2)^2 \bigg|^p dx_2 \\
        &\lesssim_{p} D_p(E_{2i+i_0,i+i_0})^p \left( \frac{ N_{k+i_0}^{1/2}N_{2i+i_0}^{1/2}}{N_{i+i_0}} \right)^p \\
        &\qquad \bigg[ \sum_{J \in \cP_{2i+i_0}(I_1 \cap E_{2i+i_0} )} (\int_{\bR} |F_{J}(x_1,x_2)G(x_1,x_2)^2|^p dx_2 )^{2/p}\bigg]^{p/2}.
    \end{split}
    \end{equation}
    Assuming the claim \eqref{Lemsecdec} and using Lemma \ref{DecE_j,i}, we obtain $D_p(E_{2i+i_0,i+i_0}) \lesssim_p \widetilde{C}_p^{i}$ and by \eqref{nvcond4}, 
    \begin{equation} \label{Ndoublewi_0}
        \frac{N_{2i+i_0}^{1/2}N_{i_0}^{1/2}}{N_{i+i_0}} \leq \frac{N_{2i+2i_0}^{1/2} }{N_{i+i_0}} \leq B^{i+i_0}.
    \end{equation}
    Since $p \geq 2$, we apply Minkowski's inequality to \eqref{Lemsecdec} and obtain \eqref{keyest1FG}.\\
    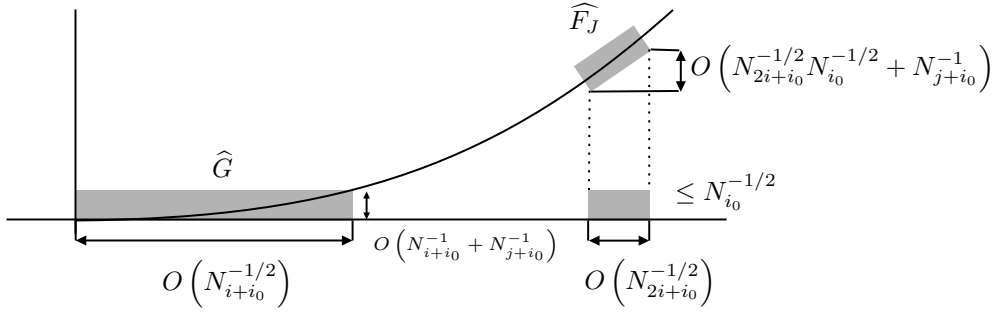
\begin{figure}
        \centering

\tikzset{every picture/.style={line width=0.75pt}} 

\begin{tikzpicture}[x=0.75pt,y=0.75pt,yscale=-1,xscale=1]

\draw    (163.45,14.87) -- (163.45,128.13) ;
\draw    (491.83,120.88) -- (128.7,120.88) ;
\draw  [color={rgb, 255:red, 0; green, 0; blue, 0 }  ,draw opacity=0 ][fill={rgb, 255:red, 0; green, 0; blue, 0 }  ,fill opacity=0.3 ] (163.54,121.37) -- (303.2,121.37) -- (303.2,105.92) -- (163.54,105.92) -- cycle ;
\draw    (163.54,121.37) -- (163.54,131.48) ;
\draw    (303.2,121.37) -- (303.2,131.48) ;
\draw    (422,121.44) -- (422,131.55) ;
\draw    (452.81,121.44) -- (452.81,131.55) ;
\draw    (166.54,131.48) -- (300.2,131.48) ;
\draw [shift={(303.2,131.48)}, rotate = 180] [fill={rgb, 255:red, 0; green, 0; blue, 0 }  ][line width=0.08]  [draw opacity=0] (5.36,-2.57) -- (0,0) -- (5.36,2.57) -- cycle    ;
\draw [shift={(163.54,131.48)}, rotate = 0] [fill={rgb, 255:red, 0; green, 0; blue, 0 }  ][line width=0.08]  [draw opacity=0] (5.36,-2.57) -- (0,0) -- (5.36,2.57) -- cycle    ;
\draw    (425.4,131.55) -- (449.81,131.55) ;
\draw [shift={(452.81,131.55)}, rotate = 180] [fill={rgb, 255:red, 0; green, 0; blue, 0 }  ][line width=0.08]  [draw opacity=0] (5.36,-2.57) -- (0,0) -- (5.36,2.57) -- cycle    ;
\draw [shift={(422.4,131.55)}, rotate = 0] [fill={rgb, 255:red, 0; green, 0; blue, 0 }  ][line width=0.08]  [draw opacity=0] (5.36,-2.57) -- (0,0) -- (5.36,2.57) -- cycle    ;
\draw    (453.54,35.06) -- (468,35.52) ;
\draw    (423.49,56.33) -- (467.63,56.01) ;
\draw    (468.73,53.47) -- (468.73,38.52) ;
\draw [shift={(468.73,35.52)}, rotate = 90] [fill={rgb, 255:red, 0; green, 0; blue, 0 }  ][line width=0.08]  [draw opacity=0] (5.36,-2.57) -- (0,0) -- (5.36,2.57) -- cycle    ;
\draw [shift={(468.73,56.47)}, rotate = 270] [fill={rgb, 255:red, 0; green, 0; blue, 0 }  ][line width=0.08]  [draw opacity=0] (5.36,-2.57) -- (0,0) -- (5.36,2.57) -- cycle    ;
\draw    (310.26,117.88) -- (310.26,109.72) ;
\draw [shift={(310.26,106.72)}, rotate = 90] [fill={rgb, 255:red, 0; green, 0; blue, 0 }  ][line width=0.08]  [draw opacity=0] (3.57,-1.72) -- (0,0) -- (3.57,1.72) -- cycle    ;
\draw [shift={(310.26,120.88)}, rotate = 270] [fill={rgb, 255:red, 0; green, 0; blue, 0 }  ][line width=0.08]  [draw opacity=0] (3.57,-1.72) -- (0,0) -- (3.57,1.72) -- cycle    ;
\draw    (163.54,121.37) .. controls (298.83,120.39) and (377.83,93.05) .. (464.8,15.12) ;
\draw  [dash pattern={on 0.84pt off 2.51pt}]  (422.76,55.83) -- (422.4,105.99) ;
\draw  [dash pattern={on 0.84pt off 2.51pt}]  (452.8,35.06) -- (452.8,105.99) ;
\draw  [color={rgb, 255:red, 0; green, 0; blue, 0 }  ,draw opacity=0 ][fill={rgb, 255:red, 0; green, 0; blue, 0 }  ,fill opacity=0.3 ] (422,121.44) -- (453.08,121.44) -- (453.08,105.99) -- (422,105.99) -- cycle ;
\draw  [color={rgb, 255:red, 0; green, 0; blue, 0 }  ,draw opacity=0 ][fill={rgb, 255:red, 0; green, 0; blue, 0 }  ,fill opacity=0.3 ] (423.49,56.33) -- (453.67,35.67) -- (444.95,22.92) -- (414.77,43.58) -- cycle ;

\draw (205.72,139.55) node [anchor=north west][inner sep=0.75pt]  [font=\small]  {$O\left( N_{i+i_{0}}^{-1/2}\right)$};
\draw (418.67,139.34) node [anchor=north west][inner sep=0.75pt]  [font=\small]  {$O\left( N_{2i+i_{0}}^{-1/2}\right)$};
\draw (464.47,95.54) node [anchor=north west][inner sep=0.75pt]  [font=\small]  {$\leq N_{i_{0}}^{-1/2}$};
\draw (472.17,31.14) node [anchor=north west][inner sep=0.75pt]  [font=\small]  {$O\left( N_{2i+i_{0}}^{-1/2} N_{i_{0}}^{-1/2} +N_{j+i_{0}}^{-1}\right)$};
\draw (230.82,84.89) node [anchor=north west][inner sep=0.75pt]  [font=\small]  {$\widehat{G}$};
\draw (410.8,10.09) node [anchor=north west][inner sep=0.75pt]  [font=\small]  {$\widehat{F_{J}}$};
\draw (312.26,124.28) node [anchor=north west][inner sep=0.75pt]  [font=\tiny]  {$O\left( N_{i+i_{0}}^{-1} +N_{j+i_{0}}^{-1}\right)$};

\end{tikzpicture}

        \caption{The supports of $\widehat{F_J}$ and $\widehat{G}$}
        \label{fig2}
    \end{figure}
    Now we prove the claim. Since $J \subseteq [0,N_{i_0}^{-1/2}]$, $\widehat{F}_{J}$ is supported in the horizontal strip
    \begin{equation*}
        \{(\xi_1, \xi_2) : \xi_2 = \gamma_{J}^2 + O(N_{2i+i_0}^{-1/2} N_{i_0}^{-1/2}+N_{j+i_0}^{-1}) \}
    \end{equation*}
    where $\gamma_{J}$ is the center of $J$ and since $J_2 \subseteq [0, N_{i+i_0}^{-1/2}]$, $\widehat{G}$ is supported on an $O(N_{i+i_0}^{-1/2}) \times O(N_{i+i_0}^{-1}+N_{j+i_0}^{-1})$ rectangle centered at the origin, see Figure \ref{fig2}. Since $2i \leq j$, we have $N_{j+i_0}^{-1} \leq N_{i+i_0}^{-1}$ and also note that $N_{2i+i_0}^{-1/2} N_{i_0}^{-1/2} \leq N_{i+i_0}^{-1}$. Therefore, $\widehat{F_{J}G^2}$ is supported in the horizontal strip
    \begin{equation*}
        \{(\xi_1, \xi_2 ) : \xi_2 = \gamma_{J}^2 + O(N_{i+i_0}^{-1}) \}.
    \end{equation*}
    The Fourier transform of $F_{J}G^2$ in $y$ for fixed $x$ is also supported on an interval of length $O(N_{i+i_0}^{-1})$ centered at $\gamma_{J}^2$.
    
    Let $c \approx N_{k+i_0,i_0}^{1/2}N_{2i+i_0}^{1/2}N_{i+i_0}^{-1} $ and $cJ$ be the interval of length $c|J|$ which has the same center with $J$. Since $I_1$ is the interval of length $N_{i+i_0}^{-1/2}$, we have
    \begin{equation}\label{LemLindecsca}
    \begin{split}
        &\norm{\sum_{J \in \cP_{2i+i_0}(I_1 \cap E_{2i+i_0})} f_{cJ}}_{L^p(\bR)} \lesssim \\
        &D_p(E_{2i+i_0,i+i_0}) \left( \frac{N_{k+i_0}^{1/2}N_{2i+i_0}^{1/2}}{N_{i+i_0}}\right) \left(\sum_{J \in \cP_{2i+i_0}(I_1 \cap E_{2i+i_0})} \norm{f_{cJ}}_{L^p(\bR)}^2  \right)^{1/2}.
    \end{split}
    \end{equation}
    Let $\gamma_m$ be the left endpoint of $I_1 $ and let us consider 
    \begin{equation*}
        T(x) = (2\gamma_m+N_{i+i_0}^{-1/2})(x-\gamma_m)+  \gamma_m^2.
    \end{equation*}
    Note that $T(x)$ is an equation of a line which passes through $(\gamma_m, \gamma_m^2)$ and $(\gamma_m+N_{i+i_0}^{-1/2},(\gamma_m+N_{i+i_0}^{-1/2})^2) $. Since $\gamma_m \geq N_{k+i_0}^{-1/2}$ and $|\gamma_J - \gamma_m| \leq |I_1| = N_{i+i_0}^{-1/2}$, 
    \begin{equation*}
        \begin{split}
            T^{-1}(\gamma_J^2 + O(N_{i+i_0}^{-1})) &=\gamma_J + \frac{(\gamma_J - \gamma_m) (\gamma_J - \gamma_m - N_{i+i_0}^{-1/2})}{2\gamma_m + N_{i+i_0}^{-1/2}}+O(N_{k+i_0}^{1/2} N_{i+i_0}^{-1})\\
            &= \gamma_J + O(N_{k+i_0}^{1/2}N_{i+i_0}^{-1}).
        \end{split}
    \end{equation*}
    
    We use $|J| = N_{2i+i_0}^{-1/2}$ and obtain that $\gamma_J^2+ O(N_{i+i_0}^{-1}) \subseteq T(cJ) $ if $c$ is sufficiently large while still comparable to $N_{k+i_0,i_0}^{1/2}N_{2i+i_0}^{1/2}N_{i+i_0}^{-1}$. The intervals $cJ$ in \eqref{LemLindecsca} can be replaced by $T(cJ)$ and we obtain \eqref{Lemsecdec}.
    
    Now, we turn to the proof of \eqref{keyest}. Similarly, we fix an arbitrary $a \in A_{i_0}$. We only need to consider $f $ and $g$ such that 
    \begin{equation*}
        \sum_{J_1\in \cP_{j+i_0( I_1 \cap E_{j+i_0}) }} \norm{f_{\Omega_{J_1}}}_{L^{3p}(\bR^2)}^2 =\sum_{J_2\in \cP_{j+i_0( I_2 \cap E_{j+i_0}) }} \norm{g_{\Omega_{J_2}}}_{L^{3p}(\bR^2)}^2 = 1.
    \end{equation*}
    We can assume that $I_2:=[0, N_{2i+i_0}^{-1/2}]$ is on the left of an interval $I_1$ of length $N_{i+i_0}^{-1/2}$ and $I_1,I_2 \subseteq [0,N_{i_0}^{-1/2}]$. For each $J\in P_{4i + i_0}(I_1 \cap E_{4i+i_0})$, the center of $J$ is a distance $\geq N_{k+i_0}^{-1/2}$ away from the origin. Let 
    \begin{equation*}
        F_{J} :=  \sum_{J_1 \in \cP_{j+i_0}(J \cap E_{j+i_0})} f_{\Omega_{J_1}}
    \end{equation*}
    and 
    \begin{equation*}
        G:=\sum_{J_2 \in \cP_{j+i_0}([0,N_{2i+i_0}^{-1/2}] \cap E_{j+i_0})} g_{\Omega_{J_2}}.   
    \end{equation*}
    By the same argument in the proof of \eqref{keyest1}, it suffices to prove that, for any fixed $x_1 \in \bR$,
    \begin{equation}\label{Lemsecdec1}
    \begin{split}
        &\int_{\bR} \bigg|\sum_{J \in \cP_{4i+i_0}(I_1 \cap E_{4i+i_0} )}  F_{J}(x_1,x_2) G(x_1,x_2)^2 \bigg|^p dx_2 \\
        &\lesssim_{p}  D_p(E_{4i+i_0,2i+i_0})^p D_p(E_{2i+i_0,i+i_0})^p \left( \frac{N_{k+i_0}^{1/2}N_{4i+i_0}^{1/2}}{N_{2i+i_0}} \right)^p\left( \frac{N_{k+i_0}^{1/2}N_{2i+i_0}^{1/2}}{N_{i+i_0}} \right)^p \\
        &\qquad \bigg[ \sum_{J \in \cP_{4i+i_0}(I_1 \cap E_{4i+i_0} )} (\int_{\bR} |F_{J}(x_1,x_2)G(x_1,x_2)^2|^p dx_2 )^{2/p}\bigg]^{p/2},
    \end{split}
    \end{equation}
    which corresponds to \eqref{Lemsecdec}. By Lemma \ref{DecE_j,i} and using \eqref{Ndoublewi_0} twice, we obtain \eqref{keyest}.
    
    We now prove \eqref{Lemsecdec1}. For $J_0 \in \cP_{2i+i_0}(I_1 \cap E_{2i+i_0})$, let
    \begin{equation*}
        F_{J_0} = \sum_{J \in \cP_{4i+i_0}(J_0 \cap E_{4i+i_0})} F_J,
    \end{equation*}
    so that
    \begin{equation*}
        \sum_{J \in \cP_{4i+i_0}(I_1 \cap E_{4i+i_0})} F_J = \sum_{J_0 \in \cP_{2i+i_0}(I_1 \cap E_{2i+i_0})} F_{J_0}.
    \end{equation*}
    Since $J_0 \subseteq [0, N_{i_0}^{-1/2}]$, $\widehat{F}_{J_0}$ is supported in the horizontal strip
    \begin{equation*}
        \{(\xi_1, \xi_2) : \xi_2 = \gamma_{J_0}^2 + O(N_{2i+i_0}^{-1/2}N_{i_0}^{-1/2} +N_{j+i_0}^{-1}) \}
    \end{equation*}
    where $\gamma_{J_0}$ is the center of $J_0$ and since $J_2 \subseteq [0, N_{2i+i_0}^{-1/2}]$, $\widehat{G}$ is supported on an $O(N_{2i+i_0}^{-1/2}) \times O(N_{2i+i_0}^{-1}+N_{j+i_0}^{-1})$ rectangle. Then, $\widehat{F_{J_0}G^2}$ is supported in the horizontal strip
    \begin{equation*}
        \{(\xi_1, \xi_2 ) : \xi_2 = \gamma_{J_0}^2 + O(N_{i+i_0}^{-1}) \}.
    \end{equation*}
    By the same argument in the proof of \eqref{Lemsecdec}, we obtain that
    \begin{equation}\label{Lemsecdec2}
    \begin{split}
        &\int_{\bR} \bigg|\sum_{J \in \cP_{4i+i_0}(I_1 \cap E_{j+i_0} )} F_{J}(x_1,x_2) G(x_1,x_2)^2 \bigg|^p dx_2 \\
        &=\int_{\bR} \bigg|\sum_{J_0 \in \cP_{2i+i_0}(I_1 \cap E_{2i+i_0} )} F_{J_0}(x_1,x_2) G(x_1,x_2)^2 \bigg|^p dx_2 \\
        &\lesssim_{p}  D_p(E_{2i+i_0,i+i_0})^p \left( \frac{N_{k+i_0}^{1/2}N_{2i+i_0}^{1/2}}{N_{i+i_0}} \right)^p\\
        &\qquad\bigg[ \sum_{J_0 \in \cP_{2i+i_0}(I_1 \cap E_{2i+i_0} )} (\int_{\bR} |F_{J_0}(x_1,x_2)G(x_1,x_2)^2|^p dx_2 )^{2/p}\bigg]^{p/2}.
    \end{split}
    \end{equation}
    Similarly, $\widehat{F}_{J}$ is supported in the horizontal strip
    \begin{equation*}
        \{(\xi_1, \xi_2) : \xi_2 = \gamma_{J}^2 + O(N_{4i+i_0}^{-1/2}N_{i_0}^{-1/2} +N_{j+i_0}^{-1}) \}
    \end{equation*}
    where $\gamma_{J}$ is the center of $J$. Since $4i \leq j$, Fourier transform of $F_{J}G^2$ in $y$ for fixed $x$ is supported in the horizontal strip
    \begin{equation*}
        \{ (\xi_1,\xi_2): \xi_2 = \gamma_J^2 + O(N_{2i+i_0}^{-1}) \}.
    \end{equation*}
    We use the argument in the proof of \eqref{Lemsecdec} with $i $ replaced by $2i$ and obtain
    \begin{equation}\label{Lemsecdec3}
    \begin{split}
        &\int_{\bR} \bigg|\sum_{J \in \cP_{4i+i_0}(J_0 \cap E_{4i+i_0} )} F_{J}(x_1,x_2) G(x_1,x_2)^2 \bigg|^p dx_2 \\
        &\lesssim_{p} D_p(E_{4i+i_0,2i+i_0})^p \left( \frac{N_{k+i_0}^{1/2}N_{4i+i_0}^{1/2}}{N_{2i+i_0}} \right)^p\\
        &\qquad \bigg[ \sum_{J \in \cP_{4i+i_0}(J_0  \cap E_{4i+i_0})} (\int_{\bR} |F_{J}(x_1,x_2)G(x_1,x_2)^2|^p dx_2 )^{2/p}\bigg]^{p/2}.
    \end{split}
    \end{equation}
    Combining \eqref{Lemsecdec2} and \eqref{Lemsecdec3}, we get \eqref{Lemsecdec1}.
\end{proof}

\begin{lem}\label{Decsplit}
    Let $k \leq i_2 \leq i_1 \leq j$. Then,
    \begin{equation*}
        M_{p,i_0}(j,k,i_1,i_2) \leq M_{p,i_0}(j,k,i_2,i_1)^{1/2} D_{3p}(P_{j+i_0,i_2+i_0})^{1/2}.
    \end{equation*}
\end{lem}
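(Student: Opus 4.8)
The plan is to deduce the inequality from a single application of the Cauchy--Schwarz inequality in $L^2(\bR^2)$, after which everything follows by unwinding the definitions of $M_{p,i_0}$ and of the decoupling constant $D_{3p}(P_{j+i_0,i_2+i_0})$. Fix $a\in A_{i_0}$, intervals $I_1\in\cP_{i_1+i_0}(E_{i_1+i_0,i_0,a})$ and $I_2\in\cP_{i_2+i_0}(E_{i_2+i_0,i_0,a})$ with $d(I_1,I_2)\ge N_{k+i_0}^{-1/2}$, and functions $f,g$ with $\supp\widehat f,\supp\widehat g\subseteq P_{j+i_0,i_0,a}$. Write $F=\sum_{J_1\in\cP_{j+i_0}(I_1\cap E_{j+i_0})}f_{\Omega_{J_1}}$ and $G=\sum_{J_2\in\cP_{j+i_0}(I_2\cap E_{j+i_0})}g_{\Omega_{J_2}}$, and abbreviate the two sums over $J_1$ and over $J_2$ accordingly. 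The quantity to estimate is $\int_{\bR^2}|F|^p|G|^{2p}$.

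The key step is to split $|F|^p|G|^{2p}=\bigl(|F|^p|G|^{p/2}\bigr)\cdot|G|^{3p/2}$ and apply Cauchy--Schwarz:
\[\int_{\bR^2}|F|^p|G|^{2p}\le\Bigl(\int_{\bR^2}|F|^{2p}|G|^p\Bigr)^{1/2}\Bigl(\int_{\bR^2}|G|^{3p}\Bigr)^{1/2}.\]
For the first factor, note that $\int_{\bR^2}|F|^{2p}|G|^p=\int_{\bR^2}|G|^p|F|^{2p}$ is precisely the bilinear expression controlled by $M_{p,i_0}(j,k,i_2,i_1,a)$: the power-$p$ factor $G$ now plays the role of the first function and sits over the level-$(i_2+i_0)$ interval $I_2$, the power-$2p$ factor $F$ plays the role of the second function over the level-$(i_1+i_0)$ interval $I_1$, and the separation $d(I_2,I_1)\ge N_{k+i_0}^{-1/2}$ is unchanged. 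Hence
\[\int_{\bR^2}|F|^{2p}|G|^p\le M_{p,i_0}(j,k,i_2,i_1,a)^{3p}\Bigl(\sum_{J_2}\norm{g_{\Omega_{J_2}}}_{L^{3p}(\bR^2)}^2\Bigr)^{p/2}\Bigl(\sum_{J_1}\norm{f_{\Omega_{J_1}}}_{L^{3p}(\bR^2)}^2\Bigr)^{p}.\]
For the second factor, since $I_2=N_{i_2+i_0}^{-1/2}(b+[0,1])$ for some $b\in A_{i_2+i_0}$ we have $\cP_{j+i_0}(I_2\cap E_{j+i_0})=\cP_{j+i_0}(E_{j+i_0,i_2+i_0,b})$ and $\supp\widehat G\subseteq P_{j+i_0,i_2+i_0,b}$, so $G$ is an admissible test function for the decoupling inequality defining $D_{3p}(P_{j+i_0,i_2+i_0,b})\le D_{3p}(P_{j+i_0,i_2+i_0})$, which gives
\[\int_{\bR^2}|G|^{3p}\le D_{3p}(P_{j+i_0,i_2+i_0})^{3p}\Bigl(\sum_{J_2}\norm{g_{\Omega_{J_2}}}_{L^{3p}(\bR^2)}^2\Bigr)^{3p/2}.\]

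Multiplying the square roots, the $J_2$-sum collects the exponent $\tfrac p4+\tfrac{3p}4=p$ and the $J_1$-sum appears with exponent $\tfrac p2$, which is exactly the normalization in the definition of $M_{p,i_0}(j,k,i_1,i_2,a)$. Reading off the constant yields $M_{p,i_0}(j,k,i_1,i_2,a)\le M_{p,i_0}(j,k,i_2,i_1,a)^{1/2}D_{3p}(P_{j+i_0,i_2+i_0})^{1/2}$, and taking the supremum over $a\in A_{i_0}$ (the decoupling constant being independent of $a$) gives the claim. There is no genuine obstacle in this argument; the only thing requiring care is the bookkeeping — matching the exponents $p$ and $2p$ to the correct interval levels after the swap $i_1\leftrightarrow i_2$, and verifying that $G$ really satisfies the hypotheses of the decoupling inequality for $P_{j+i_0,i_2+i_0,b}$.
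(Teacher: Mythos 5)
Your proof is correct and takes exactly the same route as the paper: a single application of Cauchy--Schwarz in the form $\int F^p G^{2p}\le\bigl(\int F^{2p}G^p\bigr)^{1/2}\bigl(\int G^{3p}\bigr)^{1/2}$, followed by reading off the definitions of $M_{p,i_0}(j,k,i_2,i_1)$ (with the roles of the two intervals swapped) and of $D_{3p}(P_{j+i_0,i_2+i_0})$ applied to $G$. The paper states this in one line; you have correctly unpacked the exponent bookkeeping and the verification that $G$ is an admissible test function for the decoupling constant.
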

\begin{proof}
    As in \cite{CDGJLM22}, it follows from  $\int F^{p} G^{2p} dx \leq (\int F^{2p} G^{p})^{1/2} (\int G^{3p})^{1/2}$ where $F$ and $G$ are nonnegative functions and the definition of $M_{p,i_0}(j,k,i_1,i_2)$.
\end{proof}
We now can prove Proposition \ref{DecP_k}.
\begin{proof}[Proof of Proposition \ref{DecP_k}]
    Let $p=2/\alpha$ and assume that $\lambda$ is the smallest exponent such that
    \begin{equation}\label{Declambda}
        D_{3p}(P_{j+i_0,i_0}) \lesssim_{p,\epsilon} N_{j+i_0}^\epsilon N_{j+i_0,i_0}^{\lambda}
    \end{equation}
    for all $0 \leq i_0 ,j$ and for any $0 < \epsilon < 1$. Then, it suffices to show that $\lambda=0$. We can run the iteration as in \cite{CDGJLM22}. But, we should also consider that $n_j$ is nondecreasing. There is a trivial estimate $D_{3p}(P_{j+i_0,i_0}) \lesssim N_{j+i_0,i_0}^{1/4}$, so
    we assume that $0 < \lambda \leq 1/4$ toward a contradiction.
    
    By \eqref{nvcond2}, for any $i_0 \geq 0$ we have
    \begin{equation}\label{tildeC_pB}
        (\widetilde{C}_pB)^{i/3} \lesssim_{p} N_{i} \leq N_{i+i_0,i_0}.
    \end{equation}
    Combining Lemma \ref{Decbil}, Lemma \ref{Deckeylem} and \eqref{tildeC_pB}, we obtain that if $j \geq 2i$, then
    \begin{equation}\label{prop3.5st1}
    \begin{split}
        D_{3p}(P_{j+i_0,i_0})&\leq D_{3p}(P_{j+i_0,i+i_0})+N_{i+i_0,i_0}^{O(1)}  (\widetilde{C}_p B)^{i/3} B^{i_0/3} M_{p,i_0}(j,i,2i,i)\\
        &\leq D_{3p}(P_{j+i_0,i+i_0})+N_{i+i_0,i_0}^{O(1)} B^{i_0/3} M_{p,i_0}(j,i,2i,i).
    \end{split}
    \end{equation}
    Now we need an estimate for $M_{p,i_0}(j,i,2i,i)$. By Lemma \ref{Deckeylem} and Lemma \ref{Decsplit}, for any positive integer $a$ such that $1 \leq a \leq \frac{j}{4i}$, we obtain that
    \begin{equation}\label{iter}
    \begin{split}
        M_{p,i_0}(j,i,2ai,ai) &\leq  M_{p,i_0}(j,i,ai,2ai)^{1/2} D_{3p}(P_{j+i_0,ai+i_0})^{1/2}\\
        &\lesssim_{p} N_{i+i_0,i_0}^{O(1)}(\widetilde{C}_p B)^{ai/2} B^{i_0/3} M_{p,i_0}(j,i,4ai,2ai)^{1/2} D_{3p}(P_{j+i_0,ai+i_0})^{1/2} .
    \end{split}
    \end{equation}
    First, we assume that $j = 2^{k+1}i$ where $k $ is a sufficiently integer to be determined later. By iterating \eqref{iter} and using \eqref{Declambda} with $\epsilon$ replaced by $\epsilon/2$, we have
    \begin{equation*}
        \begin{split}
            M_{p,i_0}(j,i,2i,i)\lesssim_{p,\epsilon} &N_{i+i_0,i_0}^{O(1)} (\widetilde{C}_pB)^{ki/2} B^{2i_0/3}\\
            &N_{j+i_0}^{(\lambda+\epsilon/2)(1-1/2^{k})} (N_{i+i_0} N_{2i+i_0}^{1/2} \cdots N_{2^{k-1}i+i_0}^{1/2^{k-1}})^{-\lambda/2} M_{p,i_0}(j,i,2^{k+1}i, 2^k i)^{1/2^{k}}.
        \end{split}
    \end{equation*}
    We use \eqref{multi-lin_dec} and \eqref{Declambda} with $\epsilon $ replaced by $\epsilon /2$ and we get
    \begin{equation*}
        M_{p,i_0}(j,i,2^{k+1}i, 2^ki) \leq D_{3p}(P_{j+i_0,2^{k}i+i_0}) \lesssim_{p,\epsilon}  N_{j+i_0}^{\epsilon/2} N_{j+i_0,{2^ki+i_0}}^{\lambda}.
    \end{equation*}
    Therefore, we arrive at
    \begin{equation*}
    \begin{split}
        M_{p,i_0}(j,i,2i,i)\lesssim_{p,\epsilon} & N_{j+i_0}^{\epsilon/2} N_{j+i_0,i_0}^\lambda N_{i+i_0,i_0}^{O(1)}  (\widetilde{C}_pB)^{ki/2} B^{2i_0/3}\\
        &\left(\frac{N_{i_0}}{N_{i+i_0}^{1/2} N_{2i+i_0}^{1/4} \cdots N_{2^{k-1}i+i_0}^{1/2^k} N_{2^ki+i_0}^{1/2^k}  }\right)^{\lambda}.
    \end{split}
    \end{equation*}
    Since $n_j$ is nondecreasing, we obtain that for any $s \geq 0$,
    \begin{equation*}
        \left(\frac{N_{i_0}}{N_{2^s i+i_0}}\right)^{1/2^s} \leq \frac{N_{i_0}}{N_{i+i_0}}.
    \end{equation*}
    Therefore, we have
    \begin{equation*}
        \left(\frac{N_{i_0}}{N_{i+i_0}^{1/2} N_{2i+i_0}^{1/4} \cdots N_{2^{k-1}i+i_0}^{1/2^{k}} N_{2^ki+i_0}^{1/ 2^k} }\right) \leq N_{i+i_0,i_0}^{-(k+1)/2} ,
    \end{equation*}
    and it leads to
    \begin{equation*}
    \begin{split}
        M_p(j,i,2i,i)&\lesssim_{p,\epsilon}  N_{j+i_0}^{\epsilon/2} N_{j+i_0,i_0}^\lambda N_{i+i_0,i_0}^{O(1)-\lambda (k+1)/2}   (\widetilde{C}_pB)^{ki/2} B^{2i_0/3}.
    \end{split}
    \end{equation*}
    Since $j=2^{k+1}i$, $ki \leq j$ for any $k\geq 0$. By using \eqref{nvcond2}, we have
    \begin{equation}\label{C_pBki}
        (\widetilde{C_p}B)^{ki/2} \leq (\widetilde{C_p}B)^{j/2} \lesssim_{p,\epsilon} N_j^{\epsilon/4} \leq N_{j+i_0}^{\epsilon/4}.
    \end{equation}
    Since $\lambda \leq 1/4$, $N_{j+i_0,i_0}^\lambda \leq N_{j+i_0,i+i_0}^\lambda N_{i+i_0,i_0}^{O(1)}$ and by \eqref{C_pBki}, we get
    \begin{equation}\label{M_p(j,i,2i,i)}
        M_p(j,i,2i,i) \lesssim_{p,\epsilon}  N_{j+i_0}^{3\epsilon/4} N_{j+i_0,i+i_0}^\lambda N_{i+i_0,i_0}^{O(1)-\lambda (k+1)/2}  B^{2i_0/3}.
    \end{equation}
    We choose $k$ such that $O(1)-\lambda (k+1)/2 <0$ and $k \approx \lambda^{-1}$, then it follows from \eqref{Declambda} with $\epsilon$ replaced by $3\epsilon/4$, \eqref{prop3.5st1} and \eqref{M_p(j,i,2i,i)} that
    \begin{equation*}
        \begin{split}
            D_{3p}(P_{j+i_0,i_0}) &\leq D_{3p}(P_{j+i_0,i+i_0})+N_{i+i_0,i_0}^{O(1)} B^{i_0/3} M_{p,i_0}(j,i,2i,i)\\
            &\lesssim_{p,\epsilon} N_{j+i_0}^{3\epsilon/4} N_{j+i_0,i+i_0}^\lambda + N_{j+i_0}^{3\epsilon/4} N_{j+i_0,i+i_0}^\lambda N_{i+i_0,i_0}^{O(1)-\lambda (k+1)/2} B^{i_0} \\ 
            &\lesssim_{p,\epsilon} N_{j+i_0}^{3\epsilon/4} N_{j+i_0,i+i_0}^\lambda B^{i_0}.
        \end{split}
    \end{equation*}
    Since $j= 2^{k+1}i$, we use \eqref{Ndoublewi_0} at all scales $i,2i, \cdots, 2^k i$ and obtain that
    \begin{equation*}
        N_{i+i_0}^{-1} \leq B^{i+i_0}N_{2i+i_0}^{-1/2}N_{i_0}^{-1/2}  \leq \cdots \leq B^{i(k+1) +2i_0} N_{j+i_0}^{-1/2^{k+1}} N_{i_0}^{-1+1/2^{k+1}}.
    \end{equation*}
    Therefore, we have
    \begin{equation*}
        D_{3p}(P_{j+i_0,i_0}) \lesssim_{p,\epsilon} N_{j+i_0}^{3\epsilon /4}( { N_{j+i_0,i_0}})^{\lambda(1-1/2^{k+1})}  B^{i(k+1)\lambda} B^{i_0(2\lambda+1)}.
    \end{equation*}
    Since $\lambda \leq 1/4$ and $ki \leq j$, by using \eqref{nvcond2}, 
    \begin{equation*}
        B^{i(k+1)\lambda} B^{i_0(2\lambda+1)} \leq B^{2(j+i_0)} \lesssim_\epsilon N_{j+i_0}^{\epsilon/4}.
    \end{equation*}
    Now, we obtain that
    \begin{equation}\label{DecDforj2k}
        D_{3p}(P_{j+i_0,i_0}) \lesssim_{p,\epsilon} N_{j+i_0}^\epsilon N_{j+i_0,i_0}^{\lambda(1-1/2^{k+1}) }.
    \end{equation}
    
    If $2^{k+1}i \leq j \leq 2^{k+1}(i+1)$, it follows from Lemma \ref{DecAM} that
    \begin{equation*}
        D_{3p}(P_{j+i_0,i_0}) \leq D_{3p}(P_{2^{k+1}i+i_0, i_0})D_{3p}(P_{j+i_0,2^{k+1}i+i_0}).
    \end{equation*}
    By \eqref{DecDforj2k} with $\epsilon$ replaced by $\epsilon/2$ and the trivial estimate,
    \begin{equation*}
    \begin{split}
        D_{3p}(P_{j+i_0,i_0}) &\lesssim_{p,\epsilon} N_{2^{k+1}i+i_0}^{\epsilon/2} N_{2^{k+1}i+i_0,i_0}^{\lambda(1-1/2^{k+1})} N_{j+i_0,2^{k+1}i+i_0}^{1/4}\\
        &\leq N_{j+i_0}^{\epsilon/2} N_{j+i_0,i_0}^{\lambda(1-1/2^{k+1})} N_{j+i_0,2^{k+1}i+i_0}^{1/4}.
    \end{split}
    \end{equation*}
    By \eqref{nvcond3}, there exists a constant $C_{\lambda,\epsilon}$ such that
    \begin{equation*}
        n_{j+i_0} \leq n_{j+i_0+1} \leq C_{\lambda,\epsilon} N_{j+i_0}^{\epsilon / 2^{k}}.
    \end{equation*}
    Since $k \approx \lambda^{-1}$, the constant $C_{\lambda,\epsilon}$ depends on $\lambda$. Then, we have
    \begin{equation*}
        \begin{split}
             N_{j+i_0,2^{k+1}i+i_0}^{1/4} \leq (n_{j+i_0})^{2^{k-1}} \leq C_{\lambda,\epsilon}^{2^{k-1}} N_{j+i_0}^{\epsilon/2} \lesssim_{\lambda,\epsilon} N_{j+i_0}^{\epsilon/2}.
        \end{split}
    \end{equation*}
    Therefore, we obtain that for any $j,i_0 \geq 0$,
    \begin{equation*}
        D_{3p}(P_{j+i_0,i_0}) \lesssim_{p,\epsilon, \lambda } N_{j+i_0}^{\epsilon} N_{j+i_0,i_0}^{\lambda(1-1/2^{k+1})}. 
    \end{equation*}
    This contradicts the assumption that $\lambda>0$ is the smallest exponent which satisfies \eqref{Declambda}. Therefore, $\lambda =0$.
\end{proof}

\section{Local restriction estimate}\label{secLR}
We denote $B^d(R)$ by a cube of side length $R$ in $ \bR^d$ centered at the origin.
\begin{prop}\label{PropLRresult}
    Let $p= 6/\alpha$ where $0 < \alpha <1$ and $\nu$ be the measure constructed in Section \ref{secCan}. Then, we have
    \begin{equation}\label{eqLRpropRes}
        \norm{\widehat{fd\nu}}_{L^{p}(B^2(R))}\lesssim_{p,\epsilon} R^\epsilon \norm{f}_{L^2(\nu)} \qquad \forall f \in L^2(\nu).
    \end{equation}
    Equivalently, for $q$ such that $1/q + 1/p=1$, we have
    \begin{equation}\label{coreqLR}
        \norm{\widehat{f}}_{L^2(\nu)} \lesssim_{q,\epsilon} R^\epsilon \norm{f}_{L^{q} (\bR^2)}
    \end{equation}
    for any $f \in L^q(\bR^2)$ supported on $B^2(R)$.
\end{prop}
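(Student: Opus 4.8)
The plan is to establish \eqref{eqLRpropRes}; the equivalence with \eqref{coreqLR} is just duality, so I focus on the former. Fix $p=6/\alpha$ and choose $j$ with $N_j\approx R$, so that $E_\infty\subseteq E_j$ is covered by the intervals $I\in\cP_j(E_j)$, each of length $N_j^{-1/2}\approx R^{-1/2}$, and $P_\infty$ lies in $\bigcup_I\Omega_I$, a boundedly overlapping union of parallelograms of dimensions $\approx R^{-1/2}\times R^{-1}$. Pick a Schwartz function $\phi_R$ with $\phi_R\gtrsim 1$ on $B^2(R)$ whose inverse Fourier transform is supported in $B^2(1/R)$. Then $\widehat{fd\nu}\,\phi_R$ has Fourier support inside a bounded dilate of $\bigcup_I\Omega_I$, so the decoupling inequality of Proposition \ref{DecP_k}, $D_p(P_j)\lesssim_{\alpha,\epsilon}N_j^\epsilon\approx R^\epsilon$, gives
\begin{equation*}
\|\widehat{fd\nu}\|_{L^p(B^2(R))}\lesssim\|\widehat{fd\nu}\,\phi_R\|_{L^p(\bR^2)}\lesssim_{\alpha,\epsilon}R^\epsilon\Big(\sum_I\|\widehat{fd\nu_I}\,\phi_R\|_{L^p(\bR^2)}^2\Big)^{1/2},\qquad \nu_I:=\nu|_{\Omega_I}.
\end{equation*}
Hence everything reduces to the single-tile estimate $\|\widehat{fd\nu_I}\,\phi_R\|_{L^p(\bR^2)}\lesssim_{\alpha,\epsilon}R^\epsilon\|f\|_{L^2(\nu_I)}$ uniformly in $I$, because summing $\|f\|_{L^2(\nu_I)}^2$ recovers $\|f\|_{L^2(\nu)}^2$.

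For a single tile I use the parabolic rescaling adapted to $\Omega_I$. Writing $x_1=x_I+N_j^{-1/2}u$ for $x_1\in I$ (with $x_I\in I$) and setting $\eta_1=N_j^{-1/2}(\xi_1+2x_I\xi_2)$, $\eta_2=N_j^{-1}\xi_2$, one checks that $\widehat{fd\nu_I}(\xi)=\theta_I(\xi)\,\mu(I)\,\widehat{g\,d\widetilde\nu_I}(\eta)$ with $|\theta_I|\equiv1$, where $\widetilde\nu_I$ is a probability measure on the graph $\{(u,u^2)\}$ whose projection $\widetilde\mu_I$ to the $u$-axis is a rescaled copy of the level-$j$ descendant of $E_\infty$ over $I$ — a Cantor set of the same type, built from $\Lambda(2/\alpha)$-sets and inheriting (analogues of) \eqref{nvcond1}--\eqref{nvcond4} — and $g$ is $f$ transported by the rescaling, so $\|g\|_{L^2(\widetilde\mu_I)}=\mu(I)^{-1/2}\|f\|_{L^2(\nu_I)}$. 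As $\xi$ ranges over $B^2(R)$ the variable $\eta$ ranges over a subset of $B^1(R^{1/2})\times B^1(C)$, and the change of variables carries the Jacobian $N_j^{3/2}$. The crucial gain is that the second factor is bounded: it is a full period for the $\eta_2$-frequencies, so no power of $R$ is lost in that direction — this is exactly what goes wrong if one periodizes $\widehat{fd\nu}$ directly at scale $R$. Undoing the rescaling, the single-tile estimate reduces to
\begin{equation*}
\|\widehat{g\,d\widetilde\nu_I}\|_{L^p(B^1(R^{1/2})\times B^1(C))}\lesssim_{\alpha,\epsilon}R^\epsilon\|g\|_{L^2(\widetilde\mu_I)},
\end{equation*}
after which $\|\widehat{fd\nu_I}\|_{L^p(B^2(R))}=\mu(I)\,N_j^{\alpha/4}\,\|\widehat{g\,d\widetilde\nu_I}\|_{L^p}\lesssim_{\alpha,\epsilon}R^\epsilon\,\mu(I)^{1/2}\,N_j^{\alpha/4}\,\|f\|_{L^2(\nu_I)}$, and since $\mu(I)\lesssim_\epsilon N_j^{-\alpha/2+\epsilon}$ by Lemma \ref{CanDim} while $N_j\approx R$, the prefactor $\mu(I)^{1/2}N_j^{\alpha/4}$ is $\lesssim_\epsilon R^\epsilon$. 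It is precisely the choice $p=6/\alpha$ — the critical exponent — that makes this balance. (The passages from $B^2(R)$ to $B^1(R^{1/2})\times B^1(C)$ and from $\|\widehat{fd\nu_I}\,\phi_R\|_{L^p(\bR^2)}$ back to $\|\widehat{fd\nu_I}\|_{L^p(B^2(R))}$ are routine tail arguments using the rapid decay of $\phi_R$; I will carry the weighted versions throughout.)

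It remains to prove the displayed estimate for $\widehat{g\,d\widetilde\nu_I}$ on the anisotropic box $B^1(R^{1/2})\times B^1(C)$. For each fixed $\eta_2$ the slice $\widehat{g\,d\widetilde\nu_I}(\cdot,\eta_2)$ is the one-dimensional extension of the measure $g\,e(-(\cdot)^2\eta_2)\,d\widetilde\mu_I$, so the one-dimensional local restriction estimate for $\widetilde\mu_I$ at exponent $2/\alpha$ — which follows, with no anisotropy issue, from $D_{2/\alpha}(E_{j'})\lesssim_\epsilon\widetilde C_{2/\alpha}^{\,j'}\lesssim_\epsilon N_{j'}^\epsilon$ (Lemma \ref{DecE_j,i} together with \eqref{nvcond2}) by the transference argument of {\L}aba--Wang \cite{LW18}, applied to the descendant Cantor set — gives $\|\widehat{g\,d\widetilde\nu_I}(\cdot,\eta_2)\|_{L^{2/\alpha}(B^1(R^{1/2}))}\lesssim_{\alpha,\epsilon}R^\epsilon\|g\|_{L^2(\widetilde\mu_I)}$ uniformly in $\eta_2$. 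Interpolating this with the trivial bound $\|\widehat{g\,d\widetilde\nu_I}(\cdot,\eta_2)\|_{L^\infty}\le\|g\|_{L^1(\widetilde\mu_I)}\le\|g\|_{L^2(\widetilde\mu_I)}$ raises the exponent in $\eta_1$ from $2/\alpha$ to $6/\alpha$, and — since $\eta_2$ runs over a bounded interval — the resulting mixed-norm bound $\|\widehat{g\,d\widetilde\nu_I}\|_{L^\infty_{\eta_2}L^{6/\alpha}_{\eta_1}}\lesssim_{\alpha,\epsilon}R^\epsilon\|g\|_{L^2(\widetilde\mu_I)}$ dominates the honest $L^{6/\alpha}$ norm over the box. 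Organizing this combination of the sharp $\Lambda(p)$-based estimate in the long direction with the trivial one in the short direction is precisely where the mixed-norm interpolation of Benedek--Panzone \cite{BP61} is invoked.

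The main obstacle is the step where this departs from \cite{LW18}: there the relevant dual objects are cubes, so a single periodization transfers the discrete restriction estimate to the continuous local estimate, whereas here the caps $\Omega_I$ are thin parallelograms whose orientations vary with $I$, and a direct periodization at scale $R$ produces a window much shorter than one period in the thin direction. The remedy — peel off the $R^{-1/2}$-caps by parabolic decoupling, parabolically rescale each cap so that its short direction becomes a full period, and handle the remaining genuinely one-dimensional long direction by the $\Lambda(p)$-set machinery and mixed-norm interpolation — is the new ingredient; a secondary technical point to watch is verifying that the level-$j$ descendant Cantor sets over the intervals $I$ inherit enough of \eqref{nvcond1}--\eqref{nvcond4} for the one-dimensional local restriction estimate to hold for them with a constant uniform in $I$ and in the scale.
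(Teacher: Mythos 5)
Your proposal is correct in outline but follows a genuinely different route from the paper. The paper's proof of this proposition (via Lemma \ref{lemLR}) never decomposes $\widehat{fd\nu}$ into the caps $\Omega_I$: it dualizes against $g$, covers the frequency support by $N_{2j}^{-1/2}$-squares $Q_{a,c}$, inserts the identity $\sum_{a'}\int_{[0,1]^2}e(au_1+a^2u_2)e(-a'\cdot u)\,du$ to manufacture a Fourier series in an auxiliary torus variable $u$, splits by a mixed-norm H\"older inequality into a discrete extension piece $F_c$ (controlled by $K_{6/\alpha}(P_{2j})$) and a sampling operator $T$ acting on $g$, and bounds $T$ by Benedek--Panzone interpolation between $L^1\to L^1_uL^2_z$ and $L^2\to L^2_uL^2_z$. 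You instead decouple $\widehat{fd\nu}\,\phi_R$ by Proposition \ref{DecP_k} into the caps $\Omega_I$, parabolically rescale each cap so its short frequency direction becomes a unit interval, reduce to a one-dimensional local restriction estimate at $L^{2/\alpha}$ for the rescaled descendant measure $\widetilde\mu_I$, and raise the exponent to $L^{6/\alpha}$ by interpolating with the trivial $L^\infty$ bound on each $\eta_2$-slice. Both proofs ultimately rest on Proposition \ref{DecP_k} and on an interpolation, but you spend the decoupling and the interpolation on the primal side, cap by cap, while the paper spends them on the dual side globally; your route is more modular and makes the role of the critical exponent $p=6/\alpha$ (via the cancellation $\mu(I)^{1/2}N_j^{3/(2p)}\lesssim R^\epsilon$) transparent, while the paper's is more self-contained in that it does not need a one-dimensional local restriction for the descendants.

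Where care is required is exactly the point you flagged at the end: the tail sequence $(n_{j+1},n_{j+2},\dots)$ does not literally inherit \eqref{nvcond3}--\eqref{nvcond4}, so citing ``{\L}aba--Wang's transference applied to the descendant'' is not enough. Instead feed the transference the two quantitative inputs it actually uses, namely $D_{2/\alpha}(E_{2j,j,a})\lesssim\widetilde C_{2/\alpha}^{\,j}\lesssim_\epsilon N_j^\epsilon$ from Lemma \ref{DecE_j,i} together with \eqref{nvcond2}, and the rescaled ball condition $\widetilde\mu_I(B(u,\rho))\lesssim N_j^{O(\epsilon)}\rho^{\alpha-\epsilon}$, which follows from Lemma \ref{CanDim} and $\mu(I)=|A_j|^{-1}\gtrsim N_j^{-\alpha/2-o(1)}$; alternatively, apply the one-dimensional local restriction for $\mu$ itself on $B^1(N_j)$ to $h$ supported on $I$ and verify that the Jacobian $N_j^{-\alpha/4}$ cancels $\mu(I)^{-1/2}$ up to $N_j^{o(1)}$. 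Two smaller points: the slice interpolation you describe is ordinary Riesz--Thorin on each fixed-$\eta_2$ slice (both endpoints have $L^2$ input, so nothing genuinely mixed-norm is needed there, unlike the paper's use of Benedek--Panzone); and Proposition \ref{DecP_k} is stated for sharp Fourier restrictions to $\Omega_I$, whereas $\widehat{fd\nu}\,\phi_R$ has Fourier support in a reflected, $O(R^{-1})$-fattened copy of $\bigcup_I\Omega_I$, so a routine enlargement of the caps in the decoupling constant must be recorded.
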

Proposition \ref{PropLRresult} follows from the following lemma.
\begin{lem}\label{lemLR}
    For $\ell \geq 2j$ and for every $a \in A_{2j}$, we define 
    \begin{equation*}
        \widehat{f}_{\Omega_a} (\xi) = \widehat{f}(\xi) \sum_{b \in A_{\ell,2j,a}} \mathbf{1}_{[0,1]\times [-1,1]} (N_{\ell}^{1/2}\xi_1-b , N_{\ell}\xi_2 -(2b+1) N_{\ell}^{1/2}\xi_1 + b^2+b).
    \end{equation*}
    For any $\sum_{a \in A_{2j} } f_{\Omega_a} \in L^2(\bR^2)$, we have the estimate
    \begin{equation}\label{eqLemLR}
    \norm{\sum_{a \in A_{2j} } f_{\Omega_a}}_{L^p(B^2(N_j))} \lesssim_{\epsilon,p} K_p(P_{2j})N_{2j}^{\epsilon+\frac{3}{2p}-\frac{\alpha}{4}} N_\ell^{-3/4} |A_\ell|^{1/2} \norm{\sum_{a \in A_{2j} } f_{\Omega_a}}_{L^2(\bR^2)}.
    \end{equation}
\end{lem}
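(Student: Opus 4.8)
Write $F=\sum_{a\in A_{2j}}f_{\Omega_a}=\sum_{a\in A_{2j}}\sum_{I\in\cP_\ell(E_{\ell,2j,a})}f_{\Omega_I}$. The plan is to combine three ingredients: the $L^2$-orthogonality of the fine pieces $f_{\Omega_I}$, Bernstein's inequality at the fine scale, and the discrete restriction constant $K_p(P_{2j})$ at the coarse scale. Since the parallelograms $\Omega_I$ have pairwise disjoint $\xi_1$-projections, $\widehat F$ is supported on the disjoint union $\bigcup_{a,I}\Omega_I$, so Plancherel gives
\begin{equation*}
    \norm{F}_{L^2(\bR^2)}^2=\sum_{a\in A_{2j}}\sum_{I\in\cP_\ell(E_{\ell,2j,a})}\norm{f_{\Omega_I}}_{L^2(\bR^2)}^2 .
\end{equation*}
Because $n_1\le n_2\le\cdots$ we have $N_{2j}/N_j=n_{j+1}\cdots n_{2j}\ge n_1\cdots n_j=N_j$, hence $N_j\le N_{2j}^{1/2}$ and $B^2(N_j)\subseteq B^2(N_{2j}^{1/2})$; it therefore suffices to bound $\norm{F}_{L^p(B^2(N_{2j}^{1/2}))}$. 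Condition \eqref{nvcond4}, which controls the ratio $N_{2j}^{1/2}/N_j$, is what allows one to pass freely between the scales $N_j$ and $N_{2j}$ in this section.

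\textbf{Coarse step.} The Fourier support $\bigcup_{a\in A_{2j}}\Omega_{I_{2j}(a)}$, where $I_{2j}(a)=N_{2j}^{-1/2}(a+[0,1])$ is the level-$2j$ interval at $a$, lies in an $O(N_{2j}^{-1})$-neighbourhood of the parabola over $E_{2j}$, and $N_{2j}^{1/2}$ is exactly the dual scale of this cap decomposition. Writing $f_{\Omega_a}(x)=e(\eta_a\cdot x)g_a(x)$, where $\eta_a=(aN_{2j}^{-1/2},a^2N_{2j}^{-1})$ lies on the rescaled parabola and $\widehat{g_a}$ is supported in the translated cap $\Omega_{I_{2j}(a)}-\eta_a\subseteq[0,N_{2j}^{-1/2}]\times[-CN_{2j}^{-1/2},CN_{2j}^{-1/2}]$, each $g_a$ is locally constant at scale $N_{2j}^{1/2}$. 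Inserting a Schwartz weight adapted to $B^2(N_{2j}^{1/2})$, straightening each cap by the parabolic-rescaling affine map, and rescaling the period box $[0,N_{2j}^{1/2}]\times[0,N_{2j}]$ to $[0,1]^2$, one reduces to the discrete restriction inequality defining $K_p(P_{2j})$ and obtains
\begin{equation*}
    \norm{F}_{L^p(B^2(N_{2j}^{1/2}))}\lesssim_{p,\epsilon} N_{2j}^{\epsilon}\,K_p(P_{2j})\,N_{2j}^{3/(2p)}\Big(\sum_{a\in A_{2j}}c_a^2\Big)^{1/2},
\end{equation*}
where $c_a$ is the amplitude of the wave packet of the $a$-th cap piece, so that $c_a\lesssim\norm{f_{\Omega_a}}_{L^\infty(\bR^2)}$, and $N_{2j}^{3/(2p)}$ is the $L^p$-normalisation of the period box $[0,N_{2j}^{1/2}]\times[0,N_{2j}]$. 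Because the caps $\Omega_{I_{2j}(a)}$ are \emph{sheared} parallelograms, the straightening maps differ from cap to cap, and the two periods ($N_{2j}^{1/2}$ in $x_1$ versus $N_{2j}$ in $x_2$) are unequal; the passage from the discrete constant $K_p(P_{2j})$ to the continuous estimate above is therefore carried out via the mixed-norm interpolation of \cite{BP61}, interpolating between an estimate in $L^p_{x_1}L^2_{x_2}$ (using the one-dimensional $\Lambda(p)$-structure in $x_1$ and an $L^2$-bound in $x_2$) and the diagonal estimate in $L^p_{x_1}L^p_{x_2}=L^p$; this replaces the dual-cube argument of \cite{LW18}, which is unavailable on a curve.

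\textbf{Fine step.} It remains to bound the amplitudes. Fixing $a$ and applying Cauchy--Schwarz over the $|A_{\ell,2j,a}|$ fine parallelograms together with Bernstein's inequality $\norm{f_{\Omega_I}}_{L^\infty(\bR^2)}\lesssim|\Omega_I|^{1/2}\norm{f_{\Omega_I}}_{L^2(\bR^2)}\approx N_\ell^{-3/4}\norm{f_{\Omega_I}}_{L^2(\bR^2)}$ gives
\begin{equation*}
    c_a\lesssim |A_{\ell,2j,a}|^{1/2}\,N_\ell^{-3/4}\Big(\sum_{I\in\cP_\ell(E_{\ell,2j,a})}\norm{f_{\Omega_I}}_{L^2(\bR^2)}^2\Big)^{1/2} .
\end{equation*}
Every $a\in A_{2j}$ has exactly $t_{2j+1}\cdots t_\ell=|A_\ell|/|A_{2j}|$ level-$\ell$ descendants, and $|A_{2j}|=\prod_{k\le 2j}t_k\ge\prod_{k\le 2j}n_k^{\alpha/2}=N_{2j}^{\alpha/2}$, hence $|A_{\ell,2j,a}|\le |A_\ell|N_{2j}^{-\alpha/2}$. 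Summing over $a$ and using the $L^2$-orthogonality from the first step,
\begin{equation*}
    \sum_{a\in A_{2j}}c_a^2\lesssim\Big(\max_{a\in A_{2j}}|A_{\ell,2j,a}|\Big)N_\ell^{-3/2}\norm{F}_{L^2(\bR^2)}^2\le |A_\ell|\,N_{2j}^{-\alpha/2}\,N_\ell^{-3/2}\,\norm{F}_{L^2(\bR^2)}^2 .
\end{equation*}
Substituting into the coarse estimate and using $N_{2j}^{3/(2p)}N_{2j}^{-\alpha/4}=N_{2j}^{3/(2p)-\alpha/4}$ yields $\norm{F}_{L^p(B^2(N_j))}\lesssim_{p,\epsilon}K_p(P_{2j})N_{2j}^{\epsilon+3/(2p)-\alpha/4}N_\ell^{-3/4}|A_\ell|^{1/2}\norm{F}_{L^2(\bR^2)}$, which is \eqref{eqLemLR}.

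\textbf{Main obstacle.} I expect the hard part to be the coarse step: turning the purely discrete quantity $K_p(P_{2j})$ into a local restriction estimate over a cube at the dual scale when $p=6/\alpha$ need not be an even integer and the caps are sheared. In the flat setting of \cite{LW18} one may pass to dual cubes and the two coordinate directions play symmetric roles; on the parabola the shear couples the variables, the periods in $x_1$ and $x_2$ differ, and the cap-straightening maps are $a$-dependent, so the one-variable arguments of \cite{LW18} do not transfer directly, and one must organise the estimate through mixed norms. A related difficulty, flagged in the introduction, is that in physical space the wave packets attached to distinct caps all overlap on a cube of side comparable to $B^2(N_{2j}^{1/2})$, so no orthogonality is available there and the $\ell^2$-sum of amplitudes supplied by the discrete restriction inequality is all one can hope for.
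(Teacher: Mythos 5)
Your overall strategy coincides with the paper's: bound the coarse ($\cP_{2j}$) scale using the discrete restriction constant $K_p(P_{2j})$ and mixed-norm interpolation, then bound each block $f_{\Omega_a}$ on the fine ($\cP_\ell$) scale by an $L^2$ quantity via Bernstein and Cauchy--Schwarz, and combine with the bound $|A_{\ell,2j,a}|\le |A_\ell|N_{2j}^{-\alpha/2}$. Your fine step is correct and produces exactly the same power count as the paper, which phrases the same estimate slightly differently (a pointwise H\"older bound $|\widehat{f}_{\Omega_a}\ast\widehat{\eta}_{2j}|^2\le(|\widehat{f}_{\Omega_a}|^2\ast|\widehat{\eta}_{2j}|)(\mathbf{1}_{\ell,2j,a}\ast|\widehat{\eta}_{2j}|)$ instead of your detour through $\norm{f_{\Omega_a}}_{L^\infty}$ and Bernstein; the two routes are equivalent here).

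The genuine gap is the coarse step, which you state but do not prove. Your intermediate claim
\begin{equation*}
\norm{F}_{L^p(B^2(N_{2j}^{1/2}))}\lesssim_{p,\epsilon}N_{2j}^\epsilon K_p(P_{2j})N_{2j}^{3/(2p)}\Big(\sum_{a\in A_{2j}}\norm{f_{\Omega_a}}_{L^\infty}^2\Big)^{1/2}
\end{equation*}
does not follow merely from ``each $g_a$ is locally constant at scale $N_{2j}^{1/2}$'' plus a rescaling: $f_{\Omega_a}$ is a superposition of $|A_{\ell,2j,a}|$ wave packets at the much finer scale $\ell$, so there is no single amplitude $c_a$, and making the heuristic precise is exactly where the lemma's mathematical content sits. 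The paper gets the corresponding estimate by dualizing against $g\in L^q$, transferring to the Fourier side where $\widehat{f}_{\Omega_a}\ast\widehat{\eta}_{2j}$ becomes supported on a genuine $O(N_{2j}^{-1/2})$-cube (the fattening by $\widehat{\eta}_{2j}$ is what removes the shear), inserting the auxiliary integral $\sum_{a'}\int_{[0,1]^2}e(au_1+a^2u_2)e(-a'\cdot u)\,du$ to encode the constraint $a'=(a,a^2)$, splitting the pairing into $F_c(u,z)$ and a Dirichlet-kernel expression $G_c(u,z)$, applying H\"older in the mixed norm $\norm{F_c}_{L^p_uL^2_z}\norm{G_c}_{L^q_uL^2_z}$, estimating $F_c$ by $K_p(P_{2j})$ and $G_c$ by interpolating an $L^1\to L^1_uL^2_z$ bound (Dirichlet kernel in $L^1$) against an $L^2\to L^2_uL^2_z$ bound (Plancherel and overlap counting). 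You correctly name the two key tools (mixed-norm interpolation from \cite{BP61} and the discrete constant $K_p$), but without the $u$-variable encoding and the two endpoint estimates for $G_c$ the reduction is not established. Everything after the coarse estimate in your write-up is fine; the coarse estimate itself needs the full argument.
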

\begin{proof}[Proof of Proposition \ref{PropLRresult}]
    Let $\eta$ be a Schwartz function on $\bR$ such that $|\eta| \geq 1$ on $[-1,1]$ and $\widehat{\eta}$ is supported on $[0,1]$. Let
    \begin{equation*}
        \widehat{F}(\xi) := f(\xi_1,\xi_1^2) \sum_{a \in A_{2j}}\sum_{b \in A_{\ell,2j,a}} \mathbf{1}_{[0,1]}(N_\ell^{1/2}\xi_1 - b)\widehat{{\eta}}_{\ell}(\xi_2-\xi_1^2) |E_\ell|^{-1}
    \end{equation*}
    where ${\eta}_{\ell}(x) ={\eta}(x/ N_\ell)$ for $x \in \bR$. Note that
    \begin{equation*}
        \sum_{a \in A_{2j}}\sum_{b \in A_{\ell,2j,a}} \mathbf{1}_{[0,1]}(N_\ell^{1/2}\xi_1 - b) = \mathbf{1}_{E_\ell}(\xi_1).    
    \end{equation*}
    If $x \in B^2(N_j)$, we have
    \begin{equation*}
        |\widehat{fd\nu_\ell}(x)| \leq  |\widehat{fd\nu_\ell}(x) \eta_\ell(-x_2)| = |F(-x)|.
    \end{equation*}
    Therefore, \eqref{eqLemLR} implies that
    \begin{equation*}
        \begin{split}
            \norm{\widehat{f d\nu_\ell} }_{L^p(B^2(N_j))} &\lesssim \norm{F}_{L^p (B^2(N_j))} \\
            & \lesssim_\epsilon  K_p(P_{2j})N_{2j}^{\epsilon + \frac{3}{2p}-\frac{\alpha}{4}} N_{\ell}^{-3/4} |A_\ell|^{1/2} \norm{{F}}_{L^2(\bR^2)}\\
            & =  K_p(P_{2j})N_{2j}^{\epsilon + \frac{3}{2p}-\frac{\alpha}{4}} N_{\ell}^{-3/4} |A_\ell|^{1/2} \norm{\widehat{F}}_{L^2(\bR^2)}.
        \end{split}
    \end{equation*}
    Since $|E_{\ell}| = |A_\ell| N_\ell^{-1/2}$, we get $\norm{\widehat{F}}_{L^2(\bR^2)} \lesssim N_{\ell}^{3/4} |A_\ell|^{-1/2} \norm{f}_{L^2(\nu_\ell)}$.
    
    Since $p =6/\alpha$, it follows from Proposition \ref{DecP_k} that 
    \begin{equation*}
    \begin{split}
        \norm{\widehat{f d\nu_\ell} }_{L^{p}(B^2(N_j))} &\lesssim_\epsilon K_{p}(P_{2j}) N_{2j}^\epsilon \norm{f}_{L^2(\nu_\ell)}\\
        &\lesssim_\epsilon N_{2j}^{2\epsilon}\norm{f}_{L^2(\nu_\ell)}.
    \end{split}
    \end{equation*}
    If $N_{j-1} \leq R \leq N_{j}$, conditions \eqref{nvcond2}, \eqref{nvcond3} and \eqref{nvcond4} imply that 
    \begin{equation*}
        N_{2j} \leq B^{2j} N_j^2 =B^{2j}n_{j}^2 N_{j-1}^2 \lesssim_\epsilon N_{j-1}^{2+ \epsilon}.  
    \end{equation*} 
    Therefore, 
    \begin{equation*}
    \begin{split}
        \norm{\widehat{f d\nu_\ell} }_{L^{p}(B^2(R))} &\leq \norm{\widehat{f d\nu_\ell} }_{L^{p}(B^2(N_{j}))}\\
        &\lesssim N_{2j}^{2\epsilon} \norm{f}_{L^2(\nu_\ell)}\\
        &\lesssim_\epsilon  N_{j-1}^{O(\epsilon)}  \norm{f}_{L^2(\nu_\ell)}\\
        &\lesssim_\epsilon R^{O(\epsilon)} \norm{f}_{L^2(\nu_\ell)}.
    \end{split}
    \end{equation*}
    When we take the limit $\ell \rightarrow \infty$, we get \eqref{eqLRpropRes} and by duality, \eqref{coreqLR} follows.
\end{proof}
Now let us turn to the proof of Lemma \ref{lemLR}.
\begin{proof}[Proof of Lemma \ref{lemLR}]
     Let $\eta$ be a Schwartz function on $\bR^2$ such that $|\eta| \geq 1$ on $[-1,1]^2$ and $\widehat{\eta}$ is supported on $[0,1]^2$ and write ${\eta}_{2j}(x) = \eta( x / N_{2j}^{1/2}) $. Since $N_j \leq N_{2j}^{1/2}$, we have
    \begin{equation*}
    \begin{split}
        \norm{\sum_{a \in A_{2j}} f_{\Omega_a}}_{L^p(B^2(N_j))} &\leq \norm{\sum_{a \in A_{2j}} f_{\Omega_a} {\eta_{2j}} }_{L^p(\bR^2) }\\
        &=\sup_{\norm{g}_{L^q (\bR^2)} \leq 1 } \left| \int \sum_{a \in A_{2j}} f_{\Omega_a }(x) {\eta}_{2j}(x) g(x)  dx \right| \\
        &=\sup_{\norm{g}_{L^q (\bR^2)} \leq 1 }\left| \int \sum_{a \in A_{2j}} \widehat{f}_{\Omega_a} \ast \widehat{{\eta}}_{2j} (\xi) \widehat{g}(\xi) d\xi \right|
    \end{split}
    \end{equation*}
    where $1/p+1/q=1$.
    
    The function $\widehat{f}_{\Omega_a}$ is supported on a rectangle of dimensions $O(N_{2j}^{-1/2})\times  O(N_{2j}^{-1})$ centered at $((a+1/2) N_{2j}^{-1/2}, (a+1/2)^2 N_{2j}^{-1})$ and the direction that it is pointing depends on $a$. The function $\widehat{{\eta}}_{2j}$ is supported on a square with side length $O(N_{2j}^{-1/2})$ centered at the origin. Therefore, $\widehat{f}_{\Omega_a} \ast \widehat{{\eta}}_{2j} (\xi)$ is supported on a square with side length $O(N_{2j}^{-1/2})$ centered at $(aN_{2j}^{-1/2}, a^2 N_{2j}^{-1})$.
    
    For $a \in A_{2j}$ and $c=(c_1,c_2) \in \bZ^2$, let us consider characteristic functions
    \begin{equation*}
        \mathbf{1}_{Q_{a,c}}(\xi) = \mathbf{1}_{[0,1]^2} (N_{2j}^{1/2} \xi_1 - (a+c_1), N_{2j}^{1/2} \xi_2 -(a^2N_{2j}^{-1/2}+c_2) ).
    \end{equation*}
    Then, we have
    \begin{equation*}
            \norm{\sum_{a \in A_{2j}} f_{\Omega_a}}_{L^p(B^2(N_j))} \leq  \sup_{\norm{g}_{L^q (\bR^2)} \leq 1 }\sum_{|c| \lesssim 1}\left| \int \sum_{a \in A_{2j}} \widehat{f}_{\Omega_a} \ast \widehat{{\eta}}_{2j} (\xi) \widehat{g}(\xi) \mathbf{1}_{Q_{a,c}}(\xi) d\xi \right|.
    \end{equation*}
    By letting
    \begin{equation}\label{chvar}
        z_1 = N_{2j}^{1/2} \xi_1- (a+c_1) \qquad \mathrm{and}    \qquad z_2 = N_{2j}^{1/2} \xi_2 -(a^2 N_{2j}^{-1/2 }+c_2),
    \end{equation}
    we obtain
    \begin{equation*}
        \begin{split}
            \sup_{\norm{g}_{L^q (\bR^2)} \leq 1 } \sum_{|c| \lesssim 1} \Bigg|\int_{[0,1]^2} \sum_{a \in A_{2j}} &\widehat{f}_{\Omega_a} \ast \widehat{{\eta}}_{2j} (N_{2j}^{-1/2}( z_1 + a+c_1), N_{2j}^{-1/2} (z_2+c_2) + a^2 N_{2j}^{-1}  )\\
            &\widehat{g}(N_{2j}^{-1/2} (z_1 + a+c_1), N_{2j}^{-1/2}( z_2+c_2) + a^2 N_{2j}^{-1}  ) dz N_{2j}^{-1 } \Bigg| .
        \end{split}
    \end{equation*}
    Let $a'=(a_1',a_2')\in \bZ^2$ and $a'\cdot u = a_1' u_1 + a_2'u_2$. Then, we have 
    \begin{equation}\label{plugin}
        \sum_{|a_1'| \lesssim N_{2j}^{1/2} , |a_2'| \lesssim N_{2j}} \int_{[0,1]^2} e(au_1 +a^2u_2) e(-a'\cdot u)du=1   
    \end{equation}
    if and only if $a_1'=a$ and $a_2' =a^2$ for $|a| \lesssim N_{2j}^{1/2}$.
    
    Let
    \begin{equation*}
        F_c(u,z) :=\sum_{a \in A_{2j}} \widehat{f}_{\Omega_a} \ast \widehat{{\eta}}_{2j} (N_{2j}^{-1/2}( z_1 + a+c_1), N_{2j}^{-1/2} (z_2+c_2) + a^2 N_{2j}^{-1}  )e(au_1 + a^2u_2)
    \end{equation*}
    and 
    \begin{equation*}
        G_c(u,z) := \sum_{|a_1'| \lesssim N_{2j}^{1/2} , |a_2'| \lesssim N_{2j}}\widehat{g}(N_{2j}^{-1/2} (z_1 + a_1' +c_1), N_{2j}^{-1/2} (z_2+c_2) + a_2' N_{2j}^{-1}) e(-a'\cdot u).
    \end{equation*}
    Also, we denote the mixed norm of $f$ by
    \begin{equation*}
        \norm{f}_{L^{p_1}_u L^{p_2}_z([0,1]^2)} = \bigg[ \int_{[0,1]^2} \left( \int_{[0,1]^2} |f(u,z)|^{p_1} du\right)^{p_2/p_1} dz \bigg]^{1/p_2}.
    \end{equation*}
    By \eqref{plugin} and H\"{o}lder's inequality, we have
    \begin{equation}\label{spltF_cG_c}
    \begin{split}
        \norm{\sum_{a \in A_{2j}} f_{\Omega_a} }_{L^p(B^2(N_j))} &\leq \sup_{\norm{g}_{L^q(\bR^2) \leq 1}} \sum_{ |c| \lesssim 1} \left|\int_{[0,1]^2}\int_{[0,1]^2} F_c(u,z) G_c(u,z) dudz\right| N_{2j}^{-1 }\\
        &\leq \sup_{\norm{g}_{L^q(\bR^2) \leq 1}} \sum_{ |c| \lesssim 1} \norm{F_c(u,z)}_{L^p_u L^2_z([0,1]^2)} \norm{G_c(u,z)}_{L^q_uL^2_z([0,1]^2)} N_{2j}^{-1 }.
    \end{split}
    \end{equation}
    Since $F_c(u,z) $ is a Fourier series with respect to $u $ variable, the definition of $K_p(P_{2j})$ and \eqref{chvar} implies that 
    \begin{equation}\label{estF_c}
    \begin{split}
        \norm{F_c}_{L^p_u L^2_z([0,1]^2)} &\leq K_p(P_{2j}) \norm{F_c}_{L^2_u L^2_z([0,1]^2)} \\
        &\leq  K_p(P_{2j}) N_{2j}^{1/2} \left( \int \sum_{a \in A_{2j}} |\widehat{f}_{\Omega_a} \ast \widehat{\eta}_{2j}(\xi)|^2 d\xi  \right)^{1/2}. 
    \end{split}
    \end{equation}
    
    In order to obtain an estimate for $\norm{G_c}_{L_u^q L_z^2([0,1]^2)}$, we consider $G_c$ as a linear operator $T: L^q (\bR^2) \rightarrow L_u^qL_z^2([0,1]^2)$ acting on $g$, which is defined by
    \begin{equation*}
    \begin{split}
        Tg(u,z) =\sum_{|a_1'| \lesssim N_{2j}^{1/2} , |a_2'| \lesssim N_{2j}} &\int g(x) e(N_{2j}^{-1/2} z_1x_1 + N_{2j}^{-1/2}z_2x_2 )\\
        &e (N_{2j}^{-1/2}(a_1' +c_1)x_1+ (a_2' N_{2j}^{-1} +c_2 N_{2j}^{-1/2} )x_2  ) e(-a'\cdot u )dx.
    \end{split}
    \end{equation*}
    If we show that 
    \begin{equation}\label{Tg1,2}
        \norm{Tg}_{L_u^1L_z^2([0,1]^2)} \lesssim_\epsilon N_{2j}^\epsilon \norm{g}_{L^1(\bR^2)}  \qquad \forall g \in L^1(\bR^2)
    \end{equation}
    and
    \begin{equation}\label{Tg2,2}
        \norm{Tg}_{L_u^2L_z^2([0,1]^2)} \lesssim N_{2j}^{3/4} \norm{g}_{L^2(\bR^2)} \qquad \qquad \forall g \in L^2(\bR^2),
    \end{equation}
    then the mixed norm interpolation theorem (see \cite[Theorem 2 in Section 7]{BP61}) implies that
    \begin{equation}\label{estG_c}
        \norm{G_c(u,z)}_{L_u^q L_z^2([0,1]^2)} = \norm{Tg}_{L_u^q L_z^2([0,1]^2)} \lesssim_\epsilon N_{2j}^{\epsilon+\frac{3}{2}\left(1-\frac{1}{q}\right)} \norm{g}_{L^q(\bR^2)} \qquad \forall g \in L^q(\bR^2).
    \end{equation}
    
    When $q=1$, we only need to consider the case $c=0$, since the similar argument works for $c \neq 0$. We have
    \begin{equation*}
            Tg(u,z) =\sum_{|a_1'| \lesssim N_{2j}^{1/2} , |a_2'| \lesssim N_{2j}} \int_{\bR^2} g(N_{2j}^{1/2}x_1, N_{2j}x_2) e(x_1z_1 + N_{2j}^{1/2} x_2 z_2) N_{2j}^{3/2} e(-a'\cdot (u-x) )dx.
    \end{equation*}
    Note that $\sum_{|a_1'| \lesssim N_{2j}^{1/2} , |a_2'| \lesssim N_{2j}}e(-a'\cdot u)$ is a product of Dirichlet kernels, since 
    \begin{equation*}
        \sum_{|a_1'| \lesssim N_{2j}^{1/2} , |a_2'| \lesssim N_{2j}}e(-a' \cdot u) = \sum_{|a_1'| \lesssim N_{2j}^{1/2}} e(-a_1' u_1) \sum_{|a_2'| \lesssim N_{2j}} e(-a_2'u_2). 
    \end{equation*}
    Uniformly in $x$, we have
    \begin{equation*}
        \norm{\sum_{|a_1'| \lesssim N_{2j}^{1/2} , |a_2'| \lesssim N_{2j}}e(-a' \cdot (u-x))}_{L^1([0,1]^2,du)} \lesssim \log(N_{2j}^{1/2}) \log (N_{2j}).
    \end{equation*}
    Therefore, we obtain that
    \begin{equation}\label{L1du}
        \begin{split}
            \norm{Tg}_{L^1([0,1]^2,du)} &\lesssim   \int_{\bR^2} \int_{[0,1]^2} \bigg| g(N_{2j}^{1/2} x_1 , N_{2j} x_2 ) N_{2j}^{3/2}\bigg|\bigg| \sum_{|a_1'| \lesssim N_{2j}^{1/2}, |a_2'| \lesssim N_{2j} } e(-a' \cdot (x-u))\bigg| dudx  \\
            &\lesssim \log(N_{2j})^2 \int_{\bR^2 }  \bigg| g(N_{2j}^{1/2} x_1 , N_{2j} x_2 ) N_{2j}^{3/2}\bigg| dx\\
            &\lesssim_{\epsilon} N_{2j}^\epsilon \norm{g}_{L^1(\bR^2)}.
        \end{split}
    \end{equation}
    Since the estimate \eqref{L1du} holds uniformly in $z$, we established \eqref{Tg1,2}.
    
    When $q=2$, since $Tg(u,z)$ is a Fourier series with respect to $u$ variable, we use Plancherel's theorem and by \eqref{chvar}, we have
    \begin{equation*}
    \begin{split}
        &\norm{Tg}_{L_u^2L_z^2([0,1]^2)} \\
        &= \left( \int_{[0,1]^2} \sum_{|a_1'| \lesssim N_{2j}^{1/2} , |a_2'| \lesssim N_{2j}} |\widehat{g} (N_{2j}^{-1/2} (z_1 +a_1'+c_1), N_{2j}^{-1/2} (z_2+c_2) + a_2' N_{2j}^{-1} )|^2  dz \right)^{1/2}\\
        &=N_{2j}^{1/2} \left( \int |\widehat{g}(\xi)|^2 \sum_{|a_1'| \lesssim N_{2j}^{1/2} , |a_2'| \lesssim N_{2j}} \mathbf{1}_{Q_{a'c}}(\xi) d\xi  \right)^{1/2}
    \end{split}
    \end{equation*}
    where
    \begin{equation*}
        \mathbf{1}_{Q_{a',c}}(\xi)=\mathbf{1}_{[0,1]^2} (N_{2j}^{1/2} \xi_1 - (a_1'+c_1), N_{2j}^{1/2} \xi_2 -(a_2'N_{2j}^{-1/2}+c_2)  ).
    \end{equation*}
    The function $\mathbf{1}_{Q_{a',c}}(\xi)$ is supported on a square with side length $O(N_{2j}^{-1/2})$ centered at $( (a_1'+c_1)N_{2j}^{-1/2},  a_2'N_{2j}^{-1}+c_2N_{2j}^{-1/2}) $. These squares overlap at most $O(N_{2j}^{1/2})$ times. Therefore, we have
    \begin{equation*}
        \norm{Tg}_{L_u^2L_z^2([0,1]^2)} \lesssim N_{2j}^{3/4} \norm{\widehat{g}}_{L^2(\bR^2)} \leq N_{2j}^{3/4} \norm{g}_{L^2(\bR^2)}.
    \end{equation*}
   Therefore, we established \eqref{Tg2,2}. Combining \eqref{spltF_cG_c}, \eqref{estF_c} and \eqref{estG_c}, we obtain that
   \begin{equation} \label{LReqlast-1}
       \norm{\sum_{a \in A_{2j}} f_{\Omega_a} }_{L^p(B^2(N_j))} \lesssim_\epsilon K_p(P_{2j}) N_{2j}^{\epsilon + \frac{3}{2p}-\frac{1}{2}} \left( \int \sum_{a \in A_{2j}} |\widehat{f}_{\Omega_a} \ast \widehat{\eta}_{2j}(\xi)|^2 d\xi  \right)^{1/2}. 
   \end{equation}
   
   We define
   \begin{equation*}
       \mathbf{1}_{\ell,2j,a}(\xi) := \sum_{b \in A_{\ell,2j,a}} \mathbf{1}_{[0,1]\times [-1,1]} (N_{\ell}^{1/2}\xi_1-b , N_{\ell}\xi_2 -(2b+1) N_{\ell}^{1/2}\xi_1 + b^2+b)
   \end{equation*}
   so that $\widehat{f}_{\Omega_a} (\xi) = \widehat{f}_{\Omega_a} \mathbf{1}_{\ell,2j,a}(\xi)$. By H\"{o}lder's inequality, we have 
   \begin{equation*}
   \begin{split}
        |\widehat{f}_{\Omega_a} \ast \widehat{\eta}_{2j}(\xi)|^2  &\leq  (|\widehat{f}_{\Omega_a} \mathbf{1}_{\ell,2j,a}| \ast |\widehat{\eta}_{2j}|(\xi) )^2\\
       &\leq (|\widehat{f}_{\Omega_a}|^2 \ast |\widehat{\eta}_{2j}| (\xi)) ( \mathbf{1}_{\ell,2j,a} \ast |\widehat{\eta}_{2j}|(\xi)).
   \end{split}
   \end{equation*}
   Since $|\widehat{\eta}_{2j}|\lesssim N_{2j}$, for any fixed $\xi$, we have
   \begin{equation*}
       (\mathbf{1}_{\ell,2j,a} \ast |\widehat{\eta}_{2j}| (\xi)) \leq \norm{\widehat{\eta}_{2j}}_{L^\infty(\bR^2)} \norm{\mathbf{1}_{\ell,2j,a}}_{L^1(\bR^2)} \lesssim N_{2j}N_{\ell}^{-3/2} \frac{|A_\ell|}{|A_{2j}|} .
   \end{equation*}
   Since $N_{2j}^{\alpha/2} \leq |A_{2j}|$ and $\norm{\widehat{\eta}_{2j}}_{L^1(\bR^2)} \lesssim 1$, we get
   \begin{equation}\label{LReqlast}
       \begin{split}
           \int \sum_{a \in A_{2j}} |\widehat{f}_{\Omega_a} \ast \widehat{\eta}_{2j}(\xi)|^2 d\xi  &\lesssim N_{2j}^{1-\alpha/2} N_\ell^{-3/2} |A_\ell| \int \sum_{a \in A_{2j}} |\widehat{f}_{\Omega_a}|^2 \ast |\widehat{\eta}_{2j}|(\xi) d\xi \\
           &\lesssim N_{2j}^{1-\alpha/2} N_\ell^{-3/2} |A_\ell| \sum_{a \in A_{2j} } \norm{ \widehat{f}_{\Omega_a}}_{L^2(\bR^2)}^2\\
           &\lesssim N_{2j}^{1-\alpha/2} N_\ell^{-3/2}|A_\ell| \norm{\sum_{a \in A_{2j} } \widehat{f}_{\Omega_a}}_{L^2(\bR^2)}^2.
       \end{split}
   \end{equation}
   By \eqref{LReqlast-1} and \eqref{LReqlast}, we obtain \eqref{eqLemLR}.
\end{proof}

\section{Fourier decay}\label{secFD}
We abbreviate almost surely to a.s. When we say an inequality holds a.s., the corresponding implicit constant may depend on the measure $\nu$, but the probability that the constant exist is $1$. Let us consider a nondecreasing sequence $\{m_j\}_{ j \in \bN }$ such that $2 \leq m_j$. Let $M_j := m_{j} \cdots m_1$ and assume that for any $ \epsilon >0$ and $ j \in \bN$,
\begin{equation}\label{FDcond1}
    m_{j+1} \lesssim_\epsilon M_j^\epsilon.
\end{equation}
Let $\cI_j$ be the collection of $M_j^{-1}$-intervals such that
\begin{equation*}
    \cI_j =\{ M_j^{-1} (k + [0,1]) : 0 \leq k \leq M_j-1 \}.
\end{equation*}
We consider a sequence of random functions $\mu_j $ which satisfies the following conditions for some deterministic nondecreasing sequence $\{ \beta_j\}_{j \in \bN}$:
\begin{itemize}
    \item $\mu_0 = \mathbf{1}_{[0,1]}$.
    \item $\mu_j = \beta_j \mathbf{1}_{E_j}$ where $E_j$ is a union of intervals in $\cI_j$.
    \item $\mathbb{E}(\mu_{j+1}(x)| E_j) = \mu_j(x)$ for all $x \in [0,1]$.
    \item Conditioned on $E_j$, the sets $I_j \cap E_{j+1}$ are chosen independently for each $I_j \subseteq E_j$.
\end{itemize}
We will identify the functions $\mu_j$ with the measures $\mu_j dx$ and write $\norm{\mu_j}:=\mu_j([0,1])$.
\begin{remark}
    First three conditions are same with (M1)-(M3) in \cite{ShSu17} and the last condition is weaker than (M4) in \cite{ShSu17}. Since we only prove Fourier decay, it suffices to assume the weaker condition.
\end{remark}

Let $\alpha_0$ be a number such that
\begin{equation}\label{FDcond2}
    1-\alpha_0 = \lim_{j \rightarrow \infty} \frac{\log \beta_j}{\log M_j}.
\end{equation}
Note that \eqref{FDcond2} implies that there exist $j_0(\epsilon)$ for every $\epsilon>0$ such that if $j \geq j_0(\epsilon)$, then
\begin{equation}\label{FDcond2_1}
	M_j^{1-\alpha_0 -\epsilon} \leq \beta_j \leq M_j^{1-\alpha_0+\epsilon}.
\end{equation}

Shmerkin and Suomala showed that $\mu_j$ converges weakly to a measure $\mu$ supported on $[0,1]$ and the support of the measure ${\mu}$  is a Salem set a.s. so that $|\widehat{\mu}(\xi)| \lesssim_\sigma (1+|\xi|)^{-\sigma/2} $ for any $\sigma < \alpha_0$, see \cite[Theorem 4.2]{ShSu17} and \cite[Theorem 14.1]{ShSu18}.

Now, let $\nu_j$ be a measure defined as
\begin{equation*}
    \int f(x_1,x_2) d\nu_j := \int f(x_1,x_1^2) \mu_j(x_1) dx_1. 
\end{equation*}
Similarly, $\nu_j$ converges weakly to a measure $\nu$ supported on $\bP^1$.
\begin{prop}\label{PropFDecay}
    Suppose that $\mu_j$ is a sequence of random measures that satisfies the conditions above. For any $0< \sigma <\alpha_0$, the limit measure $\nu$ satisfies the following inequality a.s.
    \begin{equation}\label{Fdecay}
        |\widehat{\nu}(\xi)| \lesssim_{\sigma} (1+|\xi|)^{-\sigma/2} \qquad \forall \xi \in \bR^2.
    \end{equation}
\end{prop}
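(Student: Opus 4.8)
The plan is to run the moment/martingale method of Kaufman \cite{Kauf76} and Shmerkin--Suomala \cite{ShSu17}, inserting van der Corput estimates for the quadratic phase $\phi(x)=-x\xi_1-x^2\xi_2$ at each place where the curvature of $\bP^1$ has to be used. Fix $\sigma\in(0,\alpha_0)$; since intersecting over a sequence $\sigma\uparrow\alpha_0$ recovers the full statement, it is enough to prove \eqref{Fdecay} for this $\sigma$. As $\nu$ is a probability measure supported in $[0,1]^2$ we have $|\nabla\widehat{\nu}(\xi)|\le 2\pi\int(|x_1|+|x_2|)\,d\nu\lesssim 1$, so for dyadic $R$ it suffices to bound $|\widehat{\nu}(\xi)|$ on an $R^{-\sigma/2-1}$-net of $\{\,|\xi|\sim R\,\}$, a set of $O(R^{10})$ points. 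By Borel--Cantelli over the net and over dyadic scales, the proposition reduces to the single-frequency tail estimate
\[
    \mathbb{P}\bigl(|\widehat{\nu}(\xi)|>R^{-\sigma/2}\bigr)\le\exp(-R^{c}),\qquad|\xi|=R,
\]
for some $c=c(\sigma,\alpha_0)>0$ and all large $R$.

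Let $\cF_k=\sigma(E_1,\dots,E_k)$. Since $\mathbb{E}(\mu_k\mid\cF_{k-1})=\mu_{k-1}$, the identity $\widehat{\nu_k}(\xi)=\int_0^1 e(\phi(x))\,\mu_k(x)\,dx$ shows that $\{\widehat{\nu_k}(\xi)\}_k$ is an $L^2$-bounded martingale, so $\widehat{\nu}(\xi)=\widehat{\nu_0}(\xi)+\sum_{k\ge1}D_k(\xi)$ with $D_k(\xi)=\sum_{I\in\cI_{k-1},\,I\subseteq E_{k-1}}Y_{k,I}(\xi)$ and $Y_{k,I}(\xi)=\int_I e(\phi(x))(\mu_k-\mu_{k-1})(x)\,dx$; conditionally on $\cF_{k-1}$ the $Y_{k,I}$ are independent and have mean $0$. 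The deterministic piece $\widehat{\nu_0}(\xi)=\int_0^1 e(\phi(x))\,dx$ is handled by van der Corput: the second-derivative estimate ($\phi''\equiv-2\xi_2$) gives $|\widehat{\nu_0}(\xi)|\lesssim|\xi_2|^{-1/2}$ when $|\xi_2|\gtrsim|\xi_1|$, while integration by parts ($|\phi'|=|\xi_1+2x\xi_2|\gtrsim|\xi_1|$) gives $|\widehat{\nu_0}(\xi)|\lesssim|\xi_1|^{-1}$ when $|\xi_1|\gtrsim|\xi_2|$; hence $|\widehat{\nu_0}(\xi)|\lesssim(1+|\xi|)^{-1/2}\le R^{-\sigma/2}$, as $\sigma<1$. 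This is exactly where the curvature of the parabola is used, replacing the decay of $\widehat{\mathbf 1_{[0,1]}}$ in the line case of \cite{ShSu17}.

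For the random part we write $\mu_k-\mu_{k-1}=\beta_k\mathbf 1_{E_k\cap I}-\beta_{k-1}\mathbf 1_I$ on each $I\in\cI_{k-1}$ and expand $Y_{k,I}$ over the $m_k$ children $J$ of $I$. Using $\mathbb{P}(J\subseteq E_k\mid\cF_{k-1})=\beta_{k-1}/\beta_k$ one gets the conditional second-moment bound $\mathbb{E}\bigl(|Y_{k,I}(\xi)|^2\bigm|\cF_{k-1}\bigr)\lesssim_\epsilon\beta_{k-1}\beta_k\,M_{k-1}^{\epsilon}\sum_{J\subseteq I}|v_J|^2$, where $v_J=\int_J e(\phi)\,dx$. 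Van der Corput on the parabola arc above each $J$ yields $|v_J|\lesssim\min\bigl(|J|,\,|\xi_2|^{-1/2},\,|\xi_1+2x_J\xi_2|^{-1}\bigr)$, and splitting the $M_k$-intervals into those near and far from the stationary point $x_*=-\xi_1/(2\xi_2)$ gives $\sum_{q}|v_q|^2\lesssim\min(M_k^{-1},(1+|\xi|)^{-1})$ for the full family of $M_k$-intervals $q\subseteq[0,1]$. Summing over $I\subseteq E_{k-1}$ (using $\mathbb{E}\mu_{k-1}\equiv1$, so $\mathbb{P}(\mathrm{parent}(J)\subseteq E_{k-1})=\beta_{k-1}^{-1}$) and then over $k$, and inserting $\beta_j=M_j^{1-\alpha_0+O(\epsilon)}$, $m_j\lesssim_\epsilon M_{j-1}^{\epsilon}$ and $M_j\ge2^j$, the geometric sums are dominated by the scale $M_k\sim R$ and produce
\[
    \sum_{k\ge1}\mathbb{E}\bigl(|D_k(\xi)|^2\bigr)\lesssim_\epsilon R^{-\alpha_0+C\epsilon}.
\]

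Finally, to upgrade this $L^2$-bound to the stretched-exponential tail one estimates all even moments $\mathbb{E}\bigl|\sum_k D_k(\xi)\bigr|^{2m}$ by a Burkholder/Rosenthal inequality for the conditionally independent increments, using additionally the pointwise bounds $|Y_{k,I}|\lesssim_\epsilon M_{k-1}^{-\alpha_0+\epsilon}$ and the high-probability bound $\#\{I\subseteq E_{k-1}\}\lesssim_\epsilon M_{k-1}^{\alpha_0+\epsilon}$; optimising in $m$ and setting $\sigma=\alpha_0-C\epsilon$ gives $\mathbb{P}\bigl(|\sum_k D_k(\xi)|>R^{-\sigma/2}\bigr)\le\exp(-R^{c})$, which together with the bound on $\widehat{\nu_0}$ and the first-paragraph reduction completes the proof. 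The main obstacle is this last step: the \emph{almost sure} bound on the conditional variance $\sum_{I}\mathbb{E}(|Y_{k,I}|^2\mid\cF_{k-1})$ is lossy --- by a factor $\approx\beta_{k-1}$ --- compared with its expectation, because $E_{k-1}$ could happen to cover the $|\xi_2|^{-1/2}$-neighbourhood of the stationary point $x_*$ of $\phi$. As in \cite{ShSu17} one must therefore either control the conditional variance itself with high probability (a second-order concentration, saying that $E_{k-1}$ meets that neighbourhood in at most roughly its expected number of intervals) or, equivalently, run the exceptional-set/truncation argument of \cite{Kauf76, ShSu17} while tracking the oscillatory gains coming from the quadratic phase. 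Carrying out this bookkeeping on $\bP^1$ is the technical heart of the section.
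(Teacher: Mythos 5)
Your overall strategy is the right one and broadly matches the paper's: decompose $\widehat{\nu}$ into martingale increments, use Hoeffding/moment concentration conditionally on the past, use van der Corput on the quadratic phase to replace the flat-measure decay of $\widehat{\mathbf{1}_{[0,1]}}$ in \cite{ShSu17}, and then Borel--Cantelli over a discrete set of frequencies. The net-plus-gradient reduction to $O(R^{10})$ points per dyadic annulus is a perfectly serviceable alternative to the paper's appeal to Lemma 9A4 of \cite{Wo03}, and the paper's three-way split ($|\xi|\le M_{j+1}$, $M_{j+1}\le|\xi|\le m_{j+1}M_j^2$, $|\xi|\ge m_{j+1}M_j^2$) is essentially the same bookkeeping as your deterministic/medium/random split. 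Your preference for Burkholder/Rosenthal over the Hoeffding inequality used in the paper is a legitimate technical variant.

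However, there is a genuine gap, and you put your finger on it yourself but do not resolve it. In the middle regime the conditional variance $\sum_{I\subseteq E_{k-1}}\mathbb{E}(|Y_{k,I}|^2\mid\cF_{k-1})$ involves the number of level-$(k-1)$ intervals of $E_{k-1}$ inside small neighbourhoods of the stationary point $x_*=-\xi_1/(2\xi_2)$; replacing this by its expectation is off by a factor up to $\beta_{k-1}$ in the worst case. The global bound $\#\{I\subseteq E_{k-1}\}\lesssim_\epsilon M_{k-1}^{\alpha_0+\epsilon}$ you invoke does not control this, because one needs a \emph{local} density bound. The paper supplies exactly the missing ingredient as Lemma~\ref{sec5lemmuI_i}: an almost-sure estimate $\mu_j(I_i(x_0))\lesssim_\epsilon M_j^\epsilon \beta_i M_i^{-1}$, uniform in $i\le j$ and in the centre $x_0$, proved by a separate Hoeffding/Borel--Cantelli martingale argument for the random variables $\mu_{j+1}(I_i)$. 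Without some such uniform local concentration, the conditional-variance bound used in the tail estimate (your \eqref{FDlemmideq2}-analogue) does not hold almost surely, and the stretched-exponential tail and the final telescoping sum do not close. Since you state this as ``the technical heart of the section'' rather than proving it, the proposal should be regarded as an outline of the correct attack rather than a complete proof.
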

\begin{remark}
    For each $0< \beta <1$, Kaufman \cite{Kauf76} constructed a Salem set of dimension $\beta$ which is a subset of $\bP^1$. The proof of Proposition \ref{PropFDecay} is similar to \cite{Kauf76} in that we used probabilistic argument and Van der Corput lemma. However, we showed that \eqref{Fdecay} is satisfied a.s. in more general settings.
\end{remark}
One of the main ingredients of the proof is Hoeffding's inequality.
\begin{thm}[Hoeffding's inequality \cite{Ho63}]
    Let $X_1, \cdots, X_n$ be independent real random variables such that $a_i \leq X_i \leq b_i$ and $S_n := X_1 + \cdots + X_n$. For $t>0$,
    \begin{equation}\label{eqhoeff}
        \bP \left( \bigg|S_n - \mathbb{E}(S_n) \bigg|>t \right) \leq 2 \exp{\left(\frac{-2t^2}{\sum_{i=1}^n (b_i-a_i)^2} \right)}.
    \end{equation}
\end{thm}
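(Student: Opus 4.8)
The plan is to use the exponential‑moment (Chernoff) method. First I would center the variables: set $Y_i := X_i - \mathbb{E}(X_i)$, so that $\mathbb{E}(Y_i) = 0$ and $Y_i$ takes values in an interval of the same length $b_i - a_i$. Since $\bP(|S_n - \mathbb{E}(S_n)| > t) \le \bP\big(\sum_i Y_i > t\big) + \bP\big(\sum_i(-Y_i) > t\big)$ and each $-Y_i$ satisfies the same hypotheses, it suffices to prove the one‑sided bound $\bP\big(\sum_{i=1}^n Y_i > t\big) \le \exp\big(-2t^2/\sum_i(b_i-a_i)^2\big)$; the two contributions then produce the factor $2$ in \eqref{eqhoeff}. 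For any $s > 0$, Markov's inequality applied to the nonnegative random variable $e^{s\sum_i Y_i}$, together with independence, gives
\begin{equation*}
    \bP\Big(\sum_{i=1}^n Y_i > t\Big) \le e^{-st}\, \mathbb{E}\Big(e^{s\sum_{i=1}^n Y_i}\Big) = e^{-st} \prod_{i=1}^n \mathbb{E}\big(e^{sY_i}\big).
\end{equation*}

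The crux is Hoeffding's lemma: if $Z$ is a random variable with $\mathbb{E}(Z) = 0$ and $Z \in [a,b]$ almost surely, then $\mathbb{E}(e^{sZ}) \le \exp\big(s^2(b-a)^2/8\big)$ for every $s \in \bR$. I would prove it using convexity of $z \mapsto e^{sz}$: for $z \in [a,b]$,
\begin{equation*}
    e^{sz} \le \frac{b-z}{b-a}\,e^{sa} + \frac{z-a}{b-a}\,e^{sb},
\end{equation*}
so taking expectations and using $\mathbb{E}(Z) = 0$ yields $\mathbb{E}(e^{sZ}) \le \frac{b}{b-a}e^{sa} - \frac{a}{b-a}e^{sb} = e^{L(u)}$, where $u = s(b-a)$, $p = -a/(b-a) \in [0,1]$ and $L(u) = -pu + \log\big(1-p+pe^{u}\big)$. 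A direct computation gives $L(0) = L'(0) = 0$ and $L''(u) = q(1-q)$ with $q = q(u) := \frac{pe^u}{1-p+pe^u} \in (0,1)$, hence $L''(u) \le 1/4$ for all $u$. By Taylor's theorem with Lagrange remainder, $L(u) \le u^2/8$, which is exactly the asserted bound.

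Applying the lemma to each $Y_i$ gives $\bP\big(\sum_i Y_i > t\big) \le \exp\big(-st + \tfrac{s^2}{8}\sum_{i=1}^n (b_i-a_i)^2\big)$, and minimizing the exponent over $s > 0$ — the minimum is attained at $s = 4t/\sum_i (b_i-a_i)^2$ — produces $\exp\big(-2t^2/\sum_{i=1}^n (b_i-a_i)^2\big)$. Adding the analogous lower‑tail estimate gives \eqref{eqhoeff}. The one point demanding care is the uniform bound $L''(u) \le 1/4$ in Hoeffding's lemma: this is where the constant $8$ in the lemma — equivalently the $2$ in the exponent of \eqref{eqhoeff} — originates, and it comes down to the elementary inequality $q(1-q) \le 1/4$ rather than to any probabilistic input. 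Everything else is routine once independence has been used to factor the moment generating function.
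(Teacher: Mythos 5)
Your argument is correct and complete: it is the standard Chernoff-bound proof via Hoeffding's lemma, with the key computation $L''(u)=q(1-q)\le 1/4$ and the optimization $s=4t/\sum_i(b_i-a_i)^2$ both carried out correctly. The paper does not prove this theorem at all --- it is quoted as a classical result from the cited reference --- and your proof is essentially the one found there and in standard texts, so there is nothing to reconcile.
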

According to the size of $|\xi|$, we will use different arguments to obtain estimates for $|\widehat{\nu}_{j+1}(\xi) - \widehat{\nu}_j(\xi)|$. If $|\xi| \leq M_{j+1}$, we use \eqref{eqhoeff}. If $|\xi| \geq m_jM_{j+1}^2$, we use van der Corput lemma. Lastly, if $M_{j+1} \leq |\xi| \leq m_{j+1}M_{j}^2$, we use both of them. 
\begin{lem}\label{FDlemSmall}
    For fixed $\xi$ such that $|\xi| \leq M_{j+1}$, we have the following tail bound. There exists a constant $C_1>0 $ such that
    \begin{equation}\label{Fdlemsmalleq}
        \bP\left( |\widehat{\nu}_{j+1} (\xi) -\widehat{\nu}_j(\xi) | \geq M_j^{-\sigma/2} \norm{\mu_j}^{1/2} \bigg| E_j \right)  \lesssim \exp(-C_1 M_{j}^{1-\sigma} \beta_j \beta_{j+1}^{-2}).
    \end{equation}
\end{lem}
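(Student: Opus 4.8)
The plan is to write the martingale increment, conditioned on $E_j$, as a sum of independent, mean-zero, uniformly bounded random variables, one for each interval of $\cI_j$ contained in $E_j$, and then invoke Hoeffding's inequality. Unwinding the definition of $\nu_j$,
\begin{equation*}
	\widehat{\nu}_{j+1}(\xi)-\widehat{\nu}_j(\xi)=\int_{[0,1]} e(-\xi_1 x_1-\xi_2 x_1^2)(\mu_{j+1}-\mu_j)(x_1)\,dx_1=\sum_{I_j\subseteq E_j} W_{I_j},
\end{equation*}
where $W_{I_j}:=\int_{I_j} e(-\xi_1 x_1-\xi_2 x_1^2)(\mu_{j+1}-\mu_j)(x_1)\,dx_1$ and the sum runs over the $I_j\in\cI_j$ with $I_j\subseteq E_j$, the only intervals on which $\mu_{j+1}-\mu_j$ is not identically zero. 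Conditioned on $E_j$, the hypothesis $\bE(\mu_{j+1}(x)\mid E_j)=\mu_j(x)$ together with Fubini gives $\bE(W_{I_j}\mid E_j)=0$, and since the sets $I_j\cap E_{j+1}$ are chosen independently across the intervals $I_j\subseteq E_j$, the family $\{W_{I_j}\}$ is independent given $E_j$.

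Next I would record the deterministic input to \eqref{eqhoeff}. From $\mu_j=\beta_j\mathbf 1_{E_j}$, $\mu_{j+1}=\beta_{j+1}\mathbf 1_{E_{j+1}}$, $E_{j+1}\subseteq E_j$ and the monotonicity of $\{\beta_j\}$ one gets $\lvert\mu_{j+1}-\mu_j\rvert\le 2\beta_{j+1}$ pointwise, hence $\lvert W_{I_j}\rvert\le 2\beta_{j+1}M_j^{-1}$, uniformly in $\xi$, since $\lvert I_j\rvert=M_j^{-1}$. Also $\norm{\mu_j}=\beta_j\lvert E_j\rvert$, so the number of intervals $I_j\subseteq E_j$ equals $M_j\lvert E_j\rvert=M_j\beta_j^{-1}\norm{\mu_j}$.

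Now I would split $W_{I_j}$ into its real part $\re W_{I_j}$ and imaginary part $\im W_{I_j}$ and apply \eqref{eqhoeff} to each. Each part is valued in an interval of length at most $4\beta_{j+1}M_j^{-1}$, so $\sum_{I_j\subseteq E_j}(b_{I_j}-a_{I_j})^2\le 16\,\norm{\mu_j}\,\beta_{j+1}^2\,M_j^{-1}\beta_j^{-1}$. Taking $t=2^{-1/2}M_j^{-\sigma/2}\norm{\mu_j}^{1/2}$ in \eqref{eqhoeff} turns the exponent $-2t^2/\bigl(\sum_{I_j}(b_{I_j}-a_{I_j})^2\bigr)$ into $-\tfrac1{16}M_j^{1-\sigma}\beta_j\beta_{j+1}^{-2}$, so each of the two parts satisfies the corresponding tail bound. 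Since $\lvert\widehat{\nu}_{j+1}(\xi)-\widehat{\nu}_j(\xi)\rvert\ge M_j^{-\sigma/2}\norm{\mu_j}^{1/2}$ forces $\lvert\re\bigl(\widehat{\nu}_{j+1}(\xi)-\widehat{\nu}_j(\xi)\bigr)\rvert>t$ or $\lvert\im\bigl(\widehat{\nu}_{j+1}(\xi)-\widehat{\nu}_j(\xi)\bigr)\rvert>t$, a union bound over the two events yields \eqref{Fdlemsmalleq} with $C_1=\tfrac1{16}$.

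I do not expect a genuine obstacle here; the estimate is a direct conditional application of Hoeffding's inequality. The only points to watch are passing from the complex increments to their real and imaginary parts before using \eqref{eqhoeff}, and carrying the ($E_j$-measurable but random) factor $\norm{\mu_j}$ consistently through both sides of \eqref{Fdlemsmalleq}. It is also worth noting that the hypothesis $\lvert\xi\rvert\le M_{j+1}$ is not actually used in this bound: the estimate $\lvert W_{I_j}\rvert\le 2\beta_{j+1}M_j^{-1}$ holds for every $\xi$, and the restriction on $\lvert\xi\rvert$ only marks the frequency range in which this crude, cancellation-free estimate is strong enough, the larger frequencies being handled by van der Corput's lemma in the subsequent lemmas.
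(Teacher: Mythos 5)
Your proof is correct and follows essentially the same approach as the paper: decompose the conditional martingale increment as a sum over intervals $I_j\subseteq E_j$ of conditionally independent, uniformly bounded (complex) random variables, split into real and imaginary parts, and apply Hoeffding. The only cosmetic difference is that you work directly with the centered summands $W_{I_j}=\int_{I_j}e(-\xi_1x_1-\xi_2x_1^2)(\mu_{j+1}-\mu_j)\,dx_1$ while the paper sets $X_{I_j}=\int_{I_j}\beta_{j+1}\mathbf 1_{E_{j+1}}e(\cdot)\,dx_1$ and subtracts the conditional mean at the Hoeffding step; since $W_{I_j}=X_{I_j}-\bE(X_{I_j}\mid E_j)$, the two are the same argument up to a factor of $2$ in the pointwise bound, which is absorbed into the implicit constant.
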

\begin{proof}
    The proof is similar to the proof of Theorem 4.2 in \cite{ShSu17}. For $I_j \in \cI_j$, we consider
    \begin{equation*}
        X_{I_j} = \int_{I_j} \beta_{j+1} \mathbf{1}_{E_{j+1}} e(-x \xi_1 - x^2 \xi_2) dx
    \end{equation*}
    for $\xi=(\xi_1,\xi_2) \in \bR^2$. Then, $\sum_{I_j \in \cI_j} X_{I_j} = \widehat{\nu}_{j+1}(\xi)$ and $\mathbb{E}(\mu_{j+1}(x)| E_j) = \mu_j(x)$ implies that
    \begin{equation*}
        \begin{split}
            \mathbb{E}(\sum_{I_j\in \cI_j} X_{I_j} |E_j) &= \mathbb{E}(\widehat{\nu}_{j+1}(\xi) | E_j)\\
            &= \int e(-x\xi_1 - x^2\xi_2 ) \mathbb{E}(\mu_{j+1}(x)|E_j)dx\\
            &=\int e(-x\xi_1 -x^2\xi_2) \mu_j(x)dx = \widehat{\nu}_j(\xi).
        \end{split}
    \end{equation*}
    Let $S_{j} : = \sum_{I_j \in \cI_j} X_{I_j}$, then 
    \begin{equation*}
        |S_{j} - \mathbb{E}(S_{j})| = |\widehat{\nu}_{j+1}(\xi) - \widehat{\nu}_j(\xi)|
    \end{equation*}
    conditioned on $E_j$. Also, note that $X_{I_j}$ are independent conditioned on $E_j$, since $I_j \cap E_{j+1}$ are chosen independently.
    
    Next, we need to estimate $ |X_{I_j}|$ for $I_j \subseteq E_j$. The random variables $X_{I_j}$ are not real-valued. But by considering real and imaginary parts of $X_{I_j}$ separately, we can use Hoeffding's inequality \eqref{eqhoeff} with $\sum_{i=1}^n (b_i-a_i)^2$ replaced by the upper bound of $\sum_{I_j }|X_{I_j}|^2$.
    
    We have
    \begin{equation}\label{FDtrivial}
        |X_{I_j}| \leq \beta_{j+1} M_j^{-1}
    \end{equation}
    and the number of $I_j \subseteq E_j$ is $M_j\beta_j^{-1} \norm{\mu_j}$. Thus, we obtain
    \begin{equation*}
        \sum_{I_j \subseteq E_j} |X_{I_j}|^2 \leq\frac{\beta_{j+1}^2}{\beta_j M_j} \norm{\mu_j}.
    \end{equation*}
    For $t = M_{j}^{-\sigma/2} \norm{\mu_j}^{1/2}$, Hoeffding's inequality \eqref{eqhoeff} implies \eqref{Fdlemsmalleq}.
    \end{proof}
    Then, the unconditional version of Lemma \ref{FDlemSmall} easily follows.
    \begin{cor}\label{FDlemSmall2}
    For fixed $\xi$, if $|\xi| \leq M_{j+1}$, then there exists a constant $C_1>0 $ such that
    \begin{equation*}
        \bP\left( |\widehat{\nu}_{j+1} (\xi) -\widehat{\nu}_j(\xi) | \geq M_j^{-\sigma/2} \norm{\mu_j}^{1/2} \right)  \lesssim \exp(-C_1 M_{j}^{1-\sigma} \beta_j \beta_{j+1}^{-2}).
    \end{equation*}
    \end{cor}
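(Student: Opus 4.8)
The plan is to deduce the unconditional estimate directly from the conditional bound \eqref{Fdlemsmalleq} of Lemma~\ref{FDlemSmall} by averaging over the randomness of $E_j$. The crucial observation is that the right-hand side of \eqref{Fdlemsmalleq}, namely $\exp(-C_1 M_j^{1-\sigma}\beta_j\beta_{j+1}^{-2})$, is \emph{deterministic}: it depends only on $M_j$, $\beta_j$, $\beta_{j+1}$ and $\sigma$, and not on the realization of $E_j$. Hence removing the conditioning costs nothing.

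Concretely, first I would note that $\widehat{\nu}_j(\xi)$ and $\norm{\mu_j}$ are both measurable with respect to the $\sigma$-algebra generated by $E_j$, so that conditioned on $E_j$ the event $\{\,|\widehat{\nu}_{j+1}(\xi)-\widehat{\nu}_j(\xi)| \geq M_j^{-\sigma/2}\norm{\mu_j}^{1/2}\,\}$ has fixed centering $\widehat{\nu}_j(\xi)$ and fixed threshold $M_j^{-\sigma/2}\norm{\mu_j}^{1/2}$, which is exactly the situation addressed by Lemma~\ref{FDlemSmall}. Then, by the tower property of conditional expectation,
\begin{equation*}
\bP\!\left( |\widehat{\nu}_{j+1}(\xi)-\widehat{\nu}_j(\xi)| \geq M_j^{-\sigma/2}\norm{\mu_j}^{1/2} \right) = \mathbb{E}\!\left[\, \bP\!\left( |\widehat{\nu}_{j+1}(\xi)-\widehat{\nu}_j(\xi)| \geq M_j^{-\sigma/2}\norm{\mu_j}^{1/2} \ \big|\ E_j \right)\right],
\end{equation*}
and bounding the inner probability by \eqref{Fdlemsmalleq} shows the integrand is at most a constant multiple of $\exp(-C_1 M_j^{1-\sigma}\beta_j\beta_{j+1}^{-2})$; since the expectation of a constant is that constant, the claimed estimate follows.

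I do not expect any real obstacle here: this corollary is a bookkeeping step that recasts Lemma~\ref{FDlemSmall} in a form convenient for later summing over frequencies $\xi$ without dragging the conditioning along. The only thing worth double-checking is that the implicit constant in \eqref{Fdlemsmalleq} is itself deterministic, which it is: in the proof of Lemma~\ref{FDlemSmall} it comes from Hoeffding's inequality \eqref{eqhoeff} applied with the variance bound $\sum_{I_j \subseteq E_j}|X_{I_j}|^2 \leq \beta_{j+1}^2\beta_j^{-1}M_j^{-1}\norm{\mu_j}$ and the threshold $t = M_j^{-\sigma/2}\norm{\mu_j}^{1/2}$, for which the ratio $t^2/\sum_{I_j}|X_{I_j}|^2 = M_j^{1-\sigma}\beta_j\beta_{j+1}^{-2}$ is independent of the realization of $E_j$ (the factor $\norm{\mu_j}$ cancels), so the resulting exponential tail bound is uniform over the conditioning.
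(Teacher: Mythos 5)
Your proposal is correct and matches the paper's argument: the paper's proof is exactly the observation that the conditional bound of Lemma \ref{FDlemSmall} is uniform in $E_j$, so the unconditional statement follows by the law of total probability. You have merely spelled out the cancellation of $\norm{\mu_j}$ that makes the exponential tail deterministic, which is the same reasoning in more detail.
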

    \begin{proof}
        Note that Lemma \ref{FDlemSmall} holds uniformly in $E_j$. Thus, the result can be immediately derived from the law of total probability.
    \end{proof}
    For the rest of this section, we will use the following notations additionally. For fixed $\xi=(\xi_1,\xi_2) \in \bR^2$ such that $\xi \neq 0$, consider $I_{j+1}(k) := M_{j+1}^{-1}(k+[0,1]) \in \mathcal{I}_{j+1} $ and let $m(k)$ be the minimum of $|\xi_1 + 2x\xi_2|$ on $ I_{j+1}(k)$. Also, if $\xi_2 \neq 0$, let $m(k_0)$ be the smallest among $m(k)$ and $x_0$ be a point in $[0,1]$ such that $m(k_0)= |\xi_1+2x_0\xi_2|$. If there are two $k$'s such that $m(k)$ is the smallest, $k_0$ can be either of them.
    \begin{lem}\label{FDlemLarge}
    For any $|\xi| \geq m_{j+1}M_{j}^2$, we have
    \begin{equation}\label{FDLarge}
        |\widehat{\nu}_{j+1}(\xi) -\widehat{\nu}_j(\xi) | \lesssim_\epsilon M_{j+1}^{1-\alpha_0+ \epsilon}  |\xi|^{-1/2}.
    \end{equation}
    \end{lem}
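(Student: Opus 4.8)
The plan is to bound $|\widehat{\nu}_{j+1}(\xi)|$ and $|\widehat{\nu}_j(\xi)|$ separately, each by $\lesssim_\epsilon M_{j+1}^{1-\alpha_0+\epsilon}|\xi|^{-1/2}$; adding the two estimates yields \eqref{FDLarge}. Write $\phi(x)=-\xi_1x-\xi_2x^2$, so that $\widehat{\nu}_i(\xi)=\int_0^1\mu_i(x)e(\phi(x))\,dx$ for $i\in\{j,j+1\}$ and $\phi''\equiv-2\xi_2$. The whole computation is carried out through the first- and second-derivative van der Corput estimates, and it is organized according to which of $|\xi_1|,|\xi_2|$ dominates. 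Throughout I use that (a.s., or by construction) $\|\mu_i\|\lesssim 1$, so $E_i$ is a union of at most $\lesssim M_i\beta_i^{-1}$ intervals of length $M_i^{-1}$ and hence $\mathrm{TV}(\mu_i)\lesssim\beta_i\cdot M_i\beta_i^{-1}=M_i$; I also use $\beta_i\le M_i^{1-\alpha_0+\epsilon}$ from \eqref{FDcond2_1} and $m_{j+1}\lesssim_\epsilon M_j^\epsilon$ from \eqref{FDcond1}. The hypothesis $|\xi|\ge m_{j+1}M_j^2$ will be used twice: once as $|\xi|^{-1/2}\le m_{j+1}^{-1/2}M_j^{-1}$, and once as $|\xi|\gtrsim_\epsilon M_{j+1}^{2-\epsilon}$ (from $m_{j+1}M_j^2=M_{j+1}^2/m_{j+1}$ and $M_j\le M_{j+1}$).

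If $|\xi_1|\ge 10|\xi_2|$ then $|\phi'(x)|=|\xi_1+2x\xi_2|\gtrsim|\xi|$ on $[0,1]$, so the non‑stationary (first‑derivative) van der Corput estimate with the total variation bound gives $|\widehat{\nu}_i(\xi)|\lesssim|\xi|^{-1}(\|\mu_i\|_\infty+\mathrm{TV}(\mu_i))\lesssim M_i|\xi|^{-1}\le M_{j+1}|\xi|^{-1}$. Writing $|\xi|^{-1}=|\xi|^{-1/2}|\xi|^{-1/2}$ and inserting $|\xi|^{-1/2}\le m_{j+1}^{-1/2}M_j^{-1}$ turns this into $M_{j+1}m_{j+1}^{-1/2}M_j^{-1}|\xi|^{-1/2}=m_{j+1}^{1/2}|\xi|^{-1/2}\lesssim_\epsilon M_j^{\epsilon/2}|\xi|^{-1/2}\le M_{j+1}^{1-\alpha_0+\epsilon}|\xi|^{-1/2}$, using $\alpha_0<1$.

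The main case is $|\xi_1|<10|\xi_2|$, so $|\xi_2|\gtrsim|\xi|\ge m_{j+1}M_j^2\ge 2M_j^2$ and $\phi$ has genuine curvature on every interval of length $M_j^{-1}$. Decompose $[0,1]$ into the intervals $I$ of length $M_j^{-1}$; only the $\lesssim M_i\beta_i^{-1}$ of them contained in $E_i$ contribute to $\widehat{\nu}_i$, on each such $I$ one has $\|\mu_i\|_{L^\infty(I)}\le\beta_i$, and (since $\mu_j$ is constant on $I$, while $\mu_{j+1}|_I$ is $\beta_{j+1}$ times the indicator of a union of at most $m_{j+1}$ subintervals) $\mathrm{TV}_I(\mu_i)\lesssim\beta_i m_{j+1}$, with $\mathrm{TV}_I(\mu_j)=0$. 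Let $x_0=-\xi_1/(2\xi_2)$. For the $O(1)$ intervals $I$ within distance $M_j^{-1}$ of $x_0$, apply the second‑derivative van der Corput estimate to get $\lesssim|\xi_2|^{-1/2}(\beta_i+\mathrm{TV}_I(\mu_i))\lesssim|\xi_2|^{-1/2}\beta_i m_{j+1}$ for each; for intervals $I$ at dyadic distance $\approx 2^{-\ell}$ from $x_0$ one has $|\phi'|\gtrsim|\xi_2|2^{-\ell}$ on $I$, so the first‑derivative van der Corput estimate gives $\lesssim(|\xi_2|2^{-\ell})^{-1}\beta_i m_{j+1}$ for each, and the number of contributing intervals at that distance is $\lesssim\min(2^{-\ell}M_j,\,M_i\beta_i^{-1})$. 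Summing the resulting geometric‑plus‑logarithmic series over $\ell=0,\dots,\log_2 M_j$ produces a bound $\lesssim\beta_i m_{j+1}\big(|\xi_2|^{-1/2}+M_j\log(M_j)|\xi_2|^{-1}\big)$. The first term is $\lesssim_\epsilon M_{j+1}^{1-\alpha_0+\epsilon}|\xi|^{-1/2}$ since $\beta_i\le M_{j+1}^{1-\alpha_0+\epsilon/2}$ and $m_{j+1}\lesssim_\epsilon M_{j+1}^{\epsilon/2}$; in the second term the extra $|\xi|^{-1/2}\le m_{j+1}^{-1/2}M_j^{-1}$ cancels the factor $m_{j+1}M_j$, leaving only $m_{j+1}^{1/2}\log M_j\lesssim_\epsilon M_{j+1}^{\epsilon/2}$, so it too is $\lesssim_\epsilon M_{j+1}^{1-\alpha_0+\epsilon}|\xi|^{-1/2}$. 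This gives $|\widehat{\nu}_i(\xi)|\lesssim_\epsilon M_i^{1-\alpha_0+\epsilon}|\xi|^{-1/2}$ and hence \eqref{FDLarge}.

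The hard point — and the reason the precise threshold $m_{j+1}M_j^2$ (rather than $M_j^2$) is imposed — is to recover the ``fractal'' prefactor $M_{j+1}^{1-\alpha_0}$ instead of the trivial $M_{j+1}$: applying the second‑derivative van der Corput bound naively on each of the $\approx M_j^{\alpha_0}$ length‑$M_j^{-1}$ intervals of $E_j$ overshoots by a power $M_{j+1}^{\alpha_0}$. The saving comes entirely from the $|\phi'|^{-1}$ decay away from the stationary point together with the sparsity of $E_j$ at scale $M_j^{-1}$ (the $\min(2^{-\ell}M_j,M_i\beta_i^{-1})$ count), and the $m_{j+1}$ produced by $\mathrm{TV}_I(\mu_{j+1})$ is exactly absorbed by the room in $|\xi|\ge m_{j+1}M_j^2$. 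No probabilistic input is needed in this regime; the martingale structure and Hoeffding's inequality are used only for $|\xi|\le M_{j+1}$ and for the intermediate range.
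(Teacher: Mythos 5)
Your proof is correct, but it takes a genuinely different route from the paper's. The paper decomposes $[0,1]$ into the intervals of length $M_{j+1}^{-1}$ (the scale at which $\mu_{j+1}$ is locally constant), so it can use the plain first-derivative van der Corput estimate away from the critical point and a direct length bound on the two or three near-critical intervals, then sums linearly in the index $p$ to produce the logarithm. You instead decompose at the coarser scale $M_j^{-1}$, which forces you to bring in the total variation of $\mu_{j+1}$ on each piece (the weighted, second-derivative van der Corput estimate with $\mathrm{TV}_I(\mu_{j+1})\lesssim m_{j+1}\beta_{j+1}$), and you organize the off-critical sum dyadically rather than linearly. Both arguments spend the hypothesis $|\xi|\ge m_{j+1}M_j^2$ in the same place: to absorb the extra factor of $M_{j+1}$ (for the paper) or $m_{j+1}M_j$ (for you) that comes from the non-stationary sum. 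The paper's choice of scale is a bit cleaner because it never needs the TV form of van der Corput. Two small remarks on your write-up: (i) the count $\min(2^{-\ell}M_j,\,M_i\beta_i^{-1})$ suggests the sparsity of $E_i$ at scale $M_j^{-1}$ is part of the saving, but in your actual geometric-plus-logarithmic sum only the $2^{-\ell}M_j$ branch is ever used, so sparsity plays no role here — the saving comes entirely from the $|\phi'|^{-1}$ decay, exactly as in the paper; (ii) your use of $\norm{\mu_i}\lesssim 1$ introduces an a.s.\ caveat into what the paper keeps a fully deterministic estimate. This is harmless for the application in Proposition \ref{PropFDecay}, and you can eliminate it by replacing the count $M_i\beta_i^{-1}$ with the trivial $M_i$ throughout (the resulting extra factor $\beta_{i}$ is absorbed just as in the paper's bound $\beta_{j+1}M_{j+1}|\xi|^{-1}$).
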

    \begin{proof}
    By Van der Corput lemma (see for example \cite[Proposition 2.2, Chapter 8]{StSh11}), we get
    \begin{equation}\label{mkneq0}
        \bigg| \int_{I_{j+1}(k)}  e(-x\xi_1 -x^2\xi_2)dx \bigg| \lesssim m(k)^{-1} \qquad \mathrm{if} \ m(k)\neq 0.
    \end{equation}
    If $|\xi_1 | \geq 10 |\xi_2|$, then $|\xi_1 + 2x\xi_2| \gtrsim |\xi|$. Therefore,
    \begin{equation}\label{FDlargebigxi_1}
        \begin{split}
            |\widehat{\nu_{j+1}}(\xi)| &\leq \sum_{I_{j+1}(k) \subseteq E_{j+1}} \bigg| \int_{I_{j+1}(k)} \beta_{j+1} e(-x\xi_1 -x^2\xi_2) dx \bigg| \\
            &\lesssim \beta_{j+1}M_{j+1}|\xi|^{-1}\\
            &\leq \beta_{j+1} m_{j+1}^{1/2} |\xi|^{-1/2}.
        \end{split}
    \end{equation}
    In the last inequality, we used that $|\xi| \geq m_{j+1}M_j^2$.
    
    If $|\xi_1 | \leq 10 |\xi_2|$, we consider $I_{j+1}(k_0+p)$ for $p =-1,0$ or $1$. The interval $I_{j+1}(k_0+p) $ can be divided into $ I_1 $ and $ I_2$ where $I_1 = \{x : |x-x_0| \leq |\xi|^{-1/2} \}$ and $I_2 = \{ x : |x-x_0| \geq |\xi|^{-1/2} \}$. For $x \in I_2$, since $|\xi_1 + 2x\xi_2|$ attains its minimum at $x_0$,
    \begin{equation*}
    \begin{split}
        |\xi_1 +2x\xi_2 | &= |(\xi_1 +2x_0 \xi_2) +2(x-x_0)\xi|\\
        &\geq 2|x-x_0| |\xi| \gtrsim |\xi|^{1/2}.
    \end{split}
    \end{equation*}
    Therefore, \eqref{mkneq0} implies that
    \begin{equation}\label{sec5intk_0}
    \begin{split}
        \bigg| \int_{I_{j+1}(k_0+p)} \beta_{j+1} e(-x\xi_1-x^2\xi_2) dx \bigg| &\lesssim \beta_{j+1} |I_1| + \beta_{j+1}|\xi|^{-1/2}\\
        &\lesssim \beta_{j+1} |\xi|^{-1/2}.
    \end{split}
    \end{equation}
    If $p \neq -1,0,1$, then $m(k_0+p) \geq 2|x-x_0||\xi| \gtrsim p M_{j+1}^{-1}|\xi| $. Using \eqref{mkneq0}, we obtain
    \begin{equation}\label{sec5intk_0+p}
        \bigg| \int_{I_{j+1}(k_0+p)} \beta_{j+1} e(-x\xi_1-x^2\xi_2) dx \bigg| \lesssim \frac{\beta_{j+1}M_{j+1}}{p|\xi|}.
    \end{equation}
    Combining \eqref{sec5intk_0} and \eqref{sec5intk_0+p}, we get
    \begin{equation*}
        \begin{split}
            |\widehat{\nu_{j+1}}(\xi)| &\leq \sum_{I_{j+1}(k) \subseteq E_{j+1}} \bigg| \int_{I_{j+1}(k)} \beta_{j+1} e(-x\xi_1 -x^2\xi_2) dx \bigg| \\
            &\lesssim \beta_{j+1}|\xi|^{-1/2} + \sum_{p=1}^{M_{j+1}} \frac{\beta_{j+1}M_{j+1}}{p|\xi|}\\
            &\lesssim \beta_{j+1}m_{j+1}^{1/2} \log(M_{j+1}) |\xi|^{-1/2}.
        \end{split}
    \end{equation*}
    
    We considered when $|\xi_1| \geq 10|\xi_2|$ and $|\xi_1 | \leq 10|\xi_2|$. In either cases, \eqref{FDcond1} and \eqref{FDcond2_1} imply that
    \begin{equation*}
    \begin{split}
        |\widehat{\nu_{j+1}}(\xi)| &\lesssim \beta_{j+1}m_{j+1}^{1/2} \log(M_{j+1}) |\xi|^{-1/2}\\
            &\lesssim_\epsilon M_{j+1}^{1-\alpha_0+ \epsilon} |\xi|^{-1/2}
    \end{split} 
    \end{equation*}
    for any $\xi$. The same argument works for $\widehat{\nu}_{j}$ with $j+1$ replaced by $j$. Thus, we have $|\widehat{\nu}_{j}(\xi)| \lesssim_\epsilon M_j^{1-\alpha_0+\epsilon} |\xi|^{-1/2}$. Since $M_j \leq M_{j+1}$, we get \eqref{FDLarge}.
\end{proof}

\begin{lem}\label{FDlemmid}
    For fixed $\xi$, if $M_{j+1} \leq |\xi| \leq m_{j+1} M_j^2$, let $k$ be an integer such that $0 \leq k \leq j-1$ and
    \begin{equation*}
        M_{j+1}M_k\leq |\xi| \leq M_{j+1}M_{k+1}.
    \end{equation*}
    For such $k$, the following tail bounds hold. \\
    If $|\xi_1| \geq 10|\xi_2| $, there exists a constant $C_2>0 $ such that
    \begin{equation}\label{FDlemmideq1}
        \bP\left( |\widehat{\nu}_{j+1} (\xi) -\widehat{\nu}_j(\xi) | \geq M_{j}^{-(\alpha_0-\epsilon)/2} \left(  \frac{M_j}{|\xi|} \right)^{1/2}  \norm{\mu_j}^{1/2} \bigg| E_j \right)  \lesssim \exp(-C_2 M_{j}^{1-\alpha_0+\epsilon} \beta_j \beta_{j+1}^{-2} m_{j+1}^{-1})
    \end{equation}
    If $|\xi_1| \leq 10|\xi_2| $, there exists a constant $C_3>0 $ such that
    \begin{equation}\label{FDlemmideq2}
    \begin{split}
        \bP\left( |\widehat{\nu}_{j+1} (\xi) -\widehat{\nu}_j(\xi) | \geq M_{j}^{-(\alpha_0-\epsilon)/2} \bigg[ \sum_{i=0}^{k} \left(\frac{M_{i+1}}{M_k} \right)^2 \mu_j(I_{i}(x_0))\bigg]^{1/2}  \bigg| E_j \right)& \\
        \lesssim \exp(-C_3 M_{j}^{1-\alpha_0+\epsilon} \beta_j \beta_{j+1}^{-2})&
    \end{split}
    \end{equation}
    where $I_i(x_0)$ is an interval of length $M_i^{-1}$ centered at $x_0$.
\end{lem}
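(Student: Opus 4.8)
The plan is to reuse the probabilistic scheme of Lemma~\ref{FDlemSmall}, replacing the trivial bound on $|X_{I_j}|$ by oscillatory estimates. For $I_j\in\cI_j$ put $X_{I_j}=\int_{I_j}\beta_{j+1}\mathbf{1}_{E_{j+1}}e(-x\xi_1-x^2\xi_2)\,dx$, so that $\sum_{I_j\in\cI_j}X_{I_j}=\widehat{\nu}_{j+1}(\xi)$; as in Lemma~\ref{FDlemSmall}, conditioned on $E_j$ the variables $\{X_{I_j}:I_j\subseteq E_j\}$ are independent with $\bE(\sum_{I_j}X_{I_j}\mid E_j)=\widehat{\nu}_j(\xi)$, hence $|\sum_{I_j}X_{I_j}-\bE(\sum_{I_j}X_{I_j})|=|\widehat{\nu}_{j+1}(\xi)-\widehat{\nu}_j(\xi)|$ conditioned on $E_j$. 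Applying Hoeffding's inequality to the real and imaginary parts (with $\sum_i(b_i-a_i)^2$ replaced by an upper bound for $\sum_{I_j\subseteq E_j}|X_{I_j}|^2$) and then the law of total probability, it suffices to produce a deterministic bound $|X_{I_j}|\le B_{I_j}$, $I_j\subseteq E_j$, satisfying $\sum_{I_j\subseteq E_j}B_{I_j}^2\lesssim\beta_j^{-1}M_j\,m_{j+1}^2\beta_{j+1}^2|\xi|^{-2}\norm{\mu_j}$ when $|\xi_1|\ge10|\xi_2|$, and $\sum_{I_j\subseteq E_j}B_{I_j}^2\lesssim\beta_j^{-1}M_j^{-1}\beta_{j+1}^2\sum_{i=0}^{k}(M_{i+1}/M_k)^2\mu_j(I_i(x_0))$ when $|\xi_1|\le10|\xi_2|$. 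Substituting these into Hoeffding and using $M_{j+1}=m_{j+1}M_j\le|\xi|$ in the first case and $M_{j+1}M_k\le|\xi|$ in the second produces exactly the exponents $M_j^{1-\alpha_0+\epsilon}\beta_j\beta_{j+1}^{-2}m_{j+1}^{-1}$ and $M_j^{1-\alpha_0+\epsilon}\beta_j\beta_{j+1}^{-2}$ appearing in \eqref{FDlemmideq1}--\eqref{FDlemmideq2}.

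The source of $B_{I_j}$ is the van der Corput estimate \eqref{mkneq0}: $I_j$ is a union of $m_{j+1}$ intervals $I_{j+1}(k')\in\cI_{j+1}$, the phase derivative $-2\pi(\xi_1+2x\xi_2)$ is monotone in $x$, so on those $I_{j+1}(k')\subseteq E_{j+1}$ one has $|\int_{I_{j+1}(k')}e(-x\xi_1-x^2\xi_2)\,dx|\lesssim\min(M_{j+1}^{-1},\lambda_{I_j}^{-1})$ with $\lambda_{I_j}=\min_{x\in I_j}|\xi_1+2x\xi_2|$, and $B_{I_j}:=m_{j+1}\beta_{j+1}\min(M_{j+1}^{-1},\lambda_{I_j}^{-1})$ works (and is always $\le\beta_{j+1}M_j^{-1}$). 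When $|\xi_1|\ge10|\xi_2|$ we have $|\xi_1+2x\xi_2|\ge|\xi_1|-2|\xi_2|\gtrsim|\xi|\ge M_{j+1}$ throughout $[0,1]$, so $\lambda_{I_j}\gtrsim|\xi|$ and $B_{I_j}\lesssim m_{j+1}\beta_{j+1}|\xi|^{-1}$ for every $I_j\subseteq E_j$; since there are $\beta_j^{-1}M_j\norm{\mu_j}$ such intervals, summing squares gives the required bound, and then Hoeffding together with $|\xi|\ge m_{j+1}M_j$ yields \eqref{FDlemmideq1}.

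For $|\xi_1|\le10|\xi_2|$ the geometric input is that $x\mapsto|\xi_1+2x\xi_2|$ is $\mathrm V$-shaped with vertex at the true zero $-\xi_1/(2\xi_2)$, whose clamp to $[0,1]$ lies within $M_{j+1}^{-1}$ of $x_0$; hence $|\xi_1+2x\xi_2|\gtrsim|\xi|\,|x-x_0|$ for $x\in[0,1]$ with $|x-x_0|\ge2M_{j+1}^{-1}$. I would then sort $\{I_j\subseteq E_j\}$ into shells $\cA_0,\dots,\cA_k$, from outermost ($\cA_0$) to innermost ($\cA_k$, within distance $\sim M_k^{-1}$ of $x_0$), according to the position of $I_j$ relative to the nested intervals $I_i(x_0)$, so that $|\cA_i|\le\beta_j^{-1}M_j\mu_j(I_i(x_0))$. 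For $I_j$ in a shell with $i\le k-2$ (so $i\le j-3$) one has $|x-x_0|\gtrsim M_{i+1}^{-1}\ge2M_{j+1}^{-1}$ on $I_j$, hence $\lambda_{I_j}\gtrsim|\xi|M_{i+1}^{-1}$ and, using $\min(M_{j+1}^{-1},\lambda_{I_j}^{-1})\le\lambda_{I_j}^{-1}$ together with $|\xi|\ge M_{j+1}M_k$, $B_{I_j}\lesssim m_{j+1}\beta_{j+1}|\xi|^{-1}M_{i+1}\approx\beta_{j+1}M_j^{-1}(M_{i+1}/M_k)$; for the two innermost shells ($i=k-1,k$) the van der Corput gain is only $O(1)$, so the trivial bound $B_{I_j}\le\beta_{j+1}M_j^{-1}$ is used, which suffices because the corresponding coefficients $(M_k/M_k)^2=1$ and $(M_{k+1}/M_k)^2=m_{k+1}^2$ are $\ge1$. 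Summing $B_{I_j}^2$ over all shells gives $\sum_{I_j\subseteq E_j}B_{I_j}^2\lesssim\beta_j^{-1}M_j^{-1}\beta_{j+1}^2\sum_{i=0}^k(M_{i+1}/M_k)^2\mu_j(I_i(x_0))$, and Hoeffding with the threshold of \eqref{FDlemmideq2} produces the claimed exponent.

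I expect the only real difficulty to be the bookkeeping in the case $|\xi_1|\le10|\xi_2|$: pinning down $\lambda_{I_j}\gtrsim|\xi|M_{i+1}^{-1}$ uniformly over each shell $I_i(x_0)\setminus I_{i+1}(x_0)$, correctly handling the innermost scales near $x_0$ (where van der Corput only recovers the trivial bound, and the grid intervals straddling $x_0$ must be absorbed into $\cA_k$) and the case in which the true zero lies outside $[0,1]$ so that $x_0$ is an endpoint, and then matching the resulting weighted sum of squares exactly to $\sum_{i=0}^k(M_{i+1}/M_k)^2\mu_j(I_i(x_0))$. The probabilistic half is a verbatim repetition of Lemma~\ref{FDlemSmall}.
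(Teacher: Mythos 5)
Your proposal is correct and follows essentially the same route as the paper: the same conditional Hoeffding scheme from Lemma~\ref{FDlemSmall}, the same van der Corput bound $|X_{I_j}|\lesssim\beta_{j+1}m_{j+1}\lambda_{I_j}^{-1}$, and the same dyadic-shell decomposition around $x_0$ using $\mu_j(I_i(x_0))$ to count intervals and match the weight $(M_{i+1}/M_k)^2$. The only cosmetic difference is that the paper applies van der Corput to all shells $i\le k-1$ (the gap $M_{i+1}^{-1}-M_j^{-1}\gtrsim M_{i+1}^{-1}$ is valid there since $k\le j-1$) and reserves the trivial bound for $I_k(x_0)$ alone, whereas you fall back to the trivial bound on the two innermost shells; both choices give the required inequality.
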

\begin{proof}
If $|\xi_1| \geq 10|\xi_2|$, by the argument similar to \eqref{FDlargebigxi_1}, we have $|X_{I_j} | \lesssim \beta_{j+1} m_{j+1}|\xi|^{-1} $ and the number of interval $I_{j}$ in $E_j$ is $\beta_j^{-1} M_j \norm{\mu_j}$. Since $M_{j+1 } \leq |\xi|$, we have
\begin{equation*}
    \begin{split}
        \sum_{I_j \subseteq E_j} |X_{I_j}|^2 &\lesssim \beta_{j+1}^2 \beta_j^{-1} m_{j+1} M_{j+1} |\xi|^{-2} \norm{\mu_j}\\
        & \leq \beta_{j+1}^2 \beta_j^{-1} m_{j+1} |\xi|^{-1} \norm{\mu_j}.
    \end{split}
\end{equation*}
For $t= M_j^{(1-\alpha_0+\epsilon)/2} |\xi|^{-1/2} \norm{\mu_j}^{1/2}$, Hoeffding's inequality \eqref{eqhoeff} implies \eqref{FDlemmideq1}.

If $|\xi_1 | \leq 10|\xi_2|$, we consider $I_k(x_0)$ first. If $I_j \subseteq I_k(x_0)$, we use \eqref{FDtrivial}. Since the number of $I_j$ in $I_k(x_0)$ is $\beta_j^{-1}M_j\mu_j (I_k(x_0))$, we obtain that
\begin{equation}\label{sec5X_Iink}
    \sum_{I_j \subseteq I_k(x_0)} |X_{I_j}|^2 \leq \beta_{j+1}^2 \beta_j^{-1} M_j^{-1} \mu_j(I_k(x_0)).
\end{equation}
If $I\subseteq I_{i}(x_0)$ but $I \not\subseteq I_{i+1}(x_0)$ for $0 \leq i \leq k-1$, then $|x-x_0| \geq M_{i+1}^{-1} - M_j^{-1} \gtrsim M_{i+1}^{-1}$ for any $x \in I$. Therefore, 
\begin{equation*}
    |\xi_1+2x\xi_2| \gtrsim |x-x_0| |\xi| \gtrsim M_{i+1}^{-1} |\xi|.
\end{equation*}
We use \eqref{mkneq0} again and obtain that $|X_{I_j}| \lesssim \beta_{j+1} m_{j+1}M_{i+1}|\xi|^{-1}$.
Since the number of $I_j$ in $I_i(x_0)$ is $\lesssim \beta_j^{-1}M_j \mu_j(I_i(x_0))$ and $M_{j+1}|\xi|^{-1} \leq M_k^{-1}$, we get
\begin{equation}
\begin{split}\label{sec5X_Iini}
    \sum_{I_j \subseteq I_i(x_0)} |X_{I_j}|^2 &\lesssim \frac{\beta_{j+1}^2}{\beta_{j}M_j} \frac{M_{j+1}^2}{|\xi|^2} M_{i+1}^2 \mu_j(I_i(x_0))\\
    &\leq \frac{\beta_{j+1}^2}{\beta_j M_j} \left(\frac{M_{i+1}}{M_k} \right)^2 \mu_j(I_i(x_0)).
\end{split} 
\end{equation}
Combining \eqref{sec5X_Iink} and \eqref{sec5X_Iini}, we get
\begin{equation*}
    \sum_{I_j \subseteq E_j} |X_{I_j}|^2 \lesssim \frac{\beta_{j+1}^2}{\beta_j M_j} \sum_{i=0}^k \left(\frac{M_{i+1}}{M_k} \right)^2 \mu_j(I_i(x_0)).
\end{equation*}
For
    \begin{equation*}
        t = M_{j}^{-(\alpha_0-\epsilon)/2} \left( \sum_{i=0}^{k} \left(\frac{M_{i+1}}{M_k} \right)^2 \mu_j(I_i(x_0))\right)^{1/2},
    \end{equation*}
    Hoeffding's inequality \eqref{eqhoeff} implies \eqref{FDlemmideq2}.
\end{proof}
We also have the following unconditional version of Lemma \ref{FDlemmid}.
    \begin{cor}\label{FDlemmid2}
    For fixed $\xi$, if $M_{j+1} \leq |\xi| \leq m_{j+1} M_j^2$, for $k$ defined in Lemma \ref{FDlemmid}, we have the following tail bounds. \\
    If $|\xi_1| \geq 10|\xi_2| $, there exists a constant $C_2>0 $ such that
    \begin{equation*}
        \bP\left( |\widehat{\nu}_{j+1} (\xi) -\widehat{\nu}_j(\xi) | \geq M_{j}^{-(\alpha_0-\epsilon)/2} \left(  \frac{M_j}{|\xi|} \right)^{1/2}  \norm{\mu_j}^{1/2} \right)  \lesssim \exp(-C_2 M_{j}^{1-\alpha_0+\epsilon} \beta_j \beta_{j+1}^{-2} m_{j+1}^{-1})
    \end{equation*}
    If $|\xi_1| \leq 10|\xi_2| $, there exists a constant $C_3>0 $ such that
    \begin{equation*}
    \begin{split}
        \bP\left( |\widehat{\nu}_{j+1} (\xi) -\widehat{\nu}_j(\xi) | \geq M_{j}^{-(\alpha_0-\epsilon)/2} \bigg[ \sum_{i=0}^{k} \left(\frac{M_{i+1}}{M_k} \right)^2 \mu_j(I_{i}(x_0))\bigg]^{1/2}  \right)& \\
        \lesssim \exp(-C_3 M_{j}^{1-\alpha_0+\epsilon} \beta_j \beta_{j+1}^{-2})&
    \end{split}
    \end{equation*}
    where $I_i(x_0)$ is an interval of length $M_i^{-1}$ centered at $x_0$.
    \end{cor}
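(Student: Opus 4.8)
The plan is to deduce the unconditional tail bounds from the conditional ones of Lemma \ref{FDlemmid} by the law of total probability, exactly as in the proof of Corollary \ref{FDlemSmall2}. The essential observation is that, for fixed $\xi$ with $M_{j+1} \leq |\xi| \leq m_{j+1}M_j^2$, both the integer $k$ and the point $x_0$ (hence the intervals $I_i(x_0)$) are determined by $\xi$ alone and do not depend on the random set $E_j$, and that the right-hand sides of \eqref{FDlemmideq1} and \eqref{FDlemmideq2}, namely $\exp(-C_2 M_j^{1-\alpha_0+\epsilon}\beta_j\beta_{j+1}^{-2}m_{j+1}^{-1})$ and $\exp(-C_3 M_j^{1-\alpha_0+\epsilon}\beta_j\beta_{j+1}^{-2})$, are deterministic and in particular independent of $E_j$.

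First I would fix $\xi$ and, in the case $|\xi_1| \geq 10|\xi_2|$, let $A_\xi$ denote the event appearing on the left of \eqref{FDlemmideq1}. Since $\norm{\mu_j}$ is $E_j$-measurable, the threshold $M_j^{-(\alpha_0-\epsilon)/2}(M_j/|\xi|)^{1/2}\norm{\mu_j}^{1/2}$ becomes deterministic once we condition on $E_j$, so Lemma \ref{FDlemmid} applies and gives $\bP(A_\xi \mid E_j) \lesssim \exp(-C_2 M_j^{1-\alpha_0+\epsilon}\beta_j\beta_{j+1}^{-2}m_{j+1}^{-1})$ for every realization of $E_j$. Taking the expectation over $E_j$,
\begin{equation*}
    \bP(A_\xi) = \bE\big[\bP(A_\xi \mid E_j)\big] \lesssim \exp(-C_2 M_j^{1-\alpha_0+\epsilon}\beta_j\beta_{j+1}^{-2}m_{j+1}^{-1}),
\end{equation*}
which is the first assertion.

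Then I would run the identical argument in the case $|\xi_1| \leq 10|\xi_2|$, now with \eqref{FDlemmideq2} in place of \eqref{FDlemmideq1}: the quantity $\sum_{i=0}^{k}(M_{i+1}/M_k)^2 \mu_j(I_i(x_0))$ is $E_j$-measurable, so conditioning on $E_j$ turns the threshold into a constant, Lemma \ref{FDlemmid} supplies the conditional bound $\exp(-C_3 M_j^{1-\alpha_0+\epsilon}\beta_j\beta_{j+1}^{-2})$ uniformly in $E_j$, and averaging over $E_j$ gives the stated unconditional bound. No genuine obstacle arises: the conditional statements of Lemma \ref{FDlemmid} were formulated precisely so that their bounds are uniform in $E_j$, and the only point requiring a word of care is that the random thresholds on the left-hand sides become deterministic upon conditioning, which is exactly why averaging (rather than any more delicate device) closes the argument.
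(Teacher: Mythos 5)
Your argument is correct and is exactly the paper's argument: the paper's proof of the corollary simply observes that the conditional bounds in Lemma \ref{FDlemmid} hold uniformly in $E_j$ and then invokes the law of total probability. Your version spells this out in slightly more detail (noting that $k$, $x_0$, and the thresholds are $E_j$-measurable), but it is the same reasoning.
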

    \begin{proof}
        Note that Lemma \ref{FDlemmid} holds uniformly in $E_j$. Thus, the result immediately follows from the law of total probability.
    \end{proof}

In order to prove Proposition \ref{PropFDecay}, we also need an estimate for $\mu_j(I_i(x_0))$.
\begin{lem}\label{sec5lemmuI_i}
    We have the following estimate a.s. 
    \begin{equation*}
     \mu_j(I_i(x_0)) \lesssim_\epsilon M_j^{\epsilon} \beta_i M_i^{-1}   
    \end{equation*}
    where $j \geq i$ and the implicit constant is uniform in $i, j$ and $x_0$.
\end{lem}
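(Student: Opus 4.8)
The plan is to reduce to intervals of the partition $\cI_i$ and then control $\mu_j(I)$ by a stopping-time argument combined with Hoeffding's inequality, finishing with a union bound and a limiting argument in the constant. Since $I_i(x_0)$ has length $M_i^{-1}$, it is contained in the union of at most two consecutive intervals of $\cI_i$, and since $\mu_j$ is supported on $[0,1]$, it suffices to show that almost surely there is a finite constant $C$, depending only on $\epsilon$ and the realization, with $\mu_j(I)\leq C M_j^{\epsilon}\beta_i M_i^{-1}$ for all $0\leq i\leq j$ and all $I\in\cI_i$. Fix $\epsilon>0$, let $C>0$ be a large parameter to be chosen, and set $\lambda_{i,n}:=C M_n^{\epsilon}\beta_i M_i^{-1}$ for $n\geq i$.

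Next I would fix $I\in\cI_i$ and work with the filtration $\cF_n:=\sigma(E_1,\dots,E_n)$. Reading the last two bullets of the construction conditionally on $\cF_n$, the process $\{\mu_n(I)\}_{n\geq i}$ is an $\cF_n$--martingale, and conditionally on $\cF_n$ one has $\mu_{n+1}(I)=\sum_{J}\mu_{n+1}(J)$, where $J$ ranges over the $\cI_n$--intervals contained in $I\cap E_n$; these summands are independent, each takes values in $[0,\beta_{n+1}M_n^{-1}]$, and each has conditional mean $\beta_n M_n^{-1}$. Let $\tau:=\inf\{n\geq i:\mu_n(I)>\lambda_{i,n}\}$; since $\mu_i(I)\leq\beta_i M_i^{-1}\leq\lambda_{i,i}$ we have $\tau>i$. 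On $\{\tau>n\}$ the number of summands above is at most $\lambda_{i,n}\beta_n^{-1}M_n$, so the sum of their variances is at most $\lambda_{i,n}\beta_{n+1}^{2}\beta_n^{-1}M_n^{-1}$, while $\lambda_{i,n+1}-\lambda_{i,n}\geq(2^{\epsilon}-1)C M_n^{\epsilon}\beta_i M_i^{-1}$ because $m_{n+1}\geq 2$. Applying Hoeffding's inequality \eqref{eqhoeff} to the real and imaginary parts of $\mu_{n+1}(I)-\mu_n(I)$ conditionally on $\cF_n$ then gives, on $\{\tau>n\}$,
\begin{equation*}
    \bP(\tau=n+1\mid\cF_n)\ \lesssim_{\epsilon}\ \exp\!\Big(-c_\epsilon\,C\,M_n^{\epsilon}\,\beta_i M_i^{-1}\,M_n\beta_n\beta_{n+1}^{-2}\Big)
\end{equation*}
for some $c_\epsilon>0$.

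To simplify the exponent I would invoke \eqref{FDcond1} and \eqref{FDcond2_1}: with a small auxiliary parameter $\epsilon'$ (a fixed small multiple of $\epsilon$, available for $n$ beyond some $n_1(\epsilon)$) one has $m_{n+1}\lesssim_{\epsilon'}M_n^{\epsilon'}$ and $M_n^{1-\alpha_0-\epsilon'}\leq\beta_n\leq M_n^{1-\alpha_0+\epsilon'}$, so $M_n\beta_n\beta_{n+1}^{-2}\gtrsim_\epsilon M_n^{\alpha_0-\epsilon/4}$ and $\beta_i M_i^{-1}\gtrsim_\epsilon M_i^{-\alpha_0-\epsilon/4}$; since $n\geq i$, the full product is $\gtrsim_\epsilon M_n^{\epsilon/2}$, so that $\bP(\tau=n+1\mid\cF_n)\mathbf{1}_{\{\tau>n\}}\lesssim_\epsilon\exp(-c_\epsilon C M_n^{\epsilon/2})$ for all $n\geq n_1(\epsilon)$ and all $0\leq i\leq n$; the finitely many levels $n<n_1(\epsilon)$ are harmless, as enlarging $C$ makes the trivial bound $\mu_n(I)\leq\beta_n M_i^{-1}$ force $\tau>n_1(\epsilon)$. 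Since $\{\tau>n\}\in\cF_n$ we get $\bP(\tau=n+1)\leq 2\exp(-c_\epsilon C M_n^{\epsilon/2})$, and summing over $i$, over the $M_i$ intervals of $\cI_i$, and over $n\geq i$ (using $M_n\geq 2^{\,n-i}M_i\geq 2^{\,i}$ for the tails) gives
\begin{equation*}
    \bP\big(\exists\,0\leq i\leq j,\ I\in\cI_i:\ \mu_j(I)>\lambda_{i,j}\big)\ \lesssim_\epsilon\ \sum_{i\geq 1}M_i\exp\!\big(-c_\epsilon C M_i^{\epsilon/2}\big),
\end{equation*}
which is finite for $C$ large (as $M_i=e^{\log M_i}$ is dominated by $e^{c_\epsilon C M_i^{\epsilon/2}/2}$ and $M_i\geq 2^{i}$) and tends to $0$ as $C\to\infty$. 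Thus $\sup_{i\leq j,\,I\in\cI_i}\mu_j(I)/(M_j^{\epsilon}\beta_i M_i^{-1})<\infty$ almost surely, and the reduction in the first paragraph completes the proof.

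I expect the main obstacle to be this arithmetic balancing: one has to play the slack $M_n^{\epsilon}$ in the target off against the subpolynomial ratio $\beta_{n+1}/\beta_n$ and the factor $M_i^{-\alpha_0}$ hidden in $\beta_i M_i^{-1}$ so that Hoeffding's exponent becomes a genuine positive power of $M_n$ --- strong enough to absorb the union bound over the $M_i$ intervals at scale $i$ and the sum over all scales, yet still vanishing as $C\to\infty$.
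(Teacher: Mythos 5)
Your proof is correct and takes a genuinely different route from the paper's. Both proofs use Hoeffding's inequality conditionally on $E_n$ and exploit the relation between the number of intervals of $\cI_n$ inside $I$ and $\mu_n(I)$. The paper's proof applies Hoeffding with a self-referential threshold $t\asymp M_j^{\epsilon/4}M_j^{-1/2}\beta_j^{1/2}\mu_j(I)^{1/2}$, invokes Borel--Cantelli to obtain the recursive inequality \eqref{sec5itereq} almost surely for $j\geq i\geq i_0(\omega)$, and then runs a careful induction on $j$ to unwind this recursion (accumulating a factor $j^2 M_j^{\epsilon(1-2^{-j})}$), finishing the small-$i$ regime separately via martingale convergence of $\norm{\mu_j}$. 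You instead fix the target $\lambda_{i,n}=CM_n^\epsilon\beta_iM_i^{-1}$ up front, define a stopping time, and control the probability that it ever fires; a single union bound over scales replaces the induction and the martingale-convergence step. This avoids the delicate bootstrapping and keeps the exponent bookkeeping local to each step, at the cost of carrying a large random constant $C(\omega)$ (which the paper's proof also implicitly does, through $i_0(\omega)$ and the random bound on $\norm{\mu_j}$). Two cosmetic slips: in the Hoeffding application you say ``sum of their variances'' when you mean the sum of the squared ranges $\sum(b_k-a_k)^2$ appearing in \eqref{eqhoeff} (the numerics you write are for the latter and are correct); and there is no need to split into real and imaginary parts, since $\mu_{n+1}(I)-\mu_n(I)$ is real-valued. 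Neither affects the validity of the argument.
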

\begin{proof}
    Note that $I_i(x_0) \subseteq I_i(k-1) \cup I_i(k) \cup I_i(k+1)$ for some $0 \leq k \leq M_i-1$. Thus, it suffices to show that for any $j \geq i$,
    \begin{equation}\label{sec5lemmuI_ieq}
     \mu_j(I_i) \lesssim_\epsilon M_j^{\epsilon} \beta_i M_i^{-1} \qquad \text{a.s.}
    \end{equation}
    where the implicit constant is uniform in $i, j$ and $I_i \in \mathcal{I}_i$.
    
    For fixed $I_i \in \mathcal{I}_i$, let $X_{I_{j}} = \mu_{j+1}(I_j \cap I_i)$. Then, $\sum_{I_j \in \mathcal{I}_j}X_{I_j} = \mu_{j+1}(I_i)$ and
    \begin{equation*}
        \mathbb{E}(\sum_{I_j \in \mathcal{I}_j} X_{I_j} |E_j) = \sum_{I_j \in \mathcal{I}_j} \mathbb{E} (\mu_{j+1}(I_j \cap I_i) |E_j) = \sum_{I_j \in \mathcal{I}_j} \mu_j(I_j \cap I_i)= \mu_j(I_i).
    \end{equation*}
    Also, $X_{I_j}$ are independent conditioned on $E_j$, since $I_j \cap E_{j+1}$ are chosen independently.
    
    We have $|X_{I_j}| \leq \beta_{j+1}M_j^{-1}$ and the number of $I_j$ such that $I_j \subseteq I_i $ is $\beta_{j}^{-1}M_j \mu_j(I_i)$. Note that $I_j \subseteq I_i$ if and only if $X_{I_j} >0$. Therefore, we have
    \begin{equation*}
        \sum_{I_j \in \mathcal{I}_j} |X_{I_j}|^2 \leq \frac{\beta_{j+1}^2}{\beta_j M_j} \mu_j (I_i).
    \end{equation*}
    For $t= M_j^{\epsilon/4} M_j^{-1/2}\beta_j^{1/2} \mu_j(I_i)^{1/2}$, Hoeffding's inequality \eqref{eqhoeff} implies that there exist a constant $C>0$ such that 
    \begin{equation*}
        \mathbb{P}(|\mu_{j+1}(I_i)  -\mu_j(I_i)| \geq t  |E_j) \lesssim \exp (-C \beta_j^2\beta_{j+1}^{-2} M_j^{\epsilon/2}).
    \end{equation*}
    Therefore, we have
    \begin{equation}\label{sec5mu3I_i}
        \mathbb{P}(|\mu_{j+1}(I_i)  -\mu_j(I_i)| \geq t  \ \mathrm{for \ some \ } I_i \in \mathcal{I}_i ) \lesssim M_i \exp (-C \beta_j^2\beta_{j+1}^{-2} M_j^{\epsilon/2}).
    \end{equation}
    By \eqref{FDcond2_1},
    \begin{equation*}
        \sum_{i=1}^\infty \sum_{j=i}^\infty M_i \exp(-C \beta_j^2\beta_{j+1}^{-2} M_j^{\epsilon/2}) \lesssim_\epsilon \sum_{i=1}^\infty M_i \exp (-CM_i^{\epsilon/4}) < \infty.
    \end{equation*}
    Borel-Cantelli lemma and \eqref{sec5mu3I_i} imply that there exist $i_0$ a.s. such that if $j\geq i \geq i_0$, then for any $I_i \in \cI_i$,
    \begin{equation}\label{sec5itereq}
        \mu_{j+1}(I_i) \leq \mu_j(I_i) + M_j^{\epsilon/4} M_j^{-1/2}\beta_j^{1/2} \mu_j(I_i)^{1/2}.
    \end{equation}
    Using \eqref{FDcond2_1} again, we can choose $i_1(\epsilon) $ such that if $i \geq i_1(\epsilon)$, then 
    \begin{equation*}
        M_i^{1-\alpha_0-\epsilon/4} \leq \beta_i \leq M_i^{1-\alpha_0+\epsilon/4}. 
    \end{equation*}
    
    Now, we claim that if $ i \geq \max(i_0, i_1(\epsilon))$, then there exists a constant $C' \geq 1$ such that for any $j\geq i$ and $I_i \in \cI_i$
    \begin{equation}\label{sec5claim}
        \mu_{j}(I_i) \leq C' j^2 \beta_i M_i^{-1} M_j^{\epsilon(1-2^{-j})}.
    \end{equation}
    If $j =i$, then $\mu_j(I_i) \lesssim \beta_iM_i^{-1}$ uniformly in $i$ and $I_i \in \cI_i$. If \eqref{sec5claim} holds for $j=k$, \eqref{sec5itereq} implies that 
    \begin{equation*}
    \begin{split}
        \mu_{k+1}(I_i) \leq C' k^2 \beta_iM_i^{-1}M_k^{\epsilon(1-2^{-k})} +(C')^{1/2} k\beta_iM_i^{-1} M_k^{\frac{\epsilon}{2}(1-2^{-k})} M_k^{\epsilon/4} \left( \frac{\beta_kM_k^{-1}}{\beta_iM_i^{-1} } \right)^{1/2}.
    \end{split}
    \end{equation*}
    By the choice of $\epsilon_1(\epsilon)$,
    \begin{equation*}
        \frac{ \beta_kM_k^{-1}}{\beta_iM_i^{-1}} \leq \frac{M_k^{-\alpha_0 + \epsilon/4}}{M_i^{-\alpha_0 -\epsilon /4}} \leq (M_kM_i)^{\epsilon/4} \leq M_k^{\epsilon/2}. 
    \end{equation*}
    Thus, we obtain that for any $I_i \in \cI_i$,
    \begin{equation*}
        \mu_{k+1}(I_i) \leq C' (k^2+k) \beta_i M_i^{-1} M_k^{\epsilon(1-2^{-k-1})} \leq C' (k+1)^2 \beta_i M_i^{-1} M_{k+1}^{\epsilon(1-2^{-k-1})}.
    \end{equation*}
    Hence, we proved \eqref{sec5claim}. The estimate \eqref{sec5lemmuI_ieq} follows when $j \geq i \geq \max(i_0,i_1(\epsilon))$. 
    
    To consider the case $i \leq \max(i_0,i_1(\epsilon))$, we first show that $\norm{\mu_j}$ is a.s. bounded. We have $\mathbb{E}(\norm{\mu_{j+1}} |E_j) = \norm{\mu_j}$ for all $E_j$ and the law of total probability implies that $\mathbb{E}(\norm{\mu_{j+1}}) = \mathbb{E}(\norm{\mu_j})$. By iterating the equality, we get $\mathbb{E}(\norm{\mu_j}) = \mathbb{E}(\norm{\mu_0}) =1 $ for all $j \in \bN \cup \{0\}$. By Martingale convergence theorem (see for example \cite[Theorem 4.2.11]{Dur19}), as $j \rightarrow \infty$, $\norm{\mu_j}$ converges to $\norm{\mu}$ a.s. with $\mathbb{E}{(\norm{\mu})} =1$. Therefore, $\norm{\mu_j}$ is a.s. bounded.
    
    Therefore, if $i \leq \max(i_0,i_1(\epsilon))$, for any $j \geq i$ and $I_i \in \cI_i$ we have
    \begin{equation*}
        \mu_j(I_i) \leq \norm{\mu_j} \lesssim_\epsilon \min_{i \leq \max(i_0,i_1(\epsilon))} (\beta_iM_i^{-1}) \leq M_j^\epsilon \beta_i M_i^{-1}  \qquad \mathrm{a.s.}
    \end{equation*}
    Thus, we get \eqref{sec5lemmuI_ieq}.
\end{proof}
\begin{proof}[Proof of Proposition \ref{PropFDecay}]
    As in \cite{ShSu17} and \cite{ShSu18}, we use Lemma 9A4 in \cite{Wo03}. Then, it suffices to consider only $\xi \in \bZ^2$. Let $\Omega_j$ be the event that there exists $\xi \in \bZ^2$ such that one of the following happens.\\
    (1) If $|\xi| \leq M_{j+1}$, then
    \begin{equation*}
        |\widehat{\nu}_{j+1}(\xi) - \widehat{\nu}_j(\xi)| \geq M_j^{-\sigma/2} \norm{\mu_j}^{1/2}. 
    \end{equation*}
    (2) If $M_{j+1} \leq |\xi| \leq m_{j+1}M_{j}^2$ and $|\xi_1 | \geq 10 |\xi_2|$, then
    \begin{equation*}
        |\widehat{\nu}_{j+1}(\xi) - \widehat{\nu}_j(\xi)| \geq M_j^{-(\alpha_0-\epsilon)/2} \left(\frac{M_j}{|\xi|} \right)^{1/2} \norm{\mu_j}^{1/2}. 
    \end{equation*}
    (3) If $M_{j+1} \leq |\xi| \leq m_{j+1}M_{j}^2$ and $|\xi_1 | \leq 10 |\xi_2|$, then
    \begin{equation*}
        |\widehat{\nu}_{j+1} (\xi) -\widehat{\nu}_j(\xi) | \geq  M_j^{-(\alpha_0-\epsilon)/2} \bigg[ \sum_{i=0}^{k} \left( \frac{M_{i+1}}{M_k} \right)^2 \mu_j(I_{i}(x_0)) \bigg]^{1/2} 
    \end{equation*}
    for $k$ such that $0 \leq k \leq j-1$ and $M_{j+1}M_k \leq |\xi| \leq M_{j+1}M_{k+1}$.
    
    Since we only consider $\xi \in \bZ^2$ such that $|\xi| \leq m_{j+1}M_j^2$, Corollary \ref{FDlemSmall2} and \ref{FDlemmid2} imply that
    \begin{equation}\label{POmega}
        \mathbb{P}(\Omega_j) \lesssim m_{j+1}^2 M_j^4 \exp(-C M_j^{1-\alpha_0+\epsilon}\beta_j \beta_{j+1}^{-2}m_{j+1}^{-1})
    \end{equation}
    for some constant $C>0$. For sufficiently large $j$ and sufficiently small $\epsilon >0$, it follows from \eqref{FDcond1} and \eqref{FDcond2_1} that
    \begin{equation*}
        M_{j}^{1-\alpha_0+\epsilon} \beta_j \beta_{j+1}^{-2} m_{j+1}^{-1} \gtrsim_\epsilon M_j^{\epsilon/2}
    \end{equation*}
    Therefore, $\sum_{j=1}^\infty \bP(\Omega_j)  < \infty$.
    
    Using Lemma \ref{sec5lemmuI_i}, \eqref{FDcond2_1} and that $M_{j+1}M_k \leq |\xi| \leq M_{j+1}M_k$, a.s. we have
    \begin{equation}\label{sec5sumi0k}
    \begin{split}
        \sum_{i=0}^{k} \left( \frac{M_{i+1}}{M_k} \right)^2 \mu_j(I_{i}(x_0)) 
        &\lesssim_\epsilon \sum_{i=0}^k M_j^\epsilon \beta_i M_{i}m_{i+1}^2 M_k^{-2}\\
        &\lesssim_\epsilon M_j^\epsilon \beta_k m_{k+1}^2 M_k^{-1} \lesssim_\epsilon M_j^\epsilon M_k^\epsilon \left(\frac{M_{j+1}}{|\xi|} \right)^{\alpha_0-\epsilon}\\
        &\lesssim M_j^{2\epsilon}\left(\frac{M_{j}}{|\xi|} \right)^{\alpha_0-\epsilon} .
    \end{split}
    \end{equation}
    We apply Borel-Cantelli lemma to $\Omega_j$, Since $\norm{\mu_j}$ is a.s. bounded, Lemma \ref{FDlemLarge} and \eqref{sec5sumi0k} imply that
    a.s. there exists $j_0$ such that for $j \geq j_0$,
    \begin{align*}
        |\widehat{\nu}_{j+1}(\xi) -\widehat{\nu}_j(\xi) | &\lesssim  M_{j}^{-\sigma/2} & &\mathrm{if} \ |\xi| \leq M_{j+1},\\
        &\lesssim_\epsilon  M_j^{2\epsilon} |\xi|^{(-\alpha_0+\epsilon)/2}  & &\mathrm{if} \ M_{j+1}\leq  |\xi| \leq m_{j+1}M_{j}^2,\\
        &\lesssim_\epsilon  M_{j+1}^{1-\alpha_0+\epsilon}|\xi|^{-1/2} & &\mathrm{if} \ |\xi|\geq m_{j+1}M_{j}^2.
    \end{align*}
    When $M_{j+1}\leq  |\xi| \leq m_{j+1}M_{j}^2$ and $|\xi_1| \geq 10 |\xi_2|$, we used that $M_j/|\xi| \leq (M_j/|\xi|)^{\alpha_0-\epsilon}$ for $\epsilon < \alpha_0$.
    
    Let $j_1, j_2 $ be numbers such that $M_{j_2} \leq |\xi| \leq M_{j_2+1}$ and $m_{j_1+1}M_{j_1}^2 \leq |\xi| \leq m_{j_1+2}M_{j_1+1}^2$. By telescoping, we obtain for $k \geq j_0$,
    \begin{equation*}
    \begin{split}
        |\widehat{\nu}_k(\xi) - \widehat{\nu}_{j_0}(\xi) | &\lesssim_\epsilon \sum_{j_0 \leq j \leq j_1} M_{j+1}^{1-\alpha_0+\epsilon} |\xi|^{-1/2} +\sum_{\max(j_0,j_1) < j < j_2}  M_{j}^{2\epsilon} |\xi|^{(-\alpha_0+\epsilon)/2} \\
        &\qquad + \sum_{\max(j_0,j_2) \leq j \leq k} M_{j}^{-\sigma/2}.
    \end{split}
    \end{equation*}
    We drop the first sum if $j_1 < j_0$ and we drop the first and the second sum if $j_1,j_2 \leq j_0$.\\
    Since $\sigma< \alpha_0$, for sufficiently small $\epsilon >0$, we get
    \begin{equation*}
        \begin{split}
            |\widehat{\nu}_k(\xi) - \widehat{\nu}_{j_0}(\xi) | &\lesssim_\epsilon M_{j_1+1}^{ 1-\alpha_0+\epsilon}|\xi|^{-1/2} + M_{j_2-1}^{2\epsilon} |\xi|^{(-\alpha_0+\epsilon)/2} + M_{j_2}^{-\sigma/2}\\
            &\lesssim_\epsilon |\xi|^{-\sigma/2+\epsilon}.
        \end{split}
    \end{equation*}
    Since $|\widehat{\nu}_{j_0}(\xi)| \lesssim_{j_0} |\xi|^{-1/2} $, by letting $k \rightarrow \infty$, we obtain \eqref{Fdecay}.
\end{proof}

\section{Global restriction estimate}\label{secGR}
We need to randomize the choice of $S_{j,a}$ in order to get the global restriction estimate.
\begin{prop}\label{propGR}
    Let $\{n_j\}_{j \in \bN}$ be a sequence of positive numbers which satisfies \eqref{nvcond1}, \eqref{nvcond2}, \eqref{nvcond3} and \eqref{nvcond4}. Let $\{\mu_j\}_{j \in \bN \cup \{ 0\} }$ be a sequence of random measures on $[0,1]$ which satisfies the following.\\
    (1) $\mu_0, \mu_1, \cdots$ are constructed through the process in Section \ref{secCan}.\\
    (2) For each $j \in \bN$ and for each $a \in A_j$, the set $S_{j,a}$ are chosen randomly and independently from $\Sigma_j$ with probability distribution such that
    \begin{equation*}
        \mathbb{E}(\mu_j(x) | E_{j-1}) = \mu_{j-1}(x).
    \end{equation*}
For the corresponding sequence of random measures $\{\nu_j\}_{j \in \bN \cap \{ 0\} }$, its limiting measure $\nu$ satisfies all conclusions of Theorem \ref{M1} a.s. 
\end{prop}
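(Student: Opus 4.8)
The plan is to verify the four conclusions (1)--(4) of Theorem \ref{M1} for the limiting measure $\nu$ in turn. The guiding point is that (1), (2) and the local restriction estimate underlying (4) are deterministic — they hold for every admissible choice of the sets $S_{j,a}\in\Sigma_j$ — while the randomization is needed only for the Fourier decay (3), which is in turn what lets one pass from the local to the global estimate in (4). Conclusions (1) and (2) are then immediate: for any choice $S_{j,a}\in\Sigma_j$ the pair $(\nu,P_\infty)$ is exactly the object constructed in Section \ref{secCan}, so Lemma \ref{CanDimPara} applies verbatim, giving that $P_\infty$ has Hausdorff dimension $\alpha$ and $\nu(B(x,r))\lesssim_{\alpha,\epsilon}r^{\alpha-\epsilon}$; hence (1) and (2) hold surely, a fortiori almost surely.

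For (3), I would recognize the random sequence $\{\mu_j\}$ of Proposition \ref{propGR} as a special instance of the framework of Section \ref{secFD}, taking $M_j:=N_j^{1/2}$ and $m_j:=n_j^{1/2}$, which are integers since the $n_j$ are perfect squares, $\cI_j$ the partition of $[0,1]$ into the $M_j$ intervals of length $M_j^{-1}$, and $\beta_j:=|E_j|^{-1}=N_j^{1/2}|A_j|^{-1}$, which is deterministic because $|A_j|=\prod_{i\le j}t_i$ depends only on $\{n_i\}$. The four bulleted conditions of Section \ref{secFD} then hold: $\mu_0=\mathbf{1}_{[0,1]}$; $\mu_j=\beta_j\mathbf{1}_{E_j}$ with $E_j$ a union of cells of $\cI_j$; the martingale identity $\mathbb{E}(\mu_{j+1}\mid E_j)=\mu_j$ is precisely hypothesis (2) of the proposition; and the conditional independence of the sets $I_j\cap E_{j+1}$ given $E_j$ follows from that of $\{S_{j+1,a}\}_{a\in A_j}$. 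Condition \eqref{FDcond1} is \eqref{nvcond3} rewritten, since $m_{j+1}=n_{j+1}^{1/2}\lesssim_\epsilon N_j^{\epsilon/2}=M_j^{\epsilon}$; for \eqref{FDcond2}, the bounds $n_i^{\alpha/2}\le t_i<n_i^{\alpha/2}+1$ give $1\le|A_j|N_j^{-\alpha/2}\le\prod_{i\le j}(1+n_i^{-\alpha/2})\le e^{j}$, while \eqref{nvcond2} forces $\log N_j/j\to\infty$, so $e^{j}=N_j^{o(1)}$ and therefore $\log\beta_j/\log M_j\to1-\alpha$, that is, $\alpha_0=\alpha$. Proposition \ref{PropFDecay} now gives, almost surely, $|\widehat{\nu}(\xi)|\lesssim_\sigma(1+|\xi|)^{-\sigma/2}$ for every $\sigma<\alpha$; taking $\sigma=\alpha-2\epsilon$ (the case $\epsilon\ge\alpha/2$ being trivial since $|\widehat{\nu}|\le1$) yields (3).

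For (4), Proposition \ref{PropLRresult} — whose proof rests only on the decoupling and discrete restriction bounds of Section \ref{secDec}, which are valid for every admissible $\{S_{j,a}\}$ — provides, deterministically, the local estimate $\norm{\widehat{fd\nu}}_{L^{6/\alpha}(B^2(R))}\lesssim_\epsilon R^{\epsilon}\norm{f}_{L^2(\nu)}$ for all $R$ and all $\epsilon>0$. Combining this with the ball bound (2) and any fixed power decay $|\widehat{\nu}(\xi)|\lesssim(1+|\xi|)^{-\varpi}$, $\varpi>0$, supplied by (3), I would invoke Tao's $\epsilon$-removal lemma \cite{Tao99} as in \cite{LW18}: for $p>6/\alpha$, choose $p_0\in(6/\alpha,p)$, apply the local estimate at exponent $p_0$ with $\epsilon$ sufficiently small, and the lemma upgrades it to the global bound $\norm{\widehat{fd\nu}}_{L^p(\bR^2)}\lesssim_p\norm{f}_{L^2(\nu)}$. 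Since (3) holds almost surely, so does this, and the equivalent dual statement follows by duality. Hence all of (1)--(4) hold almost surely.

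I expect the main obstacle to be this last step. The $\epsilon$-removal argument of \cite{LW18} is organized around dual cubes of fractal subsets of $\bR^d$, and these do not transfer to the parabola: the dual parallelograms associated with the pieces $\Omega_I$ point in all different directions and overlap on a cube of side $O(R^{1/2})$ even though their union is a cube of side $O(R)$, as explained in the Outline, so one must replace \cite[Lemma 9]{LW18} by the simplified variant developed in Section \ref{secGR}. A secondary, essentially bookkeeping, difficulty is keeping the two parametrizations aligned — the $N_j,n_j$ of Section \ref{secCan} against the $M_j,m_j$ of Section \ref{secFD} — and confirming that $\alpha_0=\alpha$.
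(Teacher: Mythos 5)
Your proposal is correct and follows essentially the same route as the paper: conclusions (1)--(2) come deterministically from Lemma \ref{CanDimPara}, (3) comes from recognizing the construction as a special case of the Section \ref{secFD} framework with $m_j=n_j^{1/2}$, $M_j=N_j^{1/2}$, $\beta_j=|E_j|^{-1}$, $\alpha_0=\alpha$ and applying Proposition \ref{PropFDecay}, and (4) follows from the deterministic local estimate of Proposition \ref{PropLRresult} together with Tao's $\epsilon$-removal argument with Lemma \ref{lemGLTao} in place of \cite[Lemma 3.2]{Tao99}. Your verification of \eqref{FDcond1}--\eqref{FDcond2} (in particular that $\log\beta_j/\log M_j\to 1-\alpha$ using \eqref{nvcond2}) and your observation that $n_j$ must be perfect squares for $m_j=n_j^{1/2}$ to be an integer are helpful details that the paper's two-line proof leaves implicit.
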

To prove Proposition \ref{propGR}, we need the next lemma to combine it with Tao's epsilon removal argument.
\begin{lem}\label{lemGLTao}
    Let $p =6/\alpha $ where $0 < \alpha <1$ and assume that $\nu$ is the measure constructed in Section \ref{secCan} and satisfies \eqref{M1eq2}. For sufficiently large $R>0$, suppose that $\{Q_i\}_{i=1}^M$ is a sparse collection of $R$-cubes in $\bR^2$, which means that their centers $x_1, \cdots, x_M$ are $R^B M^B$-separated from each other for some sufficiently large constant $B$ which will be determined later. For any function $f$ supported on $\cup_{i=1}^M Q_i$, we have
    \begin{equation}\label{eqGLtao}
        \norm{\widehat{f}}_{L^2(d\nu)} \lesssim_{q, \epsilon} R^{\epsilon} \norm{f}_{L^{q}(\bR^2)}
    \end{equation}
    where $1/p + 1/q = 1$.
\end{lem}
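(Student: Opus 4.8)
The plan is to split $f$ over the cubes $Q_i$, expand $\norm{\widehat f}_{L^2(d\nu)}^2$ into a diagonal and an off-diagonal part, bound the diagonal part by applying the local restriction estimate of Proposition \ref{PropLRresult} on each individual cube, and control the off-diagonal part using the Fourier decay \eqref{M1eq2} of $\nu$ together with the sparseness of $\{Q_i\}_{i=1}^M$. This is the standard scheme that feeds into Tao's $\epsilon$-removal lemma, and the role of the separation $R^BM^B$ is exactly to make the off-diagonal kernel $\widehat\nu(x-y)$ negligibly small.

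First I would write $f=\sum_{i=1}^M f_i$ with $f_i=f\mathbf{1}_{Q_i}$. Since the $Q_i$ are pairwise disjoint, $\norm{f}_{L^q(\bR^2)}^q=\sum_i\norm{f_i}_{L^q(\bR^2)}^q$ and $|\mathrm{supp}\,f|\le MR^2$. Expanding the square and applying Fubini's theorem,
\begin{equation*}
    \norm{\widehat f}_{L^2(d\nu)}^2=\sum_{i,i'=1}^M\int\widehat{f_i}\,\overline{\widehat{f_{i'}}}\,d\nu=\sum_{i,i'=1}^M\int\!\!\int f_i(x)\overline{f_{i'}(y)}\,\widehat\nu(x-y)\,dx\,dy.
\end{equation*}
For the diagonal terms I would use that each $Q_i$ is a translate of $B^2(R)$ and that $|\widehat g|$ is unchanged when $g$ is translated; hence applying \eqref{coreqLR} to the translate of $f_i$ supported on $B^2(R)$ gives $\norm{\widehat{f_i}}_{L^2(d\nu)}\lesssim_{q,\epsilon}R^\epsilon\norm{f_i}_{L^q(\bR^2)}$. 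Since $p=6/\alpha>6$ we have $q<2$, so $\ell^q\hookrightarrow\ell^2$, and by disjointness of supports
\begin{equation*}
    \sum_{i=1}^M\int|\widehat{f_i}|^2\,d\nu\lesssim_{q,\epsilon}R^{2\epsilon}\sum_{i=1}^M\norm{f_i}_{L^q(\bR^2)}^2\le R^{2\epsilon}\Big(\sum_{i=1}^M\norm{f_i}_{L^q(\bR^2)}^q\Big)^{2/q}=R^{2\epsilon}\norm{f}_{L^q(\bR^2)}^2.
\end{equation*}

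For the off-diagonal terms $i\ne i'$, every $x\in Q_i$ and $y\in Q_{i'}$ satisfy $|x-y|\gtrsim R^BM^B$ once $R$ is large, because the separation $R^BM^B$ dominates the common side length $R$. Taking $\epsilon=\alpha/4$ in \eqref{M1eq2} gives $|\widehat\nu(x-y)|\lesssim(R^BM^B)^{-\alpha/4}$, so the off-diagonal sum is at most
\begin{equation*}
    (R^BM^B)^{-\alpha/4}\Big(\sum_{i=1}^M\norm{f_i}_{L^1(\bR^2)}\Big)^2=(R^BM^B)^{-\alpha/4}\norm{f}_{L^1(\bR^2)}^2\le(R^BM^B)^{-\alpha/4}(MR^2)^{2/p}\norm{f}_{L^q(\bR^2)}^2,
\end{equation*}
using H\"older's inequality on $\mathrm{supp}\,f$ and $1-1/q=1/p$. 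The right side equals $R^{\,4/p-B\alpha/4}M^{\,2/p-B\alpha/4}\norm{f}_{L^q(\bR^2)}^2$, which is $\lesssim\norm{f}_{L^q(\bR^2)}^2$ as soon as $B$ is chosen so that $B\alpha/4\ge 4/p$. Combining the two bounds and taking square roots yields \eqref{eqGLtao}.

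I expect the only delicate point to be the exponent bookkeeping in the last step: one must check that the decay gain $(R^BM^B)^{-\alpha/4}$ genuinely beats the losses $R^{4/p}$ and $M^{2/p}$ incurred in passing from $L^q$ to $L^1$ on $\mathrm{supp}\,f$, and this is precisely what pins down the admissible range of $B$. Handling the off-diagonal contribution directly through the scalar kernel $\widehat\nu(x-y)$ in this way, rather than through dual parallelograms, is also what lets us avoid the overlap obstruction described in the outline and thereby simplify Lemma~9 of \cite{LW18}.
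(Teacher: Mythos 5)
Your proposal is correct and follows essentially the same strategy as the paper: decompose $f$ over the sparse cubes, bound the diagonal sum by the local restriction estimate (Proposition~\ref{PropLRresult}), and control the off-diagonal sum via the Fourier decay of $\nu$ together with the $R^BM^B$-separation, absorbing the polynomial losses by taking $B$ large. The one small variant is in the off-diagonal step: the paper applies Young's/H\"older's inequality to pair the $L^q$ norms of the $f_i$ directly with the $L^{p/2}$ norm of $\widehat{d\nu}$ over a $2R$-ball around $x_j-x_i$, whereas you bound $\widehat{\nu}$ by its sup, sum $L^1$ norms, and then pass back to $L^q$ via H\"older on $\supp f$ using $|\supp f|\le MR^2$ — an equivalent bookkeeping that leads to the same conclusion.
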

\begin{proof}
    Let $f = \sum_{i=1}^M f \mathbf{1}_{Q_i}$ where $\mathbf{1}_{Q_i}$ is a characteristic function on $Q_i$.
    \begin{equation*}
    \begin{split}
        \norm{\widehat{f}}_{L^2(d\nu)}^2 &= \int |\sum_{i=1}^M \widehat{f\mathbf{1}_{Q_i}}|^2 d\nu\\
        &=\int \sum_{i=1}^M | \widehat{f\mathbf{1}_{Q_i}}|^2 d\nu + \int \sum_{i \neq j}  \widehat{f\mathbf{1}_{Q_i}} \overline{ \widehat{f\mathbf{1}_{Q_j}}} d\nu.
    \end{split}
    \end{equation*}
    It follows from Proposition \ref{PropLRresult} that
    \begin{equation}\label{ijeq}
        \begin{split}
            \sum_{i=1}^M \int |\widehat{f\mathbf{1}_{Q_i}}|^2 d\nu &\lesssim_{q,\epsilon} R^\epsilon \sum_{i=1}^M \norm{f\mathbf{1}_{Q_i}}_{L^{q}(\bR^2)}^2 \\
            &\leq R^\epsilon \left( \sum_{i=1}^M \norm{f\mathbf{1}_{Q_i}}_{L^{q}(\bR^2)}^{q} \right)^{2/q}= R^\epsilon \norm{f}_{L^{q}(\bR^2)}^2.
        \end{split}
    \end{equation}
    In the last inequality, we used that $1 \leq q\leq 2$.
    
    For the second term, Young's convolution inequality implies that
    \begin{equation}\label{eqLemij}
        \begin{split}
            \int \sum_{i \neq j} \widehat{f\mathbf{1}_{Q_i}} \overline{ \widehat{f\mathbf{1}_{Q_j}}} d\nu &=\sum_{i \neq j} \iint f\mathbf{1}_{Q_i}(x) \overline{f\mathbf{1}_{Q_j}(y)} \widehat{d\nu}(y-x)dxdy\\
            &\leq\sum_{i \neq j} \norm{f\mathbf{1}_{Q_i}}_{L^{q}(\bR^2)}\norm{f\mathbf{1}_{Q_i}}_{L^{q}(\bR^2)} \norm{\widehat{d\nu}\mathbf{1}_{i,j}}_{L^r(\bR^2)}
        \end{split}
    \end{equation}
    where $\frac{2}{q}+\frac{1}{r}=2$, so that $r= \frac{p}{2} \geq 1$  and 
    \begin{equation*}
        \mathbf{1}_{i,j} = \mathbf{1}_{B^2(2R)}(x-(x_j-x_i))
    \end{equation*}
    where $B^2(2R)$ is a ball of radius $2R$ centered at the origin. Since $x_i$ and $x_j$ are separated at least by $R^B M^B$, \eqref{M1eq2} implies that 
    \begin{equation}\label{dnu1_ij}
        \norm{\widehat{d\nu} \mathbf{1}_{i,j}}_{L^r(\bR^2)} \lesssim_{\epsilon} (R^B M^B)^{-\alpha/2 +\epsilon} R^{2/r}.
    \end{equation}
    We choose $B$ such that 
    \begin{equation*}
        B \geq \frac{8}{\alpha}
    \end{equation*}
    and let $\epsilon = \alpha/4$ in \eqref{dnu1_ij}.
    Since $r \geq 1$ we have, 
    \begin{equation}\label{eqest1_i,j}
        \norm{\widehat{d\nu} \mathbf{1}_{i,j}}_{L^r(\bR^2)} \lesssim (RM)^{-B\alpha/4} R^2 \leq M^{-2}.
    \end{equation}
    By \eqref{eqLemij} and \eqref{eqest1_i,j}, we obtain that
    \begin{equation}\label{ijneq}
    \begin{split}
        \int \sum_{i \neq j} \widehat{f\mathbf{1}_{Q_i}} \overline{\widehat{f\mathbf{1}_{Q_j}}  } d\nu &\lesssim M^{-2} \sum_{i \neq j} \norm{f\mathbf{1}_{Q_i}}_{L^{q}(\bR^2)}\norm{f\mathbf{1}_{Q_j}}_{L^{q}(\bR^2)}\\
        &\leq M^{-1}\left( \sum_{i=1}^M \norm{f\mathbf{1}_{Q_i}}_{L^{q}(\bR^2)}^2 \right)^{1/2} \left( \sum_{j=1}^M \norm{f\mathbf{1}_{Q_j}}_{L^{q}(\bR^2)}^2 \right)^{1/2}\\
        &\leq M^{-1}\norm{f}_{L^{q}(\bR^2)}^2.
    \end{split}
    \end{equation}
    In the last inequality, we used again that $1 \leq q \leq 2$. Combining \eqref{ijeq} and \eqref{ijneq}, we obtain \eqref{eqGLtao}.
\end{proof}
Let us turn to the proof of Proposition \ref{propGR}.
\begin{proof}[Proof of Proposition \ref{propGR}]
    The inequality \eqref{M1eq1} follows from Lemma \ref{CanDimPara}. If we let $n_k^{1/2}  = m_k$, $N_k^{1/2} = M_k$, $\beta_k = |E_k|^{-1} $ and $\alpha_0 = \alpha$, then $\{ \mu_k\}_{k \in \bN \cup \{0\}}$ satisfies all conditions of Proposition \ref{PropFDecay}. Thus, $\widehat{\nu}$ satisfies \eqref{M1eq2} a.s. Now, we can use Tao's epsilon removal argument to prove \eqref{M1eq3}. Lemma \ref{lemGLTao} replaces Lemma 3.2 in \cite{Tao99} and rest of the proof \eqref{M1eq3} is same as in \cite[Theorem 1.2]{Tao99}.
\end{proof}
\section{Proof of Theorem \ref{M2}}\label{secSharp}
The proof is based on Knapp-type example.
\begin{proof}
    Since $\nu$ is supported on $\bP^{d-1}$, there exists a measure $\mu$ on $[0,1]^{d-1}$ such that 
    \begin{equation*}
        \int f(x) d\nu = \int f(x', |x'|^2 )d\mu
    \end{equation*}
   for all $\nu$-measurable function $f$ on $\bR^d$ where $x' \in \bR^{d-1} $. The assumption \eqref{M2eq1} implies that 
    \begin{equation}\label{Mueq1}
        \mu(B(x',r)) \lesssim_{\alpha} r^{\alpha}
    \end{equation}
    for all $x' \in \bR^{d-1}$ and $\forall r >0$. 
    
    We claim that for any $\epsilon >0$, there exist sequences $\{a_i\}_{i \in \bN}$ and $\{r_i\}_{i \in \bN }$ such that $\lim_{i \rightarrow \infty } r_i =0$ and 
    \begin{equation}\label{lbmu}
        r_i^{\alpha+\epsilon} < \mu(B(a_i,r_i)).
    \end{equation}
    The measure $\mu$ is supported on a set of Hausdorff dimension $\alpha $ in $\bR^{d-1}$. By Frostman's lemma (see for example \cite[Theorem 2.7]{Mat15}), $\alpha$ is the supremum of $s$ such that $\mu(B(x',r)) \leq r^s$ holds for all $x \in \bR^{d-1}$ and $r >0$. Towards a contradiction, for given $\epsilon>0$, we assume that $r_0>0$ is the smallest such that $\mu(B(x',r_0)) > r^{\alpha+\epsilon}$ for some $x' \in \bR^{d-1}$. Let $\mu' = r_0^{\alpha+\epsilon} \mu$. Since $\nu$ is a probability measure, $\mu([0,1])^{d-1}=1$. If $r\geq r_0$, then $\mu'(B(x,r)) \leq r_0^{\alpha+\epsilon} \mu([0,1]^{d-1}) \leq r^{\alpha+\epsilon} $. If $r <r_0$, then $\mu'(B(x,r)) \leq \mu(B(x,r)) \leq r^{\alpha+\epsilon} $. Since $\mu'$ is also supported on a set of Hausdorff dimension $\alpha$, it contradicts Frostman's lemma. Therefore, such $r_0$ does not exists and we established the claim \eqref{lbmu}.
    
    Assume that the estimate \eqref{M2eq2} holds for some $p$ and $q$ and let $ h_i(x') = \mathbf{1}_{[0,1]^{d-1}} (r_i^{-1} (x'-a_i))$.
    Then, we have
    \begin{equation}\label{M2pfeq1}
        \left( \int \bigg| \int h_i(x') e(-x'\cdot \xi' - |x'|^2\xi_d ) d\mu(x')\bigg|^p d\xi \right)^{1/p} \lesssim_{p,q}  \left( \int |h_i(x')|^q d\mu \right)^{1/q}    
    \end{equation}
    where $\xi' \in \bR^{d-1}$ and $\xi_d \in \bR$. By \eqref{Mueq1}, we obtain that
    \begin{equation}\label{hupbound}
        \left( \int |h_i(x')|^q d\mu \right)^{1/q} \lesssim_{\alpha} r_i^{\alpha/q}.
    \end{equation}
    By change of variables $\xi' = r_i^{-1}\zeta'$ and $\xi_d = r_i^{-2}\zeta_d$, left-hand side of \eqref{M2pfeq1} becomes
    \begin{equation}\label{hlbound}
    \begin{split}
        &r_i^{-(d+1)/p} \left( \int \bigg|  \int h_i(x') e(-r_i^{-1} (x'-a_i) \cdot(\zeta' + 2 \zeta_d r_i^{-1} a_i ) -r_i^{-2}\zeta_d |x'-a_i|^2)d\mu(x') \bigg|^p d\zeta \right)^{1/p}.
    \end{split}
    \end{equation}
    By considering when $|\zeta'+2\zeta_d r_i^{-1}a_i | \leq 1/100$ and $|\zeta_d| \leq 1/100$, \eqref{lbmu} and \eqref{hlbound} implies that 
    \begin{equation}\label{hlowbound}
    \begin{split}
        &\left( \int \bigg| \int h_i(x') e(-x'\cdot\xi' - |x'|^2\xi_d ) d\mu(x')\bigg|^p d\xi \right)^{1/p}\\
        &\gtrsim r_i^{-(d+1)/p} \bigg|  \int \mathbf{1}_{[0,1]^{d-1}} (r_i^{-1} (x'-a_i)) d\mu(x') \bigg| \gtrsim_{\alpha,\epsilon} r_i^{-(d+1)/p+\alpha +\epsilon }.
    \end{split}
    \end{equation}
    We combine \eqref{M2pfeq1}, \eqref{hupbound} and \eqref{hlowbound}, then for any $r_i$, we get
    \begin{equation*}
        r_i^{-(d+1)/p +\alpha +\epsilon} \lesssim_{\alpha,\epsilon,p,q} r_i^{\alpha/q}.
    \end{equation*}
    Therefore, 
    \begin{equation*}
        \frac{\alpha}{q} \leq -\frac{d+1}{p} + \alpha +\epsilon.
    \end{equation*}
    Since $\epsilon$ is arbitrary, $p \geq q'(d+1)/\alpha$ if the estimate \eqref{M2eq2} holds.
\end{proof}

\section{Appendix}\label{secAppen}

\begin{proof}[Proof of Lemma \ref{DIS_1S_2}]
    First, assume that $S_1$ is a subset of $[-k_0, k_0]^d\cap \bZ^d$. Let $\psi$ be a non-negative smooth function supported on $[0,1]^d$ such that $\psi \geq 1$ on $[1/4, 3/4]^d$. For $x \in [-k,k]^d$,
    \begin{equation*}
        1 \leq \sum_{|n| \lesssim k} |\psi(x-n)|^p + \sum_{|n'| \lesssim 1} \sum_{|n| \lesssim k} |\psi(x-n-n'/2)|^p
    \end{equation*}
    where $n, n' \in \bZ^d$, but $n' \neq 0$. Thus, we have
    \begin{equation*}
    \begin{split}
        &\norm{\sum_{a\in S_1}\sum_{b \in S_{2,a}} c_{a,b} e((ka+b)\cdot x) }_{L^p([0,1]^d)}^p = \int_{[0,k]^d} \bigg|\sum_{a \in S_1} \sum_{b \in S_{2,a}} c_{a,b} e\left( \left(a+ \frac{b}{k} \right)\cdot x\right) \bigg|^p k^{-d}dx\\
        &\leq \sum_{|n| \lesssim k} k^{-d} \int_{[0,1]^d} \bigg| \sum_{a \in S_1} \sum_{b \in S_{2,a}} c_{a,b} e(a\cdot x) e\left( \frac{b}{k}\cdot (x+n)\right) \psi(x) \bigg|^p dx\\
        &+\sum_{|n'| \lesssim 1} \sum_{|n| \lesssim k}k^{-d} \int_{[0,1]^d} \bigg| \sum_{a \in S_1} \sum_{b \in S_{2,a}} c_{a,b} e(a\cdot (x+n'/2)) e\left( \frac{b}{k}\cdot (x+n+n'/2)\right) \psi(x) \bigg|^p dx.
    \end{split}
    \end{equation*}
    We will show that the first term is bounded by a constant multiple of
    \begin{equation}\label{C_1C_2bound}
         C_1C_2 \left( \sum_{a \in S_1} \sum_{ b\in S_{2,a} } |c_{a,b}|^2 \right)^{p/2}.
    \end{equation}
    By the same argument, the second term will be also bounded by a constant multiple of \eqref{C_1C_2bound}.
    
    Since $\psi$ is supported on $[0,1]^d$, we have
    \begin{equation*}
        \psi(x) e(b\cdot x/k) = \sum_{m \in \bZ^d} \widehat{\psi}_b(m) e(m\cdot x)
    \end{equation*}
    where $\widehat{\psi}_b(m)$ is the $m$-th Fourier coefficient of $\psi(x) e(b\cdot x/k)$. Since $\sum_m |\widehat{\psi}_b(m)| \lesssim 1$ and $S_1$ is a subset of $[-k_0,k_0]^d$, we obtain
    \begin{equation}\label{appeneq1}
    \begin{split}
        & \left(\sum_{|n| \lesssim k} k^{-d} \int_{[0,1]^d} \bigg| \sum_{a \in S_1} \sum_{b \in S_{2,a}} c_{a,b} e(a\cdot x) e\left( \frac{b}{k}\cdot (x+n) \right) \psi(x) \bigg|^p dx \right)^{1/p}\\
        &\leq \sum_{m \in \bZ^d} \left( \sum_{|n| \lesssim k} k^{-d} \int_{[0,1]^d}  \bigg| \sum_{a \in S_1} \sum_{b \in S_{2,a}} c_{a,b} e \left( \frac{b\cdot n}{k} \right) \widehat{\psi}_b(m) e((a+m)\cdot x)  \bigg|^p dx \right)^{1/p}\\
        &\leq \sum_{m \in \bZ^d} \left( \sum_{|n| \lesssim k} k^{-d} \int_{[0,1]^d}  \bigg| \sum_{a \in S_1}  \left( \sum_{b \in S_{2,a}} c_{a,b} e \left( \frac{b\cdot n}{k} \right) \widehat{\psi}_b(m) \right) e(a\cdot x)  \bigg|^p dx \right)^{1/p}\\
        &\leq C_1 \sum_{m \in \bZ^d} \left( \sum_{|n| \lesssim k} k^{-d} \left( \sum_{a\in S_1} \bigg| \sum_{b\in S_{2,a}}c_{a,b} e\left(\frac{b\cdot n
        }{k} \right) \widehat{\psi}_b(m) \bigg|^2 \right)^{p/2} \right)^{1/p}.
    \end{split}
    \end{equation}
    In the last inequality, we used \eqref{DIS_1S_2eq1}.
    
    First, we consider when $|m| \leq 10$. Since $p \geq 2$ and $|\psi|\lesssim 1$, we get
    \begin{equation}\label{m_small}
        \begin{split}
            &\left( \sum_{|n| \lesssim k} k^{-d} \left( \sum_{a\in S_1} \bigg| \sum_{b\in S_{2,a}}c_{a,b} e\left(\frac{b\cdot n}{k} \right) \widehat{\psi}_b(m) \bigg|^2 \right)^{p/2} \right)^{1/p}\\
            &\leq \left( \sum_{a \in S_1} \left( \sum_{|n| \lesssim k} k^{-d} \bigg| \sum_{b \in S_{2,a}} c_{a,b} e\left(\frac{b\cdot n}{k} \right) \int_{[0,1]^d} \psi(x) e\left( \left( \frac{b}{k}-m \right)\cdot x \right)dx \bigg|^p \right)^{2/p} \right)^{1/2}\\
            &\lesssim \left( \sum_{a \in S_1} \left( k^{-d} \sum_{|n| \lesssim k} \int_{[0,1]^d} \bigg| \sum_{b \in S_{2,a}} c_{a,b} e\left( \frac{b}{k}\cdot (x+n) \right) \bigg|^p dx \right)^{2/p} \right)^{1/2}\\
            &\lesssim \left( \sum_{a \in S_1} \norm{\sum_{b \in S_{2,a}} c_{a,b} e(b\cdot x)}_{L^p([0,1]^d)}^2 \right)^{1/2} \leq  C_2 \left( \sum_{a \in S_1} \sum_{b \in S_{2,a}} |c_{a,b}|^2 \right)^{1/2}.
        \end{split}
    \end{equation}
    In the case when $|m| \geq 10$, since $\psi$ is supported on $[0,1]^d$ and decreases rapidly, we can use the principle of stationary phase. Let $(b/k-m)_i$ denote the $i$-th coordinate of $(b/k-m)$ and denote $\nabla_{b,k,m}$ by 
    \begin{equation*}
        \nabla_{b,k,m} = \sum_{i=1}^d \bigg| \frac{b}{k}-m \bigg|^{-2} \left( \frac{b}{k}-m \right)_i \frac{\partial}{\partial x_i}.
    \end{equation*}
    Note that 
    \begin{equation*}
        \widehat{\psi}_b(m)  = \int_{[0,1]^d} (\nabla_{b,k,m}^2 \psi(x)) e\left(\left(\frac{b}{k}-m\right) x \right) dx.
    \end{equation*}
    If we let
    \begin{equation*}
        \tilde{c}_{a,b,i,j,m}  = c_{a,b}\bigg|\frac{b}{k}-m \bigg|^{-4} \left(\frac{b}{k}-m \right)_i \left(\frac{b}{k}-m \right)_j,
    \end{equation*}
    it is well defined since $|b| \leq k$ and $|m| \geq 10$. Then we obtain 
    \begin{equation*}
        \begin{split}
            \bigg|\sum_{b \in S_{2,a}} c_{a,b} e\left(\frac{b \cdot n}{k} \right)\widehat{\psi}_b(m)\bigg| \leq \sum_{i,j=1}^d \int_{[0,1]^d} \bigg| \sum_{b \in S_{2,a}} \tilde{c}_{a,b,i,j,m} e\left( \frac{b}{k}\cdot (x+n) \right) \partial_{ij} \psi(x)\bigg|dx.
        \end{split}
    \end{equation*}
    Since $|\partial_{i,j} \psi| \lesssim 1$ uniformly in $i$ and $j$, we obtain that
    \begin{equation*}
        \begin{split}
            &\left( \sum_{|n| \lesssim k} k^{-d} \left( \sum_{a\in S_1} \bigg| \sum_{b\in S_{2,a}}c_{a,b} e\left(\frac{b\cdot n}{k} \right) \widehat{\psi}_b(m) \bigg|^2 \right)^{p/2} \right)^{1/p}\\
            &\lesssim \sum_{i,j=1}^d \left( \sum_{a \in S_1} \left( k^{-d} \sum_{|n| \lesssim k} \int_{[0,1]^d} \bigg| \sum_{b \in S_{2,a}} \tilde{c}_{a,b,i,j,m} e\left( \frac{b}{k}\cdot (x+n) \right) \bigg|^p dx \right)^{2/p} \right)^{1/2}\\
            &\lesssim \sum_{i,j=1}^d \left( \sum_{a \in S_1} \norm{\sum_{b \in S_{2,a}} \tilde{c}_{a,b,i,j,m} e(b\cdot x)}_{L^p([0,1]^d)}^2 \right)^{1/2}\\
            &\leq C_2 \sum_{i,j=1}^d \left( \sum_{a \in S_1} \sum_{b \in S_{2,a}} |\tilde{c}_{a,b,i,j,m}|^2 \right)^{1/2} \lesssim_d C_2|m|^{-2} \left( \sum_{a\in S_1} \sum_{b \in S_{2,a}} |c_{ab}|^2\right)^{1/2}.
        \end{split}
    \end{equation*}
    In the last inequality, we used that $|b/k-m | \approx |m|$ since $|m| \geq 10$. Therefore,
    \begin{equation}\label{appeneq2}
        \begin{split}
            &\sum_{|m| \geq 10} \left( \sum_{|n| \lesssim k} k^{-d} \left( \sum_{a\in S_1} \bigg| \sum_{b\in S_{2,a}}c_{a,b} e\left(\frac{b\cdot n}{k} \right) \widehat{\psi}_b(m) \bigg|^2 \right)^{p/2} \right)^{1/p}\\
            &\lesssim C_2 \sum_{|m| \geq 10} |m|^{-2} \left( \sum_{a \in S_1} \sum_{b \in S_{2,a}} |c_{a,b}|^2 \right)^{1/2} \\
            &\lesssim C_2 \left( \sum_{a \in S_1} \sum_{b \in S_{2,a}} |c_{a,b}|^2 \right)^{1/2}.
        \end{split}
    \end{equation}
    By \eqref{appeneq1}, \eqref{m_small} and \eqref{appeneq2}, it follows that
    \begin{equation*}
        \begin{split}
            &\left(\sum_{|n| \lesssim k} k^{-d} \int_{[0,1]^d} \bigg| \sum_{a \in S_1} \sum_{b \in S_{2,a}} c_{a,b} e(a\cdot x) e\left( \frac{b}{k}\cdot(x+n) \right) \psi(x) \bigg|^p dx \right)^{1/p}\\
            &\lesssim C_1 C_2 \left( \sum_{a \in S_1} \sum_{b \in S_{2,a}} |c_{a,b}|^2 \right)^{1/2}.
        \end{split}
    \end{equation*}
    
    We proved Lemma \ref{DIS_1S_2} when $S_1$ is a subset of $[-k_0,k_0]^d \cap \bZ^d$. The estimate holds for arbitrary $k_0$. Thus, letting $k_0 \rightarrow \infty$ finishes the proof.
\end{proof}
\begin{proof}[Proof of Lemma \ref{DIequiv}]
    The inequality $K_p(E_j) \lesssim_p D_p(E_j)$ is well known (see for example \cite[Theorem 13.1]{Dem20}). Thus, it suffices to prove the converse. Let $R$ be a positive large number and $S$ be a subset of $ [-R,R]^d \cap \bZ^d$ such that 
    \begin{equation}\label{AppenDR}
        \norm{\sum_{a \in S} c_a e(a \cdot x)}_{L^p([0,1]^d)} \leq C \left( \sum_{a\in S}|c_a|^2\right)^{1/2}
    \end{equation}
    and let  
    \begin{equation*}
        \widehat{f}_a(\xi) = \widehat{f}(\xi) \mathbf{1}_{[0,1]^d} (R\xi -a).
    \end{equation*}
    Let us consider a Schwartz function $\eta$ such that $|\eta| \geq 1$ on $[-1,1]^d$ and $\widehat{\eta}$ is supported on $[0,1]^d$. Recall that $B^d(R)$ is a cube of side length $R$ in $ \bR^d$ centered at the origin. We will show that for any $f$ such that $f=\sum_{a \in S} f_a$,
    \begin{equation}\label{AppenDec}
        \norm{f}_{L^p(B^d(R))} \lesssim C \left( \sum_{a \in S} \norm{f_a}_{L^p(\eta_R)}^2 \right)^{1/2}
    \end{equation}
    where $\eta_R(x) = \eta(x/R)$ and $C$ is the constant in \eqref{AppenDR}. Then \eqref{AppenDec} implies that
    \begin{equation}\label{AppenDec2}
        \norm{f}_{L^p(\bR^d)} \lesssim C \left( \sum_{a \in S} \norm{f_a}_{L^p(\bR^d)}^2 \right)^{1/2}.
    \end{equation}
    For the proof of \eqref{AppenDec2} from \eqref{AppenDec}, see \cite[Proposition 9.15]{Dem20} and \cite[Chapter 4.1]{Zli19}. Thus, Lemma \ref{DIequiv} easily follows.
    
    Now we prove \eqref{AppenDec}. We have
    \begin{equation}\label{A_LpBR}
    \begin{split}
        \norm{f}_{L^p(B^d(R))} &\leq \norm{f\eta_R}_{L^p(B^d(R))}\\
        &= \norm{\sum_{a \in S} \int \widehat{f}_a\ast \widehat{\eta}_R(\xi) e(-x\cdot \xi) d\xi  }_{L^p(B^d(R))}.
    \end{split}
    \end{equation}
    The function $\widehat{f}_a\ast \widehat{\eta}_R$ is supported on a cube of side length $O(R^{-1})$ centered at $a/R$. Therefore, by change of variables $\xi = R^{-1} \zeta$ and $R^{-1}x = z$, \eqref{A_LpBR} is
    \begin{equation*}
        \begin{split}
            &\leq R^{-d+d/p} \sum_{|c| \lesssim 1} \int_{[0,1]^d} \norm{ \sum_{a\in S} \widehat{f}_a \ast \widehat{\eta}_R (R^{-1}(\zeta+a+c)) e(-z\cdot(\zeta+a+c))}_{L^p(B^d(1))} d\zeta\\
            &\lesssim R^{-d+d/p} \sum_{|c| \lesssim 1} \int_{[0,1]^d} \norm{ \sum_{a\in S} \widehat{f}_a \ast \widehat{\eta}_R (R^{-1}(\zeta+a+c)) e(-z\cdot a)}_{L^p(B^d(1))} d\zeta.
        \end{split}
    \end{equation*}
    By \eqref{AppenDR}, we obtain that
    \begin{equation*}
        \begin{split}
            \norm{f}_{L^p(B^d(R))}&\lesssim  C R^{-d+d/p}\sum_{|c| \lesssim 1} \int_{[0,1]^d} \left(\sum_{a\in S} |\widehat{f}_a\ast \widehat{\eta}_R(R^{-1}(\zeta+a+c))|^2\right)^{1/2} d\zeta\\
            & \lesssim C R^{-d+d/p} \sum_{|c| \lesssim 1} \left(  \sum_{a\in S} \int_{[0,1]^d} |\widehat{f}_a\ast \widehat{\eta}_R(R^{-1}(\zeta+a+c))|^2 d\zeta \right)^{1/2}\\
            & \lesssim C R^{-d/2+d/p} \left( \sum_{a \in S} \norm{\widehat{f}_a \ast \widehat{\eta}_R(\xi)}_{L^2(\bR^d)}^2 \right)^{1/2}\\
            &\leq C R^{-d/2 +d/p} \left( \sum_{a \in S} \norm{f_a \eta_R}_{L^2(\bR^d)}^2 \right)^{1/2}\\
            &\leq C R^{-d/2 +d/p}\left( \sum_{a \in S} \norm{f_a}_{L^p(\eta_R)}^2 \norm{\eta_R}_{L^1(\bR^d)}^{1-2/p} \right)^{1/2}.
        \end{split}
    \end{equation*}
    Since $\norm{\eta_R}_{L^1} \approx R^d$, this completes the proof of \eqref{AppenDec}.

\end{proof}

\end{document}